\documentclass[11pt,a4paper,reqno]{amsart}
\usepackage[T1]{fontenc}
\usepackage{lmodern,amsmath,amssymb,amsthm,mathtools,microtype}
\usepackage[margin=27mm]{geometry}
\usepackage{needspace,etoolbox,enumitem,xcolor}
\usepackage[colorlinks=true,linkcolor=blue!50!black,citecolor=blue!50!black,urlcolor=blue!50!black]{hyperref}
\pretocmd{\section}{\Needspace{10\baselineskip}}{}{}
\pretocmd{\subsection}{\Needspace{5\baselineskip}}{}{}
\newtheorem{theorem}{Theorem}[section]
\newtheorem{lemma}[theorem]{Lemma}
\newtheorem{proposition}[theorem]{Proposition}
\newtheorem{corollary}[theorem]{Corollary}
\theoremstyle{remark}
\newtheorem{remark}[theorem]{Remark}
\newcommand{\R}{\mathbb R}
\newcommand{\C}{\mathbb C}
\newcommand{\N}{\mathbb N}
\newcommand{\SO}{\mathrm{SO}}
\newcommand{\Gr}{\mathrm{Gr}}
\newcommand{\dd}{\,\mathrm d}
\newcommand{\Lip}{\operatorname{Lip}}
\newcommand{\supp}{\operatorname{supp}}
\newcommand{\dimH}{\dim_{\mathrm H}}
\newcommand{\HH}{\mathcal H}
\newcommand{\LL}{\mathcal L}
\newcommand{\Pp}{\mathbb P}
\newcommand{\E}{\mathbb E}
\newcommand{\op}{\mathrm{op}}
\newcommand{\Frob}{\mathrm F}
\newcommand{\ind}{\mathbf 1}
\newcommand{\diam}{\operatorname{diam}}

\newcommand{\SSC}{\hyperref[ass:SSC]{\textup{(SSC)}}}
\newcommand{\DRC}{\hyperref[ass:DRC]{\textup{(DRC)}}}

\numberwithin{equation}{section}
\setlist[enumerate]{itemsep=4pt,topsep=6pt}
\title[Projections and fibres of self-similar sets and measures]{Absolute continuity of projections and dimension conservation for self-similar sets and measures}
\author{Xiong Jin}
\address{Department of Mathematics, The University of Manchester,
Oxford Road, Manchester M13 9PL, United Kingdom}
\email{xiong.jin@manchester.ac.uk}
\author{Tuomas Sahlsten}
\address{Department of Mathematics and Statistics, University of Helsinki,
P.O. Box 68, FI-00014 University of Helsinki, Finland}
\email{tuomas.sahlsten@helsinki.fi}
\subjclass[2020]{28A80, 28A78, 60B15}
\keywords{Self-similar measures, orthogonal projections, dimension conservation,
random walks on compact groups, absolute continuity, Fourier decay}
\hypersetup{pdftitle={Absolute continuity of projections and dimension conservation for self-similar sets and measures},
pdfauthor={Xiong Jin and Tuomas Sahlsten}}

\thanks{T.S. acknowledges support from the Research Council of Finland's Academy Research Fellowship \emph{``Quantum chaos of large and many body systems''}, grant Nos. 347365, 353738. Part of this research was carried out during T.S.'s visit to the programme at Institut Mittag-Leffler \emph{Interactions between Fractal Geometry, Harmonic Analysis and Dynamical Systems} (Autumn 2026).}

\begin{document}
\begin{abstract}
Let $A\subset\R^d$, $d\geq3$, be the attractor of a self-similar IFS whose rotations generate a dense subgroup of $\SO(d)$. For every $1\leq k<\dimH A$ and $V\in\Gr(d,k)$, we prove that $\pi_VA$ has positive $k$-dimensional Lebesgue measure uniformly in $V$, and $\dimH(A\cap\pi_V^{-1}\{y\})=\dimH A-k$ for Lebesgue-almost every $y\in\pi_VA$. Moreover, if $\mu$ is a self-similar measure for such an IFS with $I_t(\mu)<\infty$ for some $k<t<d$, then every $\mu_V=(\pi_V)_*\mu$ is equivalent to the $k$-dimensional Lebesgue measure restricted to $\pi_VA$, with density $f_V$ satisfying $\sup_V\int_{\{f_V>M\}}f_V\,d\mathcal L_V^k\lesssim e^{-c(\log M)^{1/3}}$. Under strong separation, the conditional measures on the fibres are exact dimensional of dimension $\dimH\mu-k$. The proof uses self-similarity to iterate $L^1$ norms of differences of the Gaussian-smoothing of $\mu_V$ between scales. Unlike $L^q$, $q>1$, the $L^1$ norm does not grow under rescaling, allowing Varj\'u's $L^2$ weak gap theorem under dense support condition to be used, and avoiding to use the much stronger uniform spectral gap assumption.
\end{abstract}
\maketitle

\section{Introduction}\label{sec:intro}

\subsection{Background}

The Marstrand-Mattila projection theorem \cite{Marstrand54,Mattila75} states that for any $1\leq k<d$ and $A\subset\R^d$ the orthogonal projection $\pi_V A$ on $V\in\Gr(d,k)$ satisfies
\[
    \dimH \pi_V A=\min\{k,\dimH A\}
    \quad\text{for almost every }V\in\Gr(d,k).
\]
Moreover, when $\dimH A>k$, the $k$-dimensional Lebesgue measure $\mathcal L_V^k (\pi_V A) > 0$ for almost every $V$. Here $\Gr(d,k)$ is the Grassmannian of $k$-dimensional linear subspaces of $\R^d$, $\pi_V$ the orthogonal projection onto $V$ and ``almost every'' refers to the rotation-invariant probability measure $\sigma_k$ on $\Gr(d,k)$. The corresponding result for Borel measures $\mu$ on $\R^d$ states that finite $t$-energy $I_t(\mu)$ for $t > k$ implies $L^2$ densities for almost every $(\pi_V)_* \mu$.

There has been considerable interest over the past decades in understanding and reducing the exceptional set in such projection theorems. One direction is to ask whether projection theorems continue to hold when $V$ is restricted to a smaller family in $\Gr(d,k)$ \cite{FasslerOrponen2014,Orponen2015,OrponenVenieri2020,Harris2023,KaenmakiOrponenVenieri2025,GanGuoWang2024,GanGuthMaldague2024,GanGuoGuthHarrisMaldagueWang2026}. These questions are closely related to recent developments such as the proof of the Furstenberg set conjecture in the plane \cite{OrponenShmerkin2023,ShmerkinWang2025,OrponenShmerkin2026,RenWang2023}, the Kakeya conjecture in $\R^3$ \cite{WangZahl2025}, an effective version of the Oppenheim conjecture \cite{LindenstraussMohammadiWangYang2023}, and recent applications to fractal uncertainty principles and spectral gaps \cite{CohenDyatlov26}. Another approach is to exploit additional structure of the set $A$ or measure $\mu$ itself, such as Ahlfors-David regularity \cite{Wu25} or dynamical structure such as self-similarity \cite{HochmanShmerkin12,FalconerJin14,ShmerkinSolomyak16,Rapaport20,
AlgomShmerkin25}. For self-similar sets and measures, stronger projection theorems are known under suitable assumptions on the rotations of the IFS. Peres and Shmerkin~\cite{PeresShmerkin09} proved that
$\dimH \pi_V A=\min\{1,\dimH A\}$ for every $V\in\Gr(2,1)$ when $A\subset\R^2$ is self-similar and the IFS contains an irrational rotation. Later, in $\mathbb{R}^d$ for $d\ge 2$, Hochman and Shmerkin~\cite{HochmanShmerkin12} considered self-similar measures $\mu$ under the strong separation condition and dense rotation conditon, they proved
$\dimH\pi_V\mu=\min\{k,\dimH\mu\}$ for every $V\in\Gr(d,k)$. Further results in this direction were obtained by Falconer and the first author~\cite{FalconerJin14}, and more recently by Algom and Shmerkin~\cite{AlgomShmerkin25}.

Absolute continuity and the dimensions of the fibres are not yet fully understood and the entropy methods such as in \cite{HochmanShmerkin12} do not apply. In this setting Shmerkin and Solomyak
\cite[Theorem~B]{ShmerkinSolomyak16} proved that self-similar measures of dimension greater than one, generated by planar homogeneous IFSs
(i.e. with a common linear part) satisfying strong separation, have
absolutely continuous projections outside a set of directions of
Hausdorff dimension zero. For general sets in $\R^3$, Gan, Guo, Guth, Harris, Maldague and Wang \cite{GanGuoGuthHarrisMaldagueWang2026} proved that if $\dimH A>2$, then  $\mathcal L_V^2(\pi_VA)>0$ for almost every $V$ in a non-degenerate one-parameter family of planes.
The fibre question is naturally expressed through \textit{dimension
conservation} (introduced by Furstenberg, see e.g. \cite{Furstenberg08}), which relates the dimension of a set to those of its
projection and fibres, and has a measure-theoretic counterpart
involving conditional measures. Rapaport \cite{Rapaport17} showed, in $\mathbb{R}^2$, that singular projections and failure of dimension conservation can occur even with dense rotations. In subsequent work \cite{Rapaport20}, Rapaport established absolute continuity and dimension conservation in every direction for a parametrised family of homogeneous self-similar measures in $\mathbb{R}^2$, outside an exceptional set of parameters of Hausdorff dimension zero.

A different approach to absolute continuity uses random walks on groups. Here the work of Lindenstrauss and Varj\'u \cite{LindenstraussVarju16} used
spectral-gap estimates for transfer operators associated to the random walk to prove smoothness of self-similar
measures in $\mathbb{R}^3$ in a certain parameter range. See also \cite{Kittle2024,KittleKogler24}. Moreover, in $\mathbb{R}^d$ for $d\ge 3$, a recent work of Algom, Rodriguez-Hertz and Wang
\cite{AlgomRodriguezHertzWang26} implies criteria for absolute
continuity and Sobolev or Besov regularity of projections in terms of
quantitative mixing of the rotation action. Obtaining a spectral gap
from dense generation alone is a difficult open problem. Bourgain and
Gamburd \cite{BourgainGamburd08,BourgainGamburd12} proved spectral
gaps for dense subgroups of $\mathrm{SU}(d)$ generated by algebraic
elements, and Benoist and de Saxc\'e \cite{BenoistdeSaxce16}
extended this to arbitrary connected compact simple Lie groups.
No corresponding theorem is known for general dense generators making it challenging to tackle also the absolute continuity questions.
We give more detailed comparisons with previous work in
Section~\ref{sec:comparison}.

In this paper we introduce an approach for absolute continuity of projections and dimension conservation for fibres that does not require any spectral gap or quantitative mixing condition from the rotation group. We prove a theorem that applies to every projection
of a self-similar set in $\R^d$, $d\ge3$, whose rotations generate
a dense subgroup of $\SO(d)$. If the attractor has dimension $s>k$,
then its $k$-dimensional projections have a uniform positive
lower bound on their volumes, and every projection has fibres
of dimension $s-k$ at Lebesgue-almost every point of its image.
This theorem allows inhomogeneous contraction ratios and arbitrary overlaps. This follows from a quantitative absolute-continuity
theorem for projections of self-similar measures. We obtain this theorem by combining Varj\'u's $L^2$ estimate for transfer operators for the random walk on rotations with a Gaussian smoothing of martingale difference type sequences
in $L^1$. This bounding is inspired by \cite{LindenstraussVarju16,Sahlsten25} but is completely new. We hope this approach will be interesting approach to future questions about absolute continuity of fractal measures, see Section \ref{sec:prospects} for discussion.

\subsection{Main results}

We begin with the theorem for self-similar sets, and then state the
measure theorem used in its proof. The latter also implies quantitative
bounds for the projected densities. Let $d\ge3$, and let $A = \bigcup_{i = 1}^m S_i A$ be the attractor of a finite
iterated function system (IFS) of similarities
\begin{equation}\label{eq:ifs}
 S_i(x)=r_iO_i x+a_i,\qquad
 0<r_i<1,\quad O_i\in\SO(d),\quad 1\le i\le m.
\end{equation}
We say the IFS satsfies
\begin{description}[leftmargin=4em,labelwidth=3.5em,labelsep=.5em]
\item[(DRC)]\phantomsection\label{ass:DRC}
\textbf{Dense rotation condition} if the rotations generate a dense subgroup of the full rotation group:
\[
 \overline{\langle O_1,\ldots,O_m\rangle}=\SO(d).
\]
\item[(SSC)]\phantomsection\label{ass:SSC}
\textbf{Strong separation condition} if $S_iA$, $1\le i\le m$, are pairwise disjoint.
\end{description}
The system is \emph{equicontractive} if all the ratios $r_i$
equal a common value $\rho$. We write $\LL_V^k$ for Lebesgue
measure on $V$. For a set $A$ with $s=\dimH A>k$, we say the set $A$ is \textit{dimension conserving} for projection $\pi_V$ if
\begin{equation}\label{eq:dc-set}
 \dimH\{y\in\pi_VA:\dimH(A\cap\pi_V^{-1}\{y\})\ge s-k\}=k.
\end{equation}

Our first theorem requires only \DRC: the contraction ratios may
differ and we can have any kind of overlaps:

\begin{theorem}\label{thm:sets}
Let $A\subset\R^d$, $d\ge3$, be the attractor of the
IFS \eqref{eq:ifs} satisfying \DRC.
Write $s=\dimH A$, and let $1\le k<d$ be an integer
with $s>k$. Then we have
\[
 \inf_{V\in\Gr(d,k)}\LL_V^k(\pi_VA)>0.
\]
Moreover, for every $V\in\Gr(d,k)$,
\[
 \dimH(A\cap\pi_V^{-1}\{y\})=s-k
 \quad\text{for }\LL_V^k\text{-almost every }y\in\pi_VA.
\]
In particular, every such projection is dimension conserving
in the set sense of \eqref{eq:dc-set}.
\end{theorem}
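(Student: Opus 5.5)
We deduce Theorem~\ref{thm:sets} from the measure theorem announced in the abstract. By that theorem, if $\mu$ is a self-similar measure for a \DRC\ IFS with $I_t(\mu)<\infty$ for some $k<t<d$, then each $\mu_V$ is equivalent to $\LL_V^k|_{\pi_VA}$. Its density $f_V$ satisfies $\sup_V\int_{\{f_V>M\}}f_V\,d\LL_V^k\lesssim e^{-c(\log M)^{1/3}}$.

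The first step is to produce such a measure on a sub-attractor. Since $s>k$, we pass to a sub-system. Choose $n$ large and a subfamily $\mathcal W$ of words of length $n$ whose maps $S_w$ have pairwise disjoint images. Their attractor $A'\subset A$ should satisfy \SSC\ and $\dimH A'>t>k$; this is the standard approximation of $s$ by strongly separated sub-IFS (Peres--Shmerkin/Orponen type). We must also keep \DRC. The rotations $O_w$, $w\in\mathcal W$, need not generate a dense subgroup. To fix this, we add finitely many words whose rotations generate a dense subgroup: take words of the form $w\,u$ for a few fixed $u$, still separated. Alternatively, we can use that $\{O_w:|w|=n\}$ generates a finite-index, hence dense, subgroup in connected $\SO(d)$. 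Controlling separation while adding these words is the fiddly part. Let $\mu$ be the natural self-similar measure on $A'$ with weights $p_w=r_w^{\dim A'}$. Under \SSC\ it is Ahlfors regular of dimension $\dim A'>t$, so $I_t(\mu)<\infty$.

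The second step gives the volume bound. Write $f_V$ for the density of $\mu_V$. Choose $M$ with $\sup_V\int_{\{f_V>M\}}f_V\le 1/2$. Then
\[
\tfrac12\le\int_{\{f_V\le M\}}f_V\le M\,\LL_V^k(\pi_VA'),
\]
so $\inf_V\LL_V^k(\pi_VA)\ge 1/(2M)>0$.

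The third step handles the fibres. The upper bound $\dimH(A\cap\pi_V^{-1}y)\le s-k$ for $\LL^k_V$-a.e.\ $y$ is Marstrand's slicing inequality, since $\int^*\HH^{s-k+\epsilon}(A\cap\pi_V^{-1}y)\,dy\lesssim\HH^{s+\epsilon}(A)=0$. For the lower bound, we apply the measure theorem's fibre statement to $\mu$ on $A'$. Under \SSC\ the conditional measures $\mu_{V,y}$ are exact dimensional of dimension $\dim A'-k$ for $\mu_V$-a.e.\ $y$. This gives $\dimH(A'\cap\pi_V^{-1}y)\ge\dim A'-k$ for $\mu_V$-a.e.\ $y$, hence for $\LL_V^k$-a.e.\ $y\in\pi_VA'$ by equivalence.

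It remains to cover almost all of $\pi_VA$ rather than just $\pi_VA'$. Take a sequence of sub-IFS $A'_j$ with $\dim A'_j\uparrow s$. Then use self-similarity: $A\supset S_w A'_j$ for all words $w$, and $\pi_V S_w A'_j$ is a translate of a scaled copy of $\pi_{V_w}A'_j$ with $V_w=O_w^{-1}V$. By the uniform lower bound on densities of the translated projections, the sets $\bigcup_w\pi_VS_wA'_j$ should have full $\LL^k$-measure in $\pi_VA$. We argue this by Lebesgue density: at a density point of $\pi_VA$ at scale $r_w$, the uniform positive volume of $\pi_{V_w}A'_j$ forces a definite proportion of coverage. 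Iterating gives full measure. This density/covering argument and the construction of separated dense sub-IFS are where we expect the main obstacles.
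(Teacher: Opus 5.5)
Your proposal is essentially the paper's proof: natural measures on strongly separated \DRC\ subsystems, the tail bound for the uniform volume bound, and Mattila's slicing inequality for the upper bound. The lower bound likewise uses exact dimensionality of the conditional measures plus equivalence with Lebesgue measure on each $\pi_{V_w}A'_j$, then the Lebesgue density argument of Lemma~\ref{lem:density} to pass to almost all of $\pi_VA$, and finally $\dimH A'_j\uparrow s$. That density step is routine: every $y\in\pi_VA$ lies in sets $\pi_VS_wA\subset B_V(y,2D_Ar_w)$ with $r_w$ arbitrarily small, each containing $\pi_VS_wA'_j$ of volume at least $a\,r_w^k$. The one ingredient you leave open, a subsystem with both \SSC\ and \DRC\ and dimension close to $s$, is exactly \cite[Proposition~1.8]{Farkas16}, which the paper simply cites; your finite-index remark shows only that the rotations of \emph{all} words of length $n$ generate a dense subgroup, not those of the separated subfamily $\mathcal W$, so it does not supply this by itself.
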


Theorem~\ref{thm:sets} is follows from the following absolute continuity theorem for self-similar measures $\mu=\sum_{i=1}^m p_i(S_i)_*\mu$, $p_i>0$, $\sum_{i=1}^m p_i=1$. Let us introduce some notation first. For $V\in\Gr(d,k)$, denote the projected measure $\mu_V:=(\pi_V)_*\mu$ and let $\mu_{V,y}$ be the conditional measure on the fibre based at $y$ (see Section~\ref{subsec:disintegration}). We say that the projection $\pi_V$ is \textit{dimension conserving} (for measures) if
\begin{equation}\label{eq:dc-measure}
 \dimH\mu=\dimH\mu_V+\dimH\mu_{V,y}
 \quad\text{for }\mu_V\text{-almost every }y.
\end{equation}
We will use Gaussian smoothing to measure the regularity of $\mu_V$.
Let $\Gamma_h^V$ be the normalized Gaussian kernel on $V$,
with variance parameter $h^2$:
$\Gamma_h^V(y)
 =(2\pi h^2)^{-k/2}e^{-|y|^2/(2h^2)}$, $y\in V$, $h>0.$ For $0<t<d$, the $t$-\textit{energy} of a Borel measure
$\nu$ is $I_t(\nu):=\iint |x-z|^{-t}\dd\nu(x)\dd\nu(z)$.

\begin{theorem}\label{thm:energy}
Let $d\ge3$ and $1\le k<d$ be integers. Consider the IFS
\eqref{eq:ifs}, with attractor $A$, and assume \DRC.
Let $\mu$ be a self-similar measure associated to the IFS with positive weights. Suppose $I_{t_0}(\mu)<\infty$ for some $k<t_0<d$.
Then every $\mu_V$ has a density $f_V$, and there are constants
$C,c>0$ such that, for $j\ge1$ and $M\ge1$,
\begin{align}
 \sup_{V\in\Gr(d,k)}\|f_V-\mu_V*\Gamma_{2^{-j}}^V\|_1
 &\le C e^{-c j^{1/3}},\label{eq:main-rate}\\
 \sup_{V\in\Gr(d,k)}\int_{\{f_V>M\}}f_V\dd\LL_V^k
 &\le C e^{-c(\log M)^{1/3}}.\label{eq:main-tail}
\end{align}
Furthermore,
\[
 \mu_V\sim\LL_V^k|_{\pi_V A} \text{ and }
 \inf_{V\in\Gr(d,k)}\LL_V^k(\pi_V A)>0.
\]
If the IFS also satisfies \SSC, then, for every $V\in\Gr(d,k)$,
the conditional measures $\mu_{V,y}$ are exact dimensional of
dimension
\[
 \frac{h(p)}{\chi(p)}-k
 =\dimH\mu-k
\]
for $\mu_V$-almost every $y$, and hence for
$\LL_V^k$-almost every $y\in\pi_VA$, where
\[
 h(p)=-\sum_i p_i\log p_i,\qquad
 \chi(p)=-\sum_i p_i\log r_i.
\]
In this case, every projection is dimension conserving in the
measure sense of \eqref{eq:dc-measure}.
\end{theorem}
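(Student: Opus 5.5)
The plan is to study the Gaussian smoothings $F_j^V:=\mu_V*\Gamma_{2^{-j}}^V$ and show that, uniformly in $V$, the sequence $(F_j^V)_j$ is Cauchy in $L^1(\LL_V^k)$ at a rate giving \eqref{eq:main-rate}. Write $\Delta_j(V):=\|F_{j+1}^V-F_j^V\|_1$ and $\Delta_j:=\sup_V\Delta_j(V)$.

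\textbf{Step 1: self-similarity recursion.} Iterate the relation $\mu=\sum_i p_i(S_i)_*\mu$ along a stopping time $n$. This expresses $\mu_V$ as an average over words $w$ of translated and rescaled copies of $\mu_{O_w^{-1}V}$ at scale $r_w$. Since $L^1$ norms of differences are invariant under rescaling and translation, we get
\[
\Delta_j(V)\le \E_w\,\bigl\|\mu_{O_w^{-1}V}*(\Gamma_{2^{-j}/r_w}-\Gamma_{2^{-j-1}/r_w})\bigr\|_1 .
\]
This is the key feature noted in the abstract: for $q>1$ an $L^q$ norm would pick up a factor $r_w^{-k(1-1/q)}$ here. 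The recursion alone only gives $\Delta_j(V)\lesssim\E_w\Delta_{j-\log r_w^{-1}}(O_w^{-1}V)$, which does not contract. So we must exploit cancellation from averaging over the rotations $O_w$ acting on $V$.

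\textbf{Step 2: the rotation average.} Write the difference kernel at scale $2^{-j}$ as $D_j=\Gamma_{2^{-j}}-\Gamma_{2^{-j-1}}$. In Fourier space the relevant quantity is
\[
\E_w\int_V |\widehat{\mu}(\xi)|^2|\widehat{D}(r_w\xi)|^2\dd\xi, \qquad \xi\in O_w^{-1}V .
\]
Rewrite this as a pairing of $|\widehat\mu|^2$ restricted to spheres $|\xi|\sim 2^j$ with the random walk on $\SO(d)$ applied to the indicator of the Grassmannian directions. Varj\'u's $L^2$ weak gap theorem under \DRC{} then says that $n$ steps of the transfer operator contract $L^2(\SO(d))$ functions oscillating at frequency $\le R$ by a factor $1-c(\log R)^{-A}$ per step; equivalently, there is decay after $n\sim(\log R)^{A}$ steps.

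Combine this with the energy bound: $I_{t_0}(\mu)<\infty$ with $t_0>k$ gives $\int_{|\xi|\sim 2^j}|\widehat\mu|^2\lesssim 2^{j(d-t_0)}$. Averaging over rotations, this yields a spherical-average bound for $k$-planes of order $2^{j(k-t_0)}$, with an error term governed by the weak gap. Then pass from $L^2$ back to $L^1$ using Cauchy--Schwarz on the effective support, which has bounded size because $\mu$ is compactly supported. The result should be an inequality of the form
\[
\Delta_j\le (1-\epsilon_j)\max_{j'\in[j-n,j]}\Delta_{j'}+2^{-cj},\qquad \epsilon_j\gtrsim j^{-2/3}.
\]
Iterating this gives $\sum_{j'\ge j}\Delta_{j'}\lesssim e^{-cj^{1/3}}$; the exponent $1/3$ comes from balancing the step count $n$, the gap $\epsilon$ and the scale $j$.

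\textbf{Step 3: consequences.} The Cauchy property gives the density $f_V$ with \eqref{eq:main-rate}. Since $\|F_j^V\|_\infty\lesssim 2^{jk}$, choosing $j\approx\log M$ gives
\[
\int_{\{f_V>M\}}f_V\le \|f_V-F_j\|_1+\int_{\{F_j>M/2\}}F_j ,
\]
which yields \eqref{eq:main-tail}. Uniform integrability of the family $\{f_V\}$ (from \eqref{eq:main-tail}) forces $\LL^k_V(\{f_V>0\})\ge\delta>0$ uniformly in $V$, hence $\inf_V\LL^k_V(\pi_VA)>0$.

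For the equivalence $\mu_V\sim\LL^k_V|_{\pi_VA}$, apply the same argument to the cylinder measures $(S_w)_*\mu$. These project to rescaled copies of $\mu_{O_w^{-1}V}$, whose densities are positive on sets of relative measure at least $\delta$ inside each $\pi_VS_wA$. A Lebesgue density argument then shows that $\{f_V=0\}\cap\pi_VA$ is null.

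\textbf{Fibres and Theorem~\ref{thm:sets}.} Under \SSC, $\mu$ is exact dimensional with dimension $h/\chi$. Exact dimensionality of $\mu_{V,y}$ with dimension $h/\chi-k$ follows from the standard identity comparing $\mu(B(x,r))$ with $\mu_V(B(\pi_Vx,r))\,\mu_{V,y}(B(x,r))$. The bounds used are the upper bound $\mu_V(B(y,r))\lesssim r^k$ at density points of the $L^1$ density, and Lebesgue differentiation. Theorem~\ref{thm:sets} then follows by taking $\mu$ with weights $p_i=r_i^{s'}$ on a strongly separated sub-IFS (built from iterates) whose dimension $s'$ exceeds $k$ and is arbitrarily close to $s$, while keeping \DRC. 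Such $\mu$ has $I_t(\mu)<\infty$ for all $t<s'$.

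\textbf{Main obstacle.} The hardest step is Step 2. One has to convert Varj\'u's weak gap, which is only logarithmic at frequency $R$, into an $L^1$ contraction for the differences $\Delta_j$ across a window of scales that is uniform in $V$. The non-homogeneous ratios $r_w$ must be handled by stopping times so that all scales $r_w$ are comparable.
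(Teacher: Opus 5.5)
Your overall architecture (no-loss $L^1$ rescaling, Varj\'u's weak gap, the energy bound for the $\sigma_k$-average, uniform integrability plus a density-point argument, Farkas subsystems) has the right ingredients. However, Step~2, which carries the whole theorem, is not worked out, and the mechanism you postulate cannot come from Varj\'u's theorem. That theorem bounds $\|Q_\eta\varphi\|_2$ on $\SO(d)$, so it gives no per-step contraction of $\sup_V\Delta_j(V)$. A recursion $\Delta_j\le(1-\epsilon_j)\max_{j'}\Delta_{j'}+2^{-cj}$ is therefore neither available nor needed. The missing idea is a \emph{pointwise} mixing estimate with additive error,
\[
 \|P^nF-\sigma_k(F)\|_\infty\le Ce^{-cn^{1/3}}\bigl(\|F\|_\infty+\Lip(F)\bigr).
\]
The paper obtains it in three moves:
\begin{enumerate}
\item It iterates the weak gap only while the $L^2$ norm of the iterate exceeds $\varepsilon$, so the normalised function has $\|\cdot\|_\infty+\Lip\le\varepsilon^{-1}$.
\item It optimises $\varepsilon$.
\item It upgrades $L^2$ to $L^\infty$ via $\|H\|_\infty\lesssim\|H\|_2^{2/(\dim G+2)}$ when $\Lip(H)\le1$.
\end{enumerate}
This estimate is applied once, to the single observable $U_R(V)=\|\mu_V*(\Gamma^V_{1/R}-\Gamma^V_{2/R})\|_1$, which has $\Lip(U_R)\lesssim R$ and $\sigma_k(U_R)\lesssim R^{-\alpha}$ with $\alpha=(t-k)/2$. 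Combined with $U_B(V)\le P^nU_{B\rho^n}(V)$ (equal ratios $\rho$), it gives $U_B(V)\lesssim R^{-\alpha}+Re^{-cn^{1/3}}$ for $R=B\rho^n$, and choosing $\log R\sim(\log B)^{1/3}$ finishes. Note also that \DRC{} does not give Varj\'u's hypothesis for $\kappa$ itself: for $m=2$, $\supp(\widetilde\kappa*\kappa)$ generates a cyclic group. One must pass to a convolution power $\kappa^{*b}$, as in Lemma~\ref{lem:block}.

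For unequal ratios, a stopping time that makes the $r_w$ comparable is not enough. The sum $\sum_{w\ \mathrm{stopped}}p_wU_{Br_w}(O_w^{-1}V)$ is not $P^n$ applied to one function, since the scale and word length vary with $w$ and the stopping rule is correlated with the rotations. The paper handles this as follows:
\begin{enumerate}
\item It conditions on the count vectors of length-$L$ blocks. Equal counts give equal ratios, so $R$ is fixed given the types and the block rotations are independent.
\item It uses Lemma~\ref{lem:balanced} to find a count vector $\mathbf D_*$ whose rotations satisfy Varj\'u's hypothesis.
\item A Chernoff bound gives $\gtrsim\log B$ blocks of type $\mathbf D_*$ before stopping.
\item It proves mixing for products $P_{\mathbf D_1}\cdots P_{\mathbf D_n}$.
\end{enumerate}

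Finally, the fibre claim does not follow from comparing $\mu(B(x,r))$ with $\mu_V(B(\pi_Vx,r))\,\mu_{V,\pi_Vx}(B(x,r))$. No such identity holds, and a projection with bounded density does not force dimension conservation: a measure on the graph of a suitable Borel function can have dimension $>k$ and Lebesgue projection, yet Dirac conditional measures. Under \SSC{} the paper instead uses the exact formula $\mu_{V,y}(A_w)=p_wr_w^{-k}f_{V_w}(z_w(y))/f_V(y)$. It then proves $\log f_{V_n}(\pi_{V_n}X_n)=o(n)$ almost surely along a random coding, from the uniform tail bound \eqref{eq:main-tail}, the bounded volume of $\pi_{V_w}A$, and Borel--Cantelli. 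The cylinder--ball comparison then gives dimension $h(p)/\chi(p)-k$.
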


We do not require any arithmetic condition on the contraction ratios. For the proof, we first prove the density estimates for equalcontraction IFS in
Section~\ref{sec:smoothing}, then extend them to arbitrary contractions in
Section~\ref{sec:unequal}. The conditional-measure and equivalence
arguments in Sections~\ref{sec:conditional} and~\ref{sec:equivalence}
complete the proof of Theorem~\ref{thm:energy}.

The finite-energy hypothesis is normally stronger than $\dimH\mu>k$. In Theorem~\ref{thm:energy}, strong separation is used only to formulate the dimension of the conditional measures. Under \SSC, one can use the entropy formula $\dimH\mu =\frac{h(p)}{\chi(p)}$ to deduce the fibre dimension $\dimH\mu-k$. Without \SSC, self-similar measures are still exact dimensional expressed in terms of projection entropy \cite{FengHu09}, but the dimension of the fibre measures are not clear due to potential overlaps in the fibres.

Both theorems hold for every $V\in\Gr(d,k)$, although the exceptional
sets of base points may depend on $V$. The approximation and tail
bounds in Theorem~\ref{thm:energy} are uniform in $V$, with constants
that may depend on the IFS, the weights, the energy exponent and
energy bound, $d$, and $k$. The tail bound implies a uniform positive
lower bound for projection volumes, while equivalence with restricted
Lebesgue measure allows statements holding $\mu_V$-almost everywhere
to be moved to Lebesgue-almost every point of $\pi_VA$.

The tail estimate \eqref{eq:main-tail} also provides uniform logarithmic
integrability of the projected densities:

\begin{corollary}\label{cor:bound}
Under the hypotheses of Theorem~\ref{thm:energy}, the projected densities
$f_V$ are uniformly integrable. Moveover, for every $q>0$,
\[
 \sup_{V\in\Gr(d,k)}
 \int_V f_V(\log^+f_V)^q\dd\LL_V^k<\infty,
\]
where $\log^+u=\max\{\log u,0\}$ for $u>0$, with
$\log^+0=0$.
\end{corollary}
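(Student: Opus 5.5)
The corollary follows from the tail bound \eqref{eq:main-tail} of Theorem~\ref{thm:energy} by a layer-cake argument; no new input from the dynamics is needed. Write
\[
 T(M):=\sup_{V\in\Gr(d,k)}\int_{\{f_V>M\}}f_V\dd\LL_V^k\le Ce^{-c(\log M)^{1/3}},
 \qquad M\ge1.
\]
Since $T(M)\to0$ as $M\to\infty$ and every $f_V$ is a probability density, we get $\sup_V\int_{\{f_V>M\}}f_V\to0$. This is exactly uniform integrability of the family $\{f_V\}$; the uniform $L^1$ bound is trivial because $\|f_V\|_1=1$.

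For the logarithmic moment, fix $q>0$ and decompose dyadically in the size of $\log f_V$. The set $\{f_V\le e\}$ contributes at most $e$. On the shell $E_n=\{e^{n}<f_V\le e^{n+1}\}$, $n\ge1$, we have $(\log^+f_V)^q\le(n+1)^q$, so
\[
 \int_V f_V(\log^+f_V)^q\dd\LL_V^k\le e+\sum_{n\ge1}(n+1)^q\int_{\{f_V>e^n\}}f_V\dd\LL_V^k
 \le e+C\sum_{n\ge1}(n+1)^q e^{-cn^{1/3}}.
\]
The bound is uniform in $V$. The series converges, since $e^{-cn^{1/3}}$ decays faster than any power of $n$: for instance $(n+1)^qe^{-cn^{1/3}}\le C_q n^{-2}$ once $c n^{1/3}\ge (q+2)\log(n+1)+O(1)$. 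Equivalently, we could use the layer-cake formula with the function $\phi(u)=(\log^+u)^q$ and integrate $\phi'(M)\,T(M)$ against $dM$, substituting $M=e^{s}$.

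The only point needing care is measurability and the handling of $\log^+0=0$, which is immaterial here. One should also note that the constant depends only on $q$ and the constants $C,c$ of Theorem~\ref{thm:energy}, so the supremum over $V$ is finite. There is no real obstacle. The only observation needed is that the stretched-exponential decay $e^{-c(\log M)^{1/3}}$ beats every power of $\log M$. It does not beat any power of $M$, so this argument gives no $L^p$ bound for $p>1$.
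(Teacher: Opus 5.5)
Your proof is correct and is essentially the paper's argument. The paper writes $(\log^+f_V)^q=\int_0^{\log^+f_V}qu^{q-1}\,du$ and uses Fubini to get $q\int_0^\infty u^{q-1}\mu_V\{f_V>e^u\}\,du\le Cq\int_0^\infty u^{q-1}e^{-cu^{1/3}}\,du$ via \eqref{eq:main-tail}; your shell decomposition is the discrete version of that computation, and both read uniform integrability off the same uniform tail bound.
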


The case $q=1$ of Corollary~\ref{cor:bound} also implies the following uniform entropy bound. Fix $D_A\ge1$ such that $A\subset B(0,D_A)$.
After identifying each $V$ isometrically with $\R^k$, let
$J=[-(D_A+1),D_A+1)^k$, and let $\mathcal D_n(J)$ be its partition into
$2^{kn}$ dyadic cubes of equal volume.
Let $\lambda_J=\LL^k|_J/|J|$.
For a probability measure $\nu$ supported on $J$, define
\[
 H(\nu,\mathcal D_n(J))
 =-\sum_{D\in\mathcal D_n(J)}\nu(D)\log\nu(D).
\]
We use the convention $0\log0=0$. We then have:

\begin{corollary}\label{cor:entropy}
Under the hypotheses of Theorem~\ref{thm:energy}, there exists a constant $C$ such that
\begin{equation}\label{eq:entropy}
 0\le kn\log2-H(\mu_V,\mathcal D_n(J))\le C
\end{equation}
for every $V\in\Gr(d,k)$ and integer $n\ge0$.
\end{corollary}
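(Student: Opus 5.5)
The lower bound $kn\log2-H(\mu_V,\mathcal D_n(J))\ge 0$ is the standard maximal-entropy bound: $\mu_V$ is supported in $\pi_V A\subset B(0,D_A)\cap V\subset J$, so it is a probability measure on the $2^{kn}$ atoms of $\mathcal D_n(J)$, and its entropy is at most $\log 2^{kn}$. For the upper bound I will compare $H(\mu_V,\mathcal D_n(J))$ with the differential entropy of $f_V$. Writing $|D|=|J|2^{-kn}$ for $D\in\mathcal D_n(J)$ and $\mu_V(D)=\int_D f_V$, the gap is a relative entropy:
\[
 kn\log2-H(\mu_V,\mathcal D_n(J))
 =\sum_{D}\mu_V(D)\log\frac{\mu_V(D)}{\lambda_J(D)}
 =H(\mu_V\,|\,\lambda_J)\text{ restricted to }\sigma(\mathcal D_n(J)).
\]
Relative entropy does not increase when we pass to a coarser $\sigma$-algebra (data processing, i.e.\ Jensen applied to the convex function $u\log u$ on each atom). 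Hence this quantity is bounded by the full relative entropy
\[
 \int_J \frac{f_V}{|J|^{-1}}\log\frac{f_V}{|J|^{-1}}\dd\lambda_J
 =\int_V f_V\log f_V\dd\LL_V^k+\log|J|.
\]

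It then remains to bound $\int_V f_V\log f_V$ uniformly in $V$. I split it into the parts where $f_V\ge 1$ and where $f_V<1$. The negative part is harmless, since it only lowers the integral and we want an upper bound. For the positive part, $\int f_V\log^+ f_V\le\sup_V\int f_V\log^+f_V\dd\LL_V^k<\infty$ by Corollary~\ref{cor:bound} with $q=1$. The constant is therefore $C=\log|J|+\sup_V\int f_V\log^+f_V$, independent of $n$ and $V$. Here $|J|=(2D_A+2)^k$ is fixed, because the identification of $V$ with $\R^k$ is isometric and $\mathcal D_n(J)$ pulls back to a partition of a fixed cube in $V$.

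The only point needing care is the Jensen/conditioning step. On each cube $D$, convexity of $\phi(u)=u\log u$ gives
\[
 \phi\Bigl(\tfrac{1}{|D|}\int_D f_V\Bigr)|D|\le\int_D\phi(f_V),
\]
and this is exactly $\mu_V(D)\log(\mu_V(D)/|D|)\le\int_D f_V\log f_V$. This requires $f_V\log f_V$ to be integrable, which holds: the positive part is finite by Corollary~\ref{cor:bound}, and the negative part is bounded below by $-e^{-1}|J|$ on $J$. Summing over $D$ gives $kn\log 2-H\le\log|J|+\int f_V\log f_V$ directly. I expect no real obstacle here; all the analytic difficulty is already contained in the tail estimate \eqref{eq:main-tail}.
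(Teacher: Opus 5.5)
Your proposal is correct and follows the paper's proof essentially verbatim. The paper also identifies $kn\log 2-H(\mu_V,\mathcal D_n(J))$ with $\int_J Z_{V,n}\log Z_{V,n}\dd\lambda_J$ for the cube-averaged relative density, bounds it by convexity of $u\log u$ by $\log|J|+\int_V f_V\log f_V\dd\LL_V^k$, and concludes with Corollary~\ref{cor:bound} for $q=1$. Your lower bound via the maximal-entropy inequality is equivalent to the paper's nonnegativity of that integral.
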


We note that the restriction $d\ge3$ is necessary for the theorems discussed above in
this generality. When $d = 2$, Rapaport \cite[Theorem~1.1]{Rapaport17} constructed
a planar homogeneous IFS with strong separation and an
irrational rotation whose uniform self-similar measure has dimension
greater than one. On a dense $G_\delta$ set of directions, its
projections are singular, its typical conditional measures are
discrete, and dimension conservation fails
\cite[Corollary~1.2]{Rapaport17}. Thus the planar analogue of
Theorem~\ref{thm:energy} is false. One may need more than dense rotation condition for planar self-similar measures. Questions about extensions of
the results are discussed in Section~\ref{sec:prospects}.

\subsection{Previous work and proof strategy}\label{sec:comparison}

We compare our results with earlier work on dimension conservation
and absolute continuity, and then explain the main steps of the proof.
Related results and questions for self-affine and stationary measures
are discussed in Section~\ref{sec:prospects}.

\medskip
\noindent\textit{Dimension conservation and fibres.}
Furstenberg \cite[Theorem~6.2]{Furstenberg08} established dimension
conservation for homogeneous fractals, including strongly separated
self-similar sets with no rotations. Falconer and the first author
\cite[Corollary~1.1]{FalconerJin14} proved the corresponding measure
statement in the finite-rotation setting without a separation
condition. For dense rotations, Rapaport
\cite[Theorem~3.1 and Corollary~3.2]{Rapaport20} proved that planar
homogeneous self-similar measures with strong separation and dimension
greater than one have projection densities in some $L^q$, $q>1$,
in every direction, provided the common rotation is irrational and
the contraction-rotation parameter avoids a set of Hausdorff dimension
zero. He also obtained exact dimensionality of the conditional
measures and equivalence of the projections with restricted Lebesgue
measure. The theorem excludes a zero-dimensional parameter set,
but does not provide an effective criterion for verifying that a
prescribed parameter lies outside it.
In Sections~\ref{sec:conditional} and~\ref{sec:equivalence},
we adapt the arguments of \cite[Sections~5 and~7]{Rapaport20}
to prove exact dimensionality of the conditional measures and
equivalence with Lebesgue measure restricted to the projected
attractor in every projection.

For sets, Falconer and the first author
\cite[Theorem~4.8]{FalconerJin15} obtained lower bounds
$s-1-\varepsilon$ for fibre dimensions on a positive-length set
of image points, outside a zero-dimensional exceptional set of
directions. Their result applies to planar self-similar sets of
dimension $s>1$ with dense rotations and the open set condition,
for every $0<\varepsilon<s-1$.
For planar homogeneous self-similar sets with an irrational rotation, Wu \cite[Theorem~1.6]{Wu19} and Shmerkin
\cite[Corollary~8.3]{Shmerkin19} proved for the upper box dimension
\[
 \overline{\dim}_{\mathrm B}(A\cap\ell)
 \le \max\{\dimH A-1,0\}
 \quad\text{for every affine line }\ell,
\]
assuming strong separation and the open set condition. Shmerkin asked for complementary lower bounds for many parallel
fibres in Remark~(b) following that corollary.
Theorem~\ref{thm:sets} implies the exact dimension $s-k$ at
Lebesgue-almost every point of every projected image in dimensions
$d\ge3$ under \DRC, allowing unequal ratios and overlaps, answering this question of Shmerkin when $d \geq 3$. However, it does not settle the $d = 2$ case.

Under strong separation, dense rotation and $\dimH\mu>k$,
Hochman and Shmerkin's projection theorem
\cite[Theorem~1.6]{HochmanShmerkin12} implies an $o(n)$ difference from maximal entropy
for each fixed projection. Under the hypotheses of
Theorem~\ref{thm:energy}, Corollary~\ref{cor:entropy} implies an $O(1)$
difference, uniformly over projection planes and scales.

\medskip
\noindent\textit{Absolute continuity and random walks.}
Lindenstrauss and Varj\'u \cite{LindenstraussVarju16} used
spectral-gap estimates for random Euclidean isometries to prove
smoothness of certain self-similar measures. The corresponding
rotation-averaging operators also appear in proving Fourier decay for
self-similar measures, see \cite[Section~4.1]{Sahlsten25}.
There, self-similarity relates a Fourier transform of $\mu$ at a large
frequency to its values at rotated directions and a smaller
frequency. Rotation estimates then compare this directional
average with the average over the rotation group allowing to use $L^2$ contraction of the associated transfer operator.

We are inspired by this strategy but crucially replace the Fourier transform of $\mu$ by
the $L^1$ difference between successive Gaussian smoothings
of a projected measure $\mu_V$. These smoothing differences play the
role of \textit{martingale differences}. The important point is that the self-similarity in our argument causes
no growing terms in the $L^1$ norm. This allows Varj\'u's $L^2$
estimate \cite[Corollary~7]{Varju13} for random walk operator to be used udner \DRC\ alone. This is the main novelty of our paper.

The closest comparison for projections is the recent work of Algom,
Rodriguez-Hertz and Wang \cite{AlgomRodriguezHertzWang26}.
Their Theorems~1.2 and~1.9 give criteria for absolute continuity
and Sobolev or Besov regularity in terms of quantitative mixing
and dimension. Their paper allows rotation orbits smaller than the full Grassmannian and uses Littlewood-Paley pieces in $L^q$, $q>1$.
In the equicontractive $\R^3$ setting, their absolute-continuity
criterion compares a spectral gap with the dimension of the self-similar measure and its contraction ratio. Our theorem assumes the full
$\SO(d)$ rotation group, but requires only
dense generation and finite energy above the target dimension.
We obtain quantitative $L^1$ control rather than the positive
Sobolev or Besov regularity that follow under their stronger
quantitative hypotheses.

\medskip
\noindent\textit{Proof strategy.}
Suppose that $\mu$ satisfies the hypotheses of
Theorem~\ref{thm:energy}. For clarity let us first assume equicontractive case $r_i = \rho$ for all $i = 1,\dots,m$. For $V\in\Gr(d,k)$, recall
$\mu_V=(\pi_V)_*\mu$, and that $\Gamma_h^V$ is
the normalized Gaussian on $V$ at scale $h$. Consider the averaging
operator $P$ acting on functions $F:\Gr(d,k)\to\C$ by
\[
 (PF)(V)=\sum_i p_iF(O_i^{-1}V), \quad V \in \Gr(d,k).
\]
For each $R\ge1$, define the following function
$U_R:\Gr(d,k)\to[0,\infty)$ by
\[
U_R(V):=\left\|\mu_V*(\Gamma_{1/R}^V-\Gamma_{2/R}^V)\right\|_{L^1(V)}=\int_V\left|\int_V\bigl(\Gamma_{1/R}^V(x-y)-\Gamma_{2/R}^V(x-y)\bigr)\,d\mu_V(y)\right|\,d\mathcal L^k_V(x).
\]
Thus $U_R(V)$ measures the change in smoothing of $\mu_V$ from scale $2/R$ to scale $1/R$.

For $k<t<\min\{t_0,k+2\}$, finite energy and Fourier averaging give
\[
 \int_{\Gr(d,k)}U_R(V)\dd\sigma_k(V)\le CR^{-(t-k)/2}.
\]
Gaussian translation estimates also give
$\Lip(U_R)\le CR$ and $\|U_R\|_\infty\le2$.
We would then like to use this bound for the average to be valid for \textit{every} $V \in \Gr(d,k)$.

Fixing $V \in \Gr(d,k)$, self-similarity implies the crucial bound
\begin{align}\label{eq:crucialbound}
    U_B(V)\le P^nU_{B\rho^n}(V)
\end{align}
whenever $B\rho^n\ge1$. This is analogous to the bound for Fourier transform $|\widehat{\mu}(\xi)|$ in
\cite{LindenstraussVarju16,Sahlsten25}. Here instead we apply self-similarity
directly to $U_R(V)$.
Under a branch with contraction $\rho^n$, the density factor
$\rho^{-kn}$ cancels the Jacobian $\rho^{kn}$ in the
$L^1$ norm. Thus there is therefore no growing factor in \eqref{eq:crucialbound}. By contrast, an $L^q$ norm with $q>1$ would
introduce the factor $\rho^{-nk(1-1/q)}$ in \eqref{eq:crucialbound}.

In Section~\ref{sec:mix}, we show that \DRC\ implies the
support condition of Varj\'u's Corollary~7 in \cite{Varju13} after passing to
a fixed convolution power. Iterating the corollary implies
$L^2$ decay and the Lipschitz bound then implies pointwise mixing on the rotation planes: there exist constants $C,c>0$ independent of $F$ and $n$ such that
\[
 \|P^nF-\sigma_k(F)\|_\infty
 \le Ce^{-cn^{1/3}}
       \bigl(\|F\|_\infty+\Lip(F)\bigr),
 \qquad
 \sigma_k(F)=\int_{\Gr(d,k)}F\dd\sigma_k.
\]
Combining this with \eqref{eq:crucialbound} for $F = U_{B\rho^n}$ and the estimates for $U_R$,
where $R=B\rho^n\ge1$, implies
\[
 \sup_VU_B(V)
 \le CR^{-(t-k)/2}+CRe^{-cn^{1/3}}.
\]
We may choose $n$ so that $R$ is of order
$\exp(a(\log B)^{1/3})$, with $a>0$ sufficiently small, to balance the two terms on the right. Then both terms' decay would be bounded by the same speed, hence
\[
 \sup_VU_B(V)\le C'e^{-c'(\log B)^{1/3}}
\]
for some constants $C',c'>0$. These bounds are summable over dyadic scales, so using standard analytic arguments the Gaussian
smoothings converge in $L^1$ to densities $f_V$, uniformly
over $V$. The same estimates give the approximation rate
\eqref{eq:main-rate} and the density-tail bound
\eqref{eq:main-tail}.

For general contraction ratios $r_i$, we consider words of a fixed length and separate them according to their symbol counts. Words with the same symbol counts have exactly the same contraction ratio. By Lemmas~\ref{lem:balanced} and~\ref{lem:block}, we can choose a count class (good block) for which the conditional rotations satisfy the support assumption in Varj\'u's corollary. We apply the mixing estimate each time such a good block appears. The rotation averages coming from the blocks in between these good blocks do not affect the bound since the operator is contracting. We continue until, by a stopping time, the remaining scale is of order $\exp(a(\log B)^{1/3})$, which implies the same smoothing estimate as in the equicontractive case described above. Since we only condition on the symbol counts and then sum over the count classes, the original weights of the measure are unchanged.

We finish the proof using the uniform bounds on the projected densities, following arguments of Rapaport~\cite{Rapaport20}. Uniform integrability and self-similarity show that each projected measure is equivalent to Lebesgue measure on the projected attractor. Under strong separation, the density-tail estimate and the formula for conditional measures on cylinders give exact dimensionality of the conditional measures for arbitrary positive weights.

For the set theorem, we use Farkas's approximation by strongly separated subsystems whose rotations form a dense subgroup, see \cite[Proposition~1.8]{Farkas16}. We apply the theorem to the natural self-similar measures on these subsystems and transfer the resulting fibre bounds to the original set.

\begin{remark}
After completing this work, we learned of independent work by Algom,
Rodriguez-Hertz and Wang \cite{AlgomRodriguezHertzWang26b}
on absolute continuity of projections of nonlinear self-conformal
measures in $\R^2$. Their proof uses $L^1$ norms of Littlewood-Paley
pieces but instead of Varj\'u's $L^2$ estimates, which would only work in dimension $d \geq 3$, they apply effective renewal-theoretic arguments. Together, \cite{AlgomRodriguezHertzWang26b} and our work suggest further avenue for
applications to absolute continuity questions,
for instance for stationary measures or dynamically defined spectral
measures, see Section~\ref{sec:prospects} for further discussion.
\end{remark}

\subsection*{Structure of the paper}

Section~\ref{sec:prelim} introduces the notation, symbolic coding,
and disintegration of measures.
Section~\ref{sec:mix} proves the needed mixing estimates using \cite{Varju13}. Section~\ref{sec:smoothing} proves absolute
continuity and the smoothing and density-tail bounds for finite-energy
self-similar measures with equal contraction ratios.
Section~\ref{sec:unequal} extends these results to arbitrary contraction ratios.
Section~\ref{sec:entropy} proves the Corollary~\ref{cor:bound} and Corollary~\ref{cor:entropy}. 
Section~\ref{sec:conditional} proves exact dimensionality of the
conditional measures and dimension conservation under \SSC.
Section~\ref{sec:equivalence} proves equivalence with restricted
Lebesgue measure and the uniform positive projection-volume bound,
completing the proof of Theorem~\ref{thm:energy}.
Section~\ref{sec:generalsets} uses natural self-similar measures on strongly
separated subsystems to prove Theorem~\ref{thm:sets}. Section~\ref{sec:prospects} discusses extensions and open questions.

\subsection*{AI declaration}
AI tools were only used to assist with checking correctness, notations and cross-referencing. The ideas and mathematical arguments are those of the authors.
The authors wrote and checked the manuscript and take full responsibility for its content.

\section{Preliminaries}\label{sec:prelim}
\subsection{Notation and symbolic coding}\label{subsec:notation}
We use the notation of the IFS \eqref{eq:ifs}, its attractor
$A$, and the self-similar measure $\mu$ with positive
weights $p_i$. The assumptions \SSC\ and \DRC\ are assumed only
where stated. We denote by $\nu\ll\eta$ for absolute continuity and
$\nu\sim\eta$ when mutula absolute-continuity hold.
For a Borel probability measure $\nu$, we use the definition
\[
 \dimH\nu=\inf\{\dimH E:E\text{ is Borel and }\nu(E)=1\}
\]
for the (upper) Hausdorff dimension of $\mu$. For an exact-dimensional probability measure, this equals its almost-sure local dimension and its lower Hausdorff dimension. See \cite{Mattila95}. Constants denoted by $C,c$ may
vary between occurrences. They never depend on the projection
plane or the running scale.

Let $\N=\{1,2,\ldots\}$ and $\N_0=\N\cup\{0\}$. Let $\mathcal I=\{1,\ldots,m\}$,
$\Omega=\mathcal I^{\N}$, and $\Pp=p^{\N}$, where the probability vector
$p=(p_1,\ldots,p_m)$ has positive coordinates.
For $\omega\in\Omega$ define the coding map
\[
 \Pi(\omega)=\lim_{n\to\infty}
    S_{\omega_1}\circ\cdots\circ S_{\omega_n}(0).
\]
Its image under $\Omega$ is the self-similar set $A$, and the self-similar measure takes the form $\mu=\Pi_*\Pp$. Since $p_i$ are all positve, every open neighbourhood of any point of $A$ contains a sufficiently small cylinder of positive mass, hence $\supp\mu=A$.

For $w=i_1\cdots i_n\in\mathcal I^n$ we will use the following notations:
\[
 \begin{aligned}
  S_w&=S_{i_1}\circ\cdots\circ S_{i_n}
      =r_wO_w(\cdot)+a_w,\qquad O_w=O_{i_1}\cdots O_{i_n},\\
  r_w&=\prod_{j=1}^n r_{i_j},\qquad
  p_w=\prod_{j=1}^n p_{i_j},\qquad A_w=S_wA.
 \end{aligned}
\]
Let $\sigma:\Omega\to\Omega$ be the left-shift operator,
$\sigma(\omega_1,\omega_2,\ldots)=(\omega_2,\omega_3,\ldots)$.
The coding map satisfies
$\Pi(\omega)=S_{\omega|n}(\Pi(\sigma^n\omega))$.
Under \SSC, every point of $A$ has a unique coding,
and
\begin{equation}\label{eq:restriction}
 \mu|_{A_w}=p_w(S_w)_*\mu,\qquad \mu(A_w)=p_w.
\end{equation}
Without \SSC\ one still has the inequality $\mu\ge p_w(S_w)_*\mu$, which follows by iterating the self-similarity identity $\mu=\sum_i p_i(S_i)_*\mu$.

\subsection{Disintegration and conditional probability measures}\label{subsec:disintegration}
For a fixed $V\in\Gr(d,k)$, by the disintegration theorem (see \cite{Kallenberg21} for example) one can find a Borel set $B_0\subset V$ with $\mu_V(B_0)=1$ and a Borel probability kernel
$y\mapsto\mu_{V,y}$ defined on $B_0$, such that
\begin{equation}\label{eq:disintegration}
 \mu(E)=\int_{B_0}\mu_{V,y}(E)\dd\mu_V(y),\qquad
 \mu_{V,y}(A\cap\pi_V^{-1}\{y\})=1\quad(y\in B_0)
\end{equation}
for every Borel $E\subset\R^d$. We leave the conditional measures
undefined on $V\setminus B_0$. Integrals involving the conditional
measures are understood to be taken over $B_0$.
Existence and uniqueness up to a $\mu_V$-null set also follow from the disintegration theorem. Whenever a full-$\mu$-measure set is disintegrated, it has full $\mu_{V,y}$-measure for $\mu_V$-almost every $y$, by
\eqref{eq:disintegration}.
Whenever the projected measures have densities, the Borel carrying set
can be chosen simultaneously for a countable family of planes.

\section{Mixing on the group and the Grassmannian}\label{sec:mix}

We will establish the mixing estimate used in
Section~\ref{sec:smoothing}. We first work with functions on the
rotation group, then pass to functions of projection planes.

\subsection{Averaging operator and contraction estimates}
Assume \DRC, and let $G=\SO(d)$, with the Haar probability
measure $\vartheta$. Note that $G$ is compact, connected,
and semisimple for $d\ge3$. We use the bi-invariant Frobenius metric
\[
 \delta(g,h)=\|g-h\|_{\Frob},
 \qquad
 \|M\|_{\Frob}^2=\sum_{i,j=1}^d M_{ij}^2.
\]
All $L^q$ norms on $G$ are taken with respect to $\vartheta$.
For a Lipschitz function $F:G\to\C$, denote by
\[
 \Lip(F)=\sup_{g\ne h}
 \frac{|F(g)-F(h)|}{\delta(g,h)}
\]
its Lipschitz constant. For a probability vector $p$ with postive entries define the rotation law $\kappa$ associated with the rotaions $\{O_1,\cdots,O_m\}$ and its averaging operator $Q$ by
\[
 \kappa=\sum_i p_i\delta_{O_i},
 \qquad
 (QF)(g)=\int_G F(h^{-1}g)\dd\kappa(h)
       =\sum_i p_iF(O_i^{-1}g).
\]
Denote by $\vartheta(F)=\int_G F\dd\vartheta$.
The operator $Q$ preserves constants and satisfies
\begin{equation}\label{eq:contr}
     \vartheta(QF)=\vartheta(F),\qquad
 \|QF\|_q\le\|F\|_q\quad(q=2,\infty),\qquad
 \Lip(QF)\le\Lip(F).
\end{equation}
These properties follow from the Haar invariance, Jensen's inequality,
and the fact that left translations are isometries.
In particular, iteration on $Q$ preserves the mean-zero condition and
does not increase $\|F\|_\infty+\Lip(F)$, which we will use in the proof of Proposition~\ref{prop:mix}.

For probability measures $\eta_1,\eta_2$ on $G$, the convolution of $\eta_1$ and $\eta_2$ is defined as
\[
 \int_G F(u)\dd(\eta_1*\eta_2)(u)
 =\iint_{G\times G}F(gh)\dd\eta_1(g)\dd\eta_2(h).
\]
Thus $\kappa^{*b}$, that is $\kappa$ convoluted to itself $b-1$ times, is the law of a product of $b$ independent random rotations with law $\kappa$.

\subsection{Varj\'u's \texorpdfstring{$L^2$}{L2} weak gap theorem}

We first present Varj\'u's $L^2$ weak gap theorem under the dense subgroup condition. Then we show that it applies to $\kappa^{*b}$ of a fixed length $b$.

For a probability measure $\eta$ on $G$, let
$\widetilde\eta$ be its image under inversion, and recall its average operator
\[
 Q_\eta F(g)=\int_G F(h^{-1}g)\dd\eta(h).
\]
In particular, $Q_{\kappa^{*b}}=Q^b$.
The following is \cite[Corollary~7]{Varju13}, with the
logarithmic exponent replaced by its upper bound $2$.
The bi-invariant Riemannian metric used there is comparable
with our Frobenius metric, so this changes only the constant.

\begin{theorem}\label{thm:varju}
Let $\eta$ be a probability measure on $G$ such that
$\supp(\widetilde\eta*\eta)$ generates a dense subgroup of $G$.
There is a constant $c_\eta>0$ such that every Lipschitz
$\varphi:G\to\C$ with $\vartheta(\varphi)=0$ and
$\|\varphi\|_2=1$ satisfies
\begin{equation}\label{eq:varju}
 \|Q_\eta\varphi\|_2
 \le 1-\frac{c_\eta}
 {\log^2\!\bigl(2+\|\varphi\|_\infty+\Lip(\varphi)\bigr)}.
\end{equation}
\end{theorem}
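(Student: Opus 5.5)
The plan is to reduce Theorem~\ref{thm:varju} directly to \cite[Corollary~7]{Varju13}, checking that three changes in the formulation affect only the constant $c_\eta$: the choice of metric, the logarithmic exponent, and the normalisation of the Lipschitz quantity.

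\emph{Step 1 (the quoted estimate).} I will restate Varj\'u's Corollary~7 in his notation. For $G$ a compact connected semisimple Lie group, here $\SO(d)$ with $d\ge3$, and $\eta$ with $\supp(\widetilde\eta*\eta)$ generating a dense subgroup, it gives $\|Q_\eta\varphi\|_2\le 1-c'_\eta/\log^{A}(2+\mathcal N(\varphi))$ for every mean-zero Lipschitz $\varphi$ with $\|\varphi\|_2=1$. Here $A\le 2$ is an exponent determined by the group, and $\mathcal N(\varphi)$ is $\|\varphi\|_\infty+\Lip_{\mathrm{R}}(\varphi)$ or a comparable expression. The Lipschitz constant $\Lip_{\mathrm R}$ is taken with respect to a bi-invariant Riemannian metric $\delta_{\mathrm R}$. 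I must also check that his $Q_\eta$, whether acting by left or right translation and whether by $\eta$ or $\widetilde\eta$, matches ours. If it does not, I will pass to $\widetilde\eta$ or compose with inversion $g\mapsto g^{-1}$. Inversion preserves $\vartheta$, $L^2$ norms, sup norms and Lipschitz constants for bi-invariant metrics, and the hypothesis on $\supp(\widetilde\eta*\eta)$ is symmetric enough for this; for instance, $\supp(\eta*\widetilde\eta)$ is a conjugate-type variant, and density is preserved.

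\emph{Step 2 (metric comparison).} The Frobenius metric $\delta$ is bi-invariant, since left and right multiplication by orthogonal matrices preserve $\|\cdot\|_{\Frob}$. It is the chordal metric of the embedding $\SO(d)\subset\R^{d\times d}$. On a compact connected group, the chordal and intrinsic Riemannian distances are comparable: $\delta\le\delta_{\mathrm R}\le C\delta$ for a bi-invariant Riemannian metric. Local comparability comes from the embedding being a smooth embedding, and global comparability from compactness. Hence $\Lip_{\mathrm R}(\varphi)\le\Lip(\varphi)$, possibly up to a constant $C$, and so
\[
\log(2+\|\varphi\|_\infty+\Lip_{\mathrm R}(\varphi))\le \log(2+C\|\varphi\|_\infty+C\Lip(\varphi))\le C'\log(2+\|\varphi\|_\infty+\Lip(\varphi)),
\]
using $\log(2+Cx)\le \log C'' + \log(2+x)\le C'\log(2+x)$, which holds because $\log(2+x)\ge\log 2>0$.

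\emph{Step 3 (the exponent).} With $u=\log(2+\|\varphi\|_\infty+\Lip(\varphi))\ge\log2$ and $A\le2$, we have $u^{A}\le(\log 2)^{A-2}u^2$. Combining this with Step~2 gives $c'_\eta/\log^A(\cdots)\ge c_\eta/\log^2(\cdots)$ for a suitable $c_\eta>0$, which is \eqref{eq:varju}.

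I expect the main obstacle to be Step~1: matching Varj\'u's precise normalisation of the operator and of the regularity quantity to ours. The remaining steps are routine comparisons.
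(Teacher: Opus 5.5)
Your proposal is correct and follows the paper's own route: the paper obtains Theorem~\ref{thm:varju} by quoting \cite[Corollary~7]{Varju13}, bounding the logarithmic exponent above by $2$, and replacing the bi-invariant Riemannian metric by the comparable Frobenius metric, which changes only $c_\eta$. Your additional checks are sound and make explicit what the paper leaves implicit: the left/right and inversion conventions (the support hypothesis transfers because $\langle g^{-1}h\rangle$ and $\langle gh^{-1}\rangle$ are conjugate by any fixed $g_0\in\supp\eta$), and the elementary inequalities $\log(2+Cx)\le C'\log(2+x)$ and $u^A\le(\log 2)^{A-2}u^2$ for $u\ge\log 2$.
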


The dense subgroup condition is not necessary to hold for $\kappa$ itself:
dense generation by the rotations does not imply dense generation
by their pairwise differences. The next lemma shows that we can instead use the finite product law of independent rotations with law $\kappa$.

\begin{lemma}\label{lem:block}
Assume \DRC. Then there is an integer $b\ge1$, depending only
on the rotations $O_1,\ldots,O_m$, such that
\[
 \overline{\left\langle
 \supp\bigl(\widetilde{\kappa^{*b}}*\kappa^{*b}\bigr)
 \right\rangle}=G.
\]
\end{lemma}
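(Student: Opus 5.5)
The plan is to work with the subgroups generated by the differences at each length and use an algebraic argument. For $b\ge1$ put
\[
 \Sigma_b=\supp\bigl(\widetilde{\kappa^{*b}}*\kappa^{*b}\bigr)=\{O_w^{-1}O_{w'}:\ w,w'\in\mathcal I^b\},
\]
since every $p_i>0$. Let $\Gamma_b=\langle\Sigma_b\rangle$ and $H_b=\overline{\Gamma_b}$. For any $i\in\mathcal I$ we have $O_{iw}^{-1}O_{iw'}=O_w^{-1}O_{w'}$, so $\Sigma_b\subset\Sigma_{b+1}$. Hence the $H_b$ form an increasing chain of closed subgroups of $G$. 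The argument has two steps. First we show that $\bigcup_b\Gamma_b$ is dense in $G$. Then we show that some single $H_b$ already equals $G$.

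\emph{Step 1: density of the union.} Let $H=\overline{\bigcup_b\Gamma_b}$. Each $O_i$ normalises $H$, because
\[
 O_i^{-1}\bigl(O_w^{-1}O_{w'}\bigr)O_i=O_{wi}^{-1}O_{w'i}\in\Sigma_{b+1}.
\]
Also $O_i^{-1}$ normalises $H$: the conjugation maps $H$ into itself, and a closed subgroup mapped into itself by conjugation in a compact group is mapped onto itself. Since $H$ is closed, it is therefore normalised by $\overline{\langle O_1,\dots,O_m\rangle}=G$ under \DRC, so $H$ is normal. Moreover $O_i^{-1}O_1\in\Sigma_1\subset H$, so every $O_i$ lies in the coset $O_1H$. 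Consequently $G/H$ is a compact Lie group topologically generated by the single element $O_1H$, and is therefore abelian. For $d\ge3$ the group $\SO(d)$ is connected and semisimple, hence perfect, so its only closed normal subgroup with abelian quotient is $G$ itself. Thus $H=G$.

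\emph{Step 2: a single $b$ suffices.} Let $K_b$ be the identity component of $H_b$. The dimensions of the $K_b$ are nondecreasing, so they stabilise. Since connected closed subgroups of equal dimension contained in one another coincide, $K_b=K$ for all $b\ge b_0$. For $b\le b'$ with $b'\ge b_0$, the group $H_b\subset H_{b'}$ normalises the identity component $K_{b'}=K$. Hence $\bigcup_b H_b\subset N_G(K)$, and since $N_G(K)$ is closed, Step 1 gives $N_G(K)=G$, so $K$ is a connected normal subgroup.

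If $K=G$ we are done with $b=b_0$. Otherwise $K$ is a proper connected normal subgroup. For $d\ne4$ this forces $K=\{e\}$, and then $G'=G$. For $d=4$, $K$ may also be one of the two simple $\SU(2)$-type normal factors, and then $G'=G/K\cong\SO(3)$. In either case, the images of the $H_b$ in $G'$ have trivial identity component, so they are finite subgroups. They increase with $b$ and, by Step 1, their union is dense in $G'$. By Jordan's theorem on finite linear groups, each of these finite groups has an abelian normal subgroup of index at most $J=J(d)$. The main obstacle is to turn this into a contradiction carefully. The plan is as follows:
\begin{enumerate}
\item Pass to a subsequence along which the abelian subgroups $A_b$ increase; this is possible after taking, for instance, the subgroup generated by the $J!$-th powers, which is characteristic and hence compatible with the inclusions.
\item Then $\overline{\bigcup A_b}$ is a closed abelian subgroup of finite index at most $J$ in the dense closed group $G'$.
\item A closed subgroup of finite index is open, so it contains the identity component of the connected nonabelian group $G'$, which is all of $G'$. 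This makes $G'$ abelian, a contradiction.
\end{enumerate}
Hence $K=G$, and $b=b_0$ works. The integer $b$ depends only on $O_1,\dots,O_m$, because $\Sigma_b$ does not depend on the weights.
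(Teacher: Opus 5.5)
\textbf{Comparison with the paper.} Your Step 1 is the paper's argument: normality of $H=\overline{\bigcup_b\Gamma_b}$, the quotient $G/H$ topologically generated by the image of $O_1$, and perfectness of $\SO(d)$. To get a single $b$, the paper applies Breuillard--Gelander: a dense subgroup of a connected perfect Lie group contains a finitely generated dense subgroup, and finitely many generators of $\bigcup_b\Gamma_b$ already lie in one $\Gamma_b$. You instead stabilise the identity components $K_b$, show the limit $K$ is normal, and reduce to an increasing chain of finite groups $F_b$ (the images of the $H_b$ in $G'=G/K$) with dense union in a connected nonabelian group. All of that is sound.

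\textbf{The gap.} The step you flag as the main obstacle fails as written. The group $F_b^{(J!)}=\langle x^{J!}:x\in F_b\rangle$ is abelian, since every $J!$-th power lies in a Jordan subgroup of index at most $J$, and it increases with $b$. But its index in $F_b$ is not at most $J$. For example, if $F_b$ is abelian of exponent dividing $J!$, then $F_b^{(J!)}$ is trivial. So item (2) is unjustified. Without a uniform index bound, the coset argument that produces an open subgroup of $G'$ does not go through.

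\textbf{Repair.} Put $A_\infty=\bigcup_bF_b^{(J!)}$. The set $\{x\in G':x^{J!}\in\overline{A_\infty}\}$ is closed and contains the dense set $\bigcup_bF_b$, so it equals $G'$. Every element of the compact connected group $G'$ lies in a maximal torus and is therefore a $J!$-th power. Hence $G'\subset\overline{A_\infty}$ would be abelian, a contradiction. Two other fixes also work:
\begin{enumerate}
\item Bound $[F_b:F_b^{(J!)}]\le J\,(J!)^{d}$. This uses that finite abelian subgroups of $G'$ (which is $\SO(d)$ or $\SO(3)$) are simultaneously diagonalisable over $\C$, hence generated by at most $d$ elements.
\item Keep Jordan's subgroups, and apply a K\"onig-type compactness argument to the finite sets of abelian subgroups of index at most $J$ in each $F_b$, using the maps $B\mapsto B\cap F_b$, to obtain a nested family. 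A subsequence alone does not make arbitrary choices of Jordan subgroups nested.
\end{enumerate}

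\textbf{What each route buys.} With this fix your route is correct and more self-contained than the paper's. It uses only the structure theory of compact Lie groups and Jordan's theorem instead of Breuillard--Gelander. It also proves a slightly more general fact: any increasing chain of closed subgroups of $\SO(d)$, $d\ge3$, with dense union must eventually equal $\SO(d)$. The paper's route is shorter, given the cited corollary.
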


\begin{proof}
We will use the following result of Breuillard-Gelander
\cite[Corollary~2.5]{BreuillardGelander03}: every dense subgroup of a
connected real Lie group $\mathbf{g}$ satisfying \textit{perfectness}, i.e. the commutator group $\overline{[\mathbf{g},\mathbf{g}]}=\mathbf{g}$,
contains a finitely generated dense subgroup. We will construct
an increasing sequence of subgroups with dense union, then apply this result to
obtain a finite collection of dense generators. Define
\[
 \Gamma_n=\langle O_u^{-1}O_v:u,v\in\mathcal I^n\rangle,\qquad
 \Gamma_\infty=\bigcup_{n\geq1}\Gamma_n,\qquad
 H=\overline{\Gamma_\infty}.
\]
Note that, by the positivity of the weights in $\kappa$, the support
\[
 \operatorname{supp}\bigl(\widetilde{\kappa^{*n}}*\kappa^{*n}\bigr)
 =\Gamma_n.
\]
For $i\in\mathcal I$ and $u,v\in\mathcal I^n$ we have
\[
 O_{iu}^{-1}O_{iv}=O_u^{-1}O_v,\qquad
 O_{ui}^{-1}O_{vi}=O_i^{-1}(O_u^{-1}O_v)O_i.
\]
This implies that $\Gamma_n\subset\Gamma_{n+1}$ and
$O_i^{-1}\Gamma_n O_i\subset\Gamma_{n+1}$. Thus $\Gamma_\infty$ is a
subgroup of $G$ and $O_i^{-1}HO_i\subset H$ holds for every index $i\in\mathcal I$.

Fix $i\in\mathcal I$. By compactness one can find a subsequence $n_j$ such that $O_i^{n_j}\to a$ for some $a\in G$. We can then choose the sequence so that $n_{j+1}-n_j\to\infty$, and for $q_j:=n_{j+1}-n_j$ we have $O_i^{q_j}=O^{n_{j+1}}O^{-n_j}\to e$ . Iterate $r$ times $O_i^{-1}HO_i\subset H$ we get $O_i^{-r}HO_i^r\subset H$ for every $r\geq1$.
Since $O_i^{-(q_j-1)}\to O_i$ and $H$ is closed, we have
$O_iHO_i^{-1}\subset H$. Together with $O_i^{-1}HO_i\subset H$ we obtain $O_iHO_i^{-1}=H$. The same equality holds for every element of
$\langle O_1,\ldots,O_m\rangle$, and hence, by \DRC\ and continuity,
for every element of $G$. Therefore $H$ is normal in $G$. For
$i,j\in\mathcal I$, we have $O_i^{-1}O_j\in\Gamma_1\subset H$, so all
the rotations $O_i$ have the same image in $G/H$. This quotient therefore
has a dense cyclic subgroup and is abelian, thus $[G,G]\subset H$. Since $G=\SO(d)$ is perfect for $d\ge 3$, one gets $H=G$.

Now apply \cite[Corollary~2.5]{BreuillardGelander03} to the dense subgroup $\Gamma_\infty$. We obtain finitely many elements $h_1,\ldots,h_\ell\in\Gamma_\infty$ whose generated subgroup is dense in $G$. Since $\Gamma_\infty=\bigcup_{n\geq1}\Gamma_n$ and the groups $\Gamma_n$ are increasing, there is a finite integer $b$ such that $h_1,\ldots,h_\ell\in\Gamma_b$. This means $\langle h_1,\ldots,h_\ell\rangle\subset \Gamma_b.$ On the other hand the subgroup on the left is dense in $G$, so $\overline{\Gamma_b}=G$, which yields the conclusion. Note that the integer $b$ only depends on $O_1,\ldots,O_m$, not on the probability vector $p$.
\end{proof}

\subsection{Pointwise mixing}

We now iterate Varj\'u's weak gap theorem to firstly obtain decay in $L^2$, then combined with the Lipschitz bound we will obtain the \textit{uniform} pointwise estimate:

\begin{proposition}\label{prop:mix}
There are constants $C,c>0$, depending only on $G$ and
$\kappa$, such that for every Lipschitz $F:G\to\C$ and
integer $n\ge0$,
\begin{equation}\label{eq:groupmix}
 \|Q^nF-\vartheta(F)\|_\infty
 \le Ce^{-cn^{1/3}}
       \bigl(\|F\|_\infty+\Lip(F)\bigr),
\end{equation}
where $\vartheta(F)=\int_G F\dd\vartheta$.
\end{proposition}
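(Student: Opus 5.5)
We reduce to mean zero by applying the estimate to $\varphi=F-\vartheta(F)$. Then $Q^n\varphi=Q^nF-\vartheta(F)$ and $\vartheta(\varphi)=0$. Also $\|\varphi\|_\infty\le2\|F\|_\infty$ and $\Lip(\varphi)=\Lip(F)$. Write $K=\|\varphi\|_\infty+\Lip(\varphi)$. By \eqref{eq:contr}, every iterate $Q^j\varphi$ has mean zero and satisfies $\|Q^j\varphi\|_\infty+\Lip(Q^j\varphi)\le K$.

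\emph{$L^2$ decay.} Let $b$ be as in Lemma~\ref{lem:block} and $\eta=\kappa^{*b}$, so that $Q_\eta=Q^b$ and Theorem~\ref{thm:varju} applies to $\eta$. Set $\psi_j=Q^{bj}\varphi$. If $\psi_j\neq0$, apply \eqref{eq:varju} to $\psi_j/\|\psi_j\|_2$. The sup norm and Lipschitz constant of this normalized function are at most $K/\|\psi_j\|_2$. Hence
\[
\|\psi_{j+1}\|_2\le\Bigl(1-\frac{c_\eta}{\log^2(2+K/\|\psi_j\|_2)}\Bigr)\|\psi_j\|_2 .
\]
By homogeneity we may normalize $K=1$, so $\|\psi_j\|_2\le1$. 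Let $x_j=\log(1/\|\psi_j\|_2)\ge0$; then $x_{j+1}\ge x_j+c/(1+x_j)^2$. Comparing with the ODE $x'=c/(1+x)^2$ gives $(1+x_j)^3\gtrsim j$. (Run the comparison by counting how many steps $x$ spends in each interval $[\ell,\ell+1]$, namely at most about $(2+\ell)^2/c$.) Therefore
\[
\|Q^{bj}\varphi\|_2\le e^{-cj^{1/3}}K,
\]
and since $\|Q\cdot\|_2\le\|\cdot\|_2$, the same bound holds for $\|Q^n\varphi\|_2$ with $j=\lfloor n/b\rfloor$.

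\emph{From $L^2$ to $L^\infty$.} Let $\phi=Q^n\varphi$, with $\Lip(\phi)\le K$ and $\|\phi\|_2=\epsilon$. Suppose $|\phi(g_0)|=\|\phi\|_\infty=:a$. Then $|\phi|\ge a/2$ on the ball $B(g_0,a/(2K))$. That ball has Haar measure at least $c(a/K)^{D}$, where $D=\dim\SO(d)$, for $a/K\le1$. Hence $\epsilon^2\ge c\,a^2(a/K)^D$, which gives $a\le CK^{D/(D+2)}\epsilon^{2/(D+2)}$. With $K=1$ normalized, this yields $\|Q^n\varphi\|_\infty\le Ce^{-c'n^{1/3}}$. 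Undoing the normalization gives \eqref{eq:groupmix}. For small $n$ the bound is trivial from $\|Q^n\varphi\|_\infty\le 2\|F\|_\infty$, after enlarging $C$.

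\emph{Main obstacle.} The delicate point is the $L^2$ iteration, because the gap in \eqref{eq:varju} deteriorates as the normalized Lipschitz norm $K/\|\psi_j\|_2$ grows. One must check that the recursion really yields $n^{1/3}$ and not something worse. The key is that the loss is only $\log^2$ of the ratio, which turns into the cubic comparison above. Everything else is routine bookkeeping with \eqref{eq:contr}.
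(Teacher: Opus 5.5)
Your proposal is correct and follows essentially the same route as the paper. Both reduce to a mean-zero function, apply Varj\'u's gap to $\eta=\kappa^{*b}$ from Lemma~\ref{lem:block} along the normalized iterates $Q^{bj}\varphi/\|Q^{bj}\varphi\|_2$, obtain $L^2$ decay of order $e^{-cj^{1/3}}$, and pass to $L^\infty$ by the same ball-volume argument before absorbing the remainder $n-b\lfloor n/b\rfloor$ by contractivity. The only difference is bookkeeping in the recursion: the paper fixes a threshold $\varepsilon$ and then optimizes $\log(2+\varepsilon^{-1})\approx j^{1/3}$, whereas you solve the adaptive recursion $x_{j+1}\ge x_j+c/(1+x_j)^2$ by a discrete ODE comparison; both give the exponent $1/3$.
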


\begin{proof}
By subtracting the mean $\vartheta(F)$ and rescaling, it is enough to prove
\eqref{eq:groupmix} when
\[
 \vartheta(F)=0,\qquad
 \|F\|_\infty+\Lip(F)\le1.
\]
Take the integer $b$ as in Lemma~\ref{lem:block}, and shortly denote by $T=Q^b$.
Let $a_j:=\|T^jF\|_2$ for $j\ge 0$. By \eqref{eq:contr} we have
\[
 \|T^jF\|_\infty+\Lip(T^jF)\le1,\qquad
 a_{j+1}\le a_j\le1.
\]

Fix $0<\varepsilon<1/2$. Whenever $a_j>\varepsilon$, the function $T^jF/a_j$
has mean zero, $L^2$ norm one, and
\[
 \|T^jF/a_j\|_\infty+\Lip(T^jF/a_j)=a_j^{-1}(\|T^jF\|_\infty+\Lip(T^jF))
 \le\varepsilon^{-1}.
\]
By Lemma~\ref{lem:block}, we can apply \eqref{eq:varju} with $\eta=\kappa^{*b}$ to obtain
\[
a_{j+1}/a_j
=\left\|T\left(T^jF/a_j)\right)\right\|_2
\le
1-\frac{c_\eta}
{\log^2\!\bigl(2+\|T^jF/a_j\|_\infty+
\operatorname{Lip}(T^jF/a_j)\bigr)}.
\]
where recall $Q_\eta=Q^b=T$. Now
\[
\log\!\bigl(2+\|T^jF/a_j\|_\infty+
\operatorname{Lip}(T^jF/a_j)\bigr)
\le \log(2+\varepsilon^{-1}):=L_\varepsilon.
\]
Hence
\begin{equation}\label{eq:mix-recurrence}
 a_{j+1}\le a_j\left(1-\frac{c_\eta}{L_\varepsilon^2}\right).
\end{equation}
Since $(a_j)_{j\ge0}$ is nonincreasing, if $a_j>\varepsilon$, then
\eqref{eq:mix-recurrence} applies to all $a_0\ge \cdots\ge a_{j-1}$. Thus
\[
 a_j\le \left(1-\frac{c_\eta}{L_\varepsilon^2}\right)^j \le \exp\left(-\frac{c_\eta j}{L_\varepsilon^2}\right).
\]
This means that for every $a_j$, either the above inequality holds or, since $0<\varepsilon<1/2$, we have $2\varepsilon^2+\epsilon< 1$, thus
\[
a_j\le \varepsilon \le (2+\varepsilon)^{-1/2}=e^{-L_\varepsilon/2}.
\]
We therefore deduce for all $j\ge 0$,
\[
a_j\le \max\left\{e^{-L_\varepsilon/2},e^{-c_\eta j/L_\varepsilon^2}\right\}.
\]
To balance the two terms on the right-hand side we may take $L_\varepsilon=(2c_\eta j)^{1/3}$ for $j\ge (\log 4)^3/(2c_\eta)$. Thus, write $c_0=(2c_\eta)^{1/3}/2>0$, we have $a_j\le \exp(-c_0j^{1/3})$ for all $j\ge (\log 4)^3/(2c_\eta)$. Then by absorbing the terms for $j<(\log 4)^3/(2c_\eta)$ to the prefactor one can find constant $C_0$ such that
\begin{equation}\label{eq:mix-ltwo}
 \|T^jF\|_2=a_j\le C_0e^{-c_0j^{1/3}}
\end{equation}
holds for all $j\ge 0$.

For the supremum norm, shortly denote by $H=T^jF$ and
$s=\|H\|_\infty$. If $s>0$, choose $g_0\in G$ with
$|H(g_0)|=s$. Since $\Lip(H)\le1$, we have
$|H(g)|\ge s/2$ on the ball $B_G(g_0,s/2)$.
Haar measure is a smooth volume on the compact manifold $G$,
so, writing $d_G=\dim G$, there exists a constant $c_G>0$ such that
\[
 \vartheta(B_G(g,r))\ge c_G r^{d_G}
 \qquad(g\in G,\ 0<r\le1).
\]
Thus we can bound:
\[
\begin{aligned}
a_j^2
=\|H\|_2^2
&=\int_G|H(g)|^2\,d\vartheta(g)\\
&\ge \int_{B_G(g_0,s/2)}|H(g)|^2\,d\vartheta(g)\\
&\ge \left(\frac{s}{2}\right)^2
      c_G\left(\frac{s}{2}\right)^{d_G}\\
&=\,c_1s^{d_G+2},
\end{aligned}
\]
where $c_1=c_G/2^{d_G+2}$. This implies
\[
\|T^jF\|_\infty=s
\le c_1^{-1/(d_G+2)}a_j^{2/(d_G+2)}.
\]
By \eqref{eq:mix-ltwo} we obtain
\[
\begin{aligned}
\|T^jF\|_\infty
&\le c_1^{-1/(d_G+2)}
\left(C_0e^{-c_0j^{1/3}}\right)^{2/(d_G+2)}\\
&=c_1^{-1/(d_G+2)}C_0^{2/(d_G+2)}
\exp\left(-\frac{2c_0}{d_G+2}j^{1/3}\right)\\
&=C_2e^{-c_2j^{1/3}},
\end{aligned}
\]
where
\[
C_2=c_1^{-1/(d_G+2)}C_0^{2/(d_G+2)}<\infty \text{ and } c_2=\frac{2c_0}{d_G+2}>0.
\]
The same bound holds immediately if $s=0$.

Finally, write $n=bj+r$, with $0\le r<b$. Use \eqref{eq:contr} again we have
\[
 \|Q^nF\|_\infty
 =\|Q^rT^jF\|_\infty
 \le C_2e^{-c_2j^{1/3}}.
\]
For $n\ge2b$, we have $j\ge n/(2b)$ hence $e^{-c_2j^{1/3}}\le e^{-c_2(2b)^{-1/3}n^{1/3}}$. Let $c_3=c_2/(2b)^{-1/3}>0$ and take $C_3$ large enough to absorb the first $2b$ terms, we get for all $n\ge 0$,
\[
 \|Q^nF\|_\infty \le C_3e^{-c_3n^{1/3}}.
\]
\end{proof}

\begin{remark}\label{rem:mix-exponent}
The exponent $1/3$ comes from balancing the two terms
$e^{-L}$ and $e^{-j/L^2}$ for large $j$, hence $L\approx j^{1/3}$. More generally, if the squared
logarithm in \eqref{eq:varju} is replaced by a power $\beta>0$ then the exponent becomes $1/(\beta+1)$.
\end{remark}

\subsection{Pointwise mixing of projection planes}

The smoothing estimates we will use soon to prove Theorem \ref{thm:energy} concern functions of a projection plane, rather than functions of a rotation.
We will obtain the corresponding mixing estimate by fixing one plane
and following its images under rotations.

Equip the Grassmannian $\Gr(d,k)$ with its rotation-invariant probability
measure $\sigma_k$ and the quotient Frobenius metric
\[
 d_{\Gr}(V,W)
 =\inf\{\|O-I\|_{\Frob}:O\in\SO(d),\ OV=W\}.
\]
This metric is bi-Lipschitz equivalent to the standard projection
metric $\|\pi_V-\pi_W\|_{\op}$. Define the average operator $P$ similar to $Q$ by
\[
 (PF)(V)=\sum_i p_iF(O_i^{-1}V).
\]
Thus $PF(V)$ averages $F$ over the planes obtained from $V$
by one random inverse rotation.

Fix $V_0\in\Gr(d,k)$. Given a Lipschitz function $F$ on
$\Gr(d,k)$, define
\[
 f(g)=F(gV_0),\qquad g\in G.
\]
Every $k$-plane is a rotation of $V_0$, so
$\|f\|_\infty=\|F\|_\infty$. Moreover, the rotation
$hg^{-1}$ carries $gV_0$ to $hV_0$, and hence
\[
 d_{\Gr}(gV_0,hV_0)
 \le\|hg^{-1}-I\|_{\Frob}
 =\|h-g\|_{\Frob}.
\]
It follows that $\Lip(f)\le\Lip(F)$.
The law of $gV_0$ under $\vartheta(\dd g)$ is exactly $\sigma_k$. Hence
\[
 \vartheta(f)
 =\int_{\Gr(d,k)}F\dd\sigma_k=:\sigma_k(F).
\]

The two averaging operators $P$ and $Q$ agree under this identification:
\[
 (Qf)(g)
 =\sum_i p_iF(O_i^{-1}gV_0)
 =(PF)(gV_0).
\]
In particular $Q^nf(g)=(P^nF)(gV_0)$ for $n\ge 1$. Since $gV_0$ ranges over every projection plane,
\[
 \|P^nF-\sigma_k(F)\|_\infty
 =\|Q^nf-\vartheta(f)\|_\infty.
\]
Proposition~\ref{prop:mix} therefore implies the following pointwise mixing estimate for projection planes:

\begin{corollary}
 Assume \DRC\ and let $1\le k<d$. There are constants $C,c>0$,
 depending only on the rotation law $\kappa$, $d$, and $k$, such that for every
 Lipschitz function $F:\Gr(d,k)\to\C$ and every integer $n\ge0$,
 \begin{equation}\label{eq:grassmix}
 \|P^nF-\sigma_k(F)\|_\infty
 \le Ce^{-cn^{1/3}}
       \bigl(\|F\|_\infty+\Lip(F)\bigr).
\end{equation}
\end{corollary}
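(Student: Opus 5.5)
The plan is to transfer Proposition~\ref{prop:mix} from $G$ to $\Gr(d,k)$ by lifting functions along the orbit map, exactly as set up in the paragraph preceding the statement. Fix once and for all a base plane $V_0\in\Gr(d,k)$. Given a Lipschitz $F:\Gr(d,k)\to\C$, I will work with the lift $f(g)=F(gV_0)$ on $G$ and check three facts in turn.

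First, $\|f\|_\infty=\|F\|_\infty$, because $\SO(d)$ acts transitively on $\Gr(d,k)$. Second, $\Lip(f)\le\Lip(F)$. This follows from
\[
d_{\Gr}(gV_0,hV_0)\le\|hg^{-1}-I\|_{\Frob}=\|h-g\|_{\Frob},
\]
where the equality uses right-invariance of the Frobenius norm under the orthogonal matrix $g$. Third, $\vartheta(f)=\sigma_k(F)$. This holds because the pushforward of Haar measure under $g\mapsto gV_0$ is a rotation-invariant probability measure on $\Gr(d,k)$, and such a measure is unique, so it equals $\sigma_k$.

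Next I will verify the intertwining identity $Qf(g)=\sum_ip_iF(O_i^{-1}gV_0)=(PF)(gV_0)$. Since $PF$ is again Lipschitz, I can apply the same identity to $PF$ and induct on $n$ to get $Q^nf(g)=(P^nF)(gV_0)$ for every $n\ge0$. Transitivity of the action then gives
\[
\sup_V|P^nF(V)-\sigma_k(F)|=\sup_g|Q^nf(g)-\vartheta(f)|.
\]

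Finally, I will apply Proposition~\ref{prop:mix} to $f$ and use the bounds $\|f\|_\infty\le\|F\|_\infty$ and $\Lip(f)\le\Lip(F)$. This yields \eqref{eq:grassmix} with the same constants $C,c$ as in Proposition~\ref{prop:mix}. Those constants depend only on $G$ and $\kappa$, so they are independent of the choice of $V_0$, and the dependence on $d,k$ in the statement is harmless.

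The only step needing any care is the identification $\vartheta(f)=\sigma_k(F)$, which requires uniqueness of the invariant probability measure on the homogeneous space $\Gr(d,k)$. I regard this as standard, so I do not expect a genuine obstacle anywhere in the argument.
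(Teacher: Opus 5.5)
Your proposal is correct and is essentially the paper's own argument: you lift $F$ to $f(g)=F(gV_0)$, check $\|f\|_\infty=\|F\|_\infty$, $\Lip(f)\le\Lip(F)$ via $d_{\Gr}(gV_0,hV_0)\le\|hg^{-1}-I\|_{\Frob}=\|h-g\|_{\Frob}$, and $\vartheta(f)=\sigma_k(F)$, then use the intertwining $Q^nf(g)=(P^nF)(gV_0)$ and transitivity to transfer Proposition~\ref{prop:mix} with the same constants. One cosmetic remark: the intertwining identity holds for arbitrary functions, so your induction does not need the fact that $PF$ is Lipschitz.
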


\begin{remark}
The planar case is different because $\SO(2)$ is abelian.
Our argument uses that $\SO(d)$ has no nontrivial abelian quotient,
while Varj\'u's Corollary~7 requires a semisimple connected component.
Neither condition holds for $\SO(2)$.
In $\mathbb{R}^2$, a single irrational rotation $O$ generates a dense
subgroup of $\SO(2)$, but satisfies
\[
 (PF)(V)=F(O^{-1}V).
\]
Its iterations simply rotate the function, so
$\|P^nF-\sigma_1(F)\|_\infty
=\|F-\sigma_1(F)\|_\infty$. Thus dense generation alone cannot provide the mixing
estimate \eqref{eq:grassmix} in $\mathbb{R}^2$.
\end{remark}

\section{Absolute continuity for equal contraction ratios}\label{sec:smoothing}
Throughout this section, we work with the IFS \eqref{eq:ifs} with
$r_i=\rho$ for every $1\le i\le m$, and assume \DRC. Let
$\mu=\sum_i p_i(S_i)_*\mu$ be the associated self-similar measure, where $p_i>0$ and $\sum_i p_i=1$. Denote by $\chi=\log(1/\rho)$ and assume
\begin{equation}\label{eq:energy}
 I_{t_0}(\mu)=\iint|x-z|^{-t_0}\dd\mu(x)\dd\mu(z)<\infty
 \quad\text{for some }k<t_0<d.
\end{equation}
Choose
\begin{equation}\label{eq:tchoice}
 k<t<\min(t_0,k+2).
\end{equation}
Then $I_t(\mu)\le I_{t_0}(\mu)+1<\infty$. Let $\alpha=(t-k)/2\in(0,1)$.
Recall that $D_A>0$ is such that $\supp\mu=A\subset B(0,D_A)$.
For $R\ge1$, define $K_R^V=\Gamma_{1/R}^V-\Gamma_{2/R}^V$
and recall that we defined
\[
 U_R(V)=\|\mu_V*K_R^V\|_{L^1(V)}.
\]

\subsection{Estimates for \texorpdfstring{$U_R$}{U R}}

We first prove following estimates for $U_R$.
\begin{lemma}\label{lem:observable}
Uniformly in $R\ge1$,
\begin{equation}\label{eq:three}
 0\le U_R\le2,\qquad
 \Lip(U_R)\le CR,\qquad
 \int_{\Gr(d,k)}U_R\dd\sigma_k\le CR^{-\alpha}.
\end{equation}
\end{lemma}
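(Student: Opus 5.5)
The plan is to prove the three estimates in \eqref{eq:three} separately. The first two use only elementary properties of Gaussians; the third combines Plancherel with the Fourier-energy formula.

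\emph{Bound $0\le U_R\le2$.} Nonnegativity is immediate. Since $\mu_V$ is a probability measure and $\|\Gamma_h^V\|_{L^1(V)}=1$, Young's inequality gives
\[
U_R(V)\le\|K_R^V\|_1\le\|\Gamma_{1/R}^V\|_1+\|\Gamma_{2/R}^V\|_1=2.
\]

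\emph{Lipschitz bound.} Fix $V,W$ and choose $O\in\SO(d)$ with $OV=W$ and $\|O-I\|_{\Frob}\le 2d_{\Gr}(V,W)$. Because $\Gamma_h$ is radial, $K_R^W\circ O=K_R^V$ on $V$. Since $O$ is an isometry $V\to W$, we get
\[
U_R(W)=\int_V\Bigl|\int K_R^V(x-O^{-1}\pi_W z)\dd\mu(z)\Bigr|\dd\LL^k_V(x).
\]
The point $O^{-1}\pi_Wz=\pi_V O^{-1}z$ differs from $\pi_Vz$ by at most $|O^{-1}z-z|+\|\pi_W-\pi_{OV}\|\,|z|\lesssim d_{\Gr}(V,W)D_A$ for $z\in A$. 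Hence
\[
|U_R(W)-U_R(V)|\le\int\|K_R^V(\cdot-u_z)-K_R^V(\cdot-v_z)\|_1\dd\mu(z)\le \|\nabla K_R^V\|_1\sup_z|u_z-v_z|.
\]
Scaling gives $\|\nabla\Gamma_h\|_1=c_k/h$, so $\|\nabla K_R^V\|_1\lesssim R$, and therefore $\Lip(U_R)\le CR$ with $C$ depending on $D_A,d,k$.

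\emph{Average bound.} This is the main step. First apply Cauchy--Schwarz on a ball. Split $V$ into $B_V(0,D_A+1)$ and its complement.
\begin{itemize}
\item Outside the ball, $\mu_V*\Gamma_h$ with $h\le1$ has $L^1$ mass at most $Ce^{-cR^2}$, since $\mu_V$ is supported in $B(0,D_A)$. In fact, handle scales via the tail $\int_{|y|>R^{-1}r}\Gamma_{1/R}$ for $r\ge1$; this contributes $O(R^{-1})$, which is harmless because $\alpha<1$.
\item On the ball, Cauchy--Schwarz and Plancherel give
\[
U_R(V)\lesssim\|\mu_V*K_R^V\|_2+O(e^{-cR^2})=\Bigl(\int_V|\widehat{\mu}(\xi)|^2\bigl(e^{-2\pi^2|\xi|^2/R^2}-e^{-8\pi^2|\xi|^2/R^2}\bigr)^2\dd\xi\Bigr)^{1/2},
\]
using $\widehat{\mu_V}(\xi)=\widehat\mu(\xi)$ for $\xi\in V$.
\end{itemize}
Now average over $V$ and apply Jensen to move the integral inside the square root. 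The integration formula $\int_{\Gr}\int_V g\dd\LL^k_V\dd\sigma_k=c\int_{\R^d}g(\xi)|\xi|^{k-d}\dd\xi$ then yields
\[
\int U_R\dd\sigma_k\lesssim\Bigl(\int_{\R^d}|\widehat\mu(\xi)|^2|\xi|^{k-d}m(|\xi|/R)^2\dd\xi\Bigr)^{1/2},
\]
where $m(s)=e^{-2\pi^2s^2}-e^{-8\pi^2s^2}$. We have $m(s)\lesssim\min(s^2,1)$, so $m(s)^2\lesssim s^{t-k}$ because $0<t-k<2$ (this is exactly why $t<k+2$ was chosen). Therefore
\[
m(|\xi|/R)^2|\xi|^{k-d}\le CR^{-(t-k)}|\xi|^{t-d},
\]
and $\int|\widehat\mu|^2|\xi|^{t-d}=c\,I_t(\mu)<\infty$. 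This gives $\int U_R\dd\sigma_k\le CR^{-(t-k)/2}=CR^{-\alpha}$.

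The delicate points are the measurability and applicability of the integral-geometric formula, and making the bounded-support truncation clean. If the tail contribution is inconvenient, I would instead bound $U_R$ by a weighted $L^2$ norm with weight $(1+|x|^2)^{k/2+1}$, which costs derivatives in $\xi$ of the multiplier. Since those derivatives are again controlled by $\min(s^2,1)$-type bounds, the same argument should go through.
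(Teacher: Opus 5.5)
Your proposal is correct and follows essentially the same route as the paper: the triangle inequality for $0\le U_R\le 2$, rotating $W$ back to $V$ and using $\|\nabla K_R^V\|_1\lesssim R$ for the Lipschitz bound, and Cauchy--Schwarz on $B_V(0,D_A+1)$ plus a Gaussian tail, Plancherel, the integral-geometric formula, Cauchy--Schwarz in $\sigma_k$ and the Fourier form of $I_t(\mu)$ for the averaged bound. Two minor points: $m(s)^2\lesssim s^{t-k}$ holds for every $0\le t-k\le 4$, so it is not what forces $t<k+2$; and the term $\|\pi_W-\pi_{OV}\|$ in your Lipschitz step is zero, since $OV=W$.
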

\begin{proof}
The first estimate follows because every convolution of a probability
measure with a Gaussian kernel has integral one. For the Lipschitz bound, choose $O\in\SO(d)$ with $OV=W$, write $h_V=\mu_V*K_R^V$ and $h_W=\mu_W*K_R^W$.
Since $O^{-1}\pi_W=\pi_VO^{-1}$ and
$K_R^W(Oz)=K_R^V(z)$, we have
\[
 h_W(Oy)=\int_{\R^d}K_R^V(y-\pi_VO^{-1}x)\dd\mu(x),
 \qquad y\in V.
\]
Since the isometry $O:V\to W$ preserves Lebesgue measure,
$U_R(W)=\|h_W\circ O\|_{L^1(V)}$, so both norms can be
compared on $V$. For $F\in C^1(V)$ with $F,\nabla F\in L^1(V)$ one has
\[
 \|F(\cdot-u)-F(\cdot-v)\|_1\le |u-v|\|\nabla F\|_1.
\]
Moreover, $\nabla\Gamma_h^V(y)=-h^{-2}y\Gamma_h^V(y)$, so we can write
$\|\nabla\Gamma_h^V\|_1=h^{-1}\E|Z|$, where $Z$ is a standard
$k$-dimensional Gaussian random vector. Thus $\|\nabla K_R^V\|_1\le C_kR$, and
\begin{align*}
 |U_R(W)-U_R(V)|
 &\le\|h_W\circ O-h_V\|_{L^1(V)}\\
 &\le\int_{\R^d}
       \|K_R^V(\cdot-\pi_VO^{-1}x)-K_R^V(\cdot-\pi_Vx)\|_1
       \dd\mu(x)\\
 &\le C_kR\int_{\R^d}|\pi_V(O^{-1}-I)x|\dd\mu(x)\\
 &\le C_kD_AR\|O-I\|_{\Frob}.
\end{align*}
Here we used $\supp\mu\subset B(0,D_A)$ and
$\|O^{-1}-I\|_{\op}\le\|O^{-1}-I\|_{\Frob}=\|O-I\|_{\Frob}$.
Taking the infimum over $O$ we have
$|U_R(W)-U_R(V)|\le C_kD_AR\,d_{\Gr}(V,W)$, proving the Lipschitz bound.

For the third estimate, recall the Fourier transform $\widehat\mu(\xi):=\int e^{-2\pi i x\cdot\xi}\dd\mu(x)$.
Denote by $|\mathbb S^{j-1}|=\HH^{j-1}(\mathbb S^{j-1})$ the
surface area of the unit sphere in $\R^j$. The identity
\begin{equation}\label{eq:integralgeometry}
 \int_{\Gr(d,k)}\int_V F(\xi)\dd\LL_V^k(\xi)\dd\sigma_k(V)
 =\frac{|\mathbb S^{k-1}|}{|\mathbb S^{d-1}|}
     \int_{\R^d}F(\xi)|\xi|^{k-d}\dd\xi
\end{equation}
holds for every nonnegative Borel function $F:\R^d\to[0,\infty]$. Therefore, we have the following energy representation using Fourier transform (see \cite[Lemma~12.12]{Mattila95} for example):
\begin{equation}\label{eq:riesz}
 I_t(\mu)=c_{d,t}\int_{\R^d}
                |\widehat\mu(\xi)|^2|\xi|^{t-d}\dd\xi,\qquad
 c_{d,t}=\pi^{t-d/2}
             \frac{\Gamma((d-t)/2)}{\Gamma(t/2)}.
\end{equation}
Let $b(u)=e^{-2\pi^2u^2}-e^{-8\pi^2u^2}$.
For $\xi\in V$, we have
$\widehat{\mu_V}(\xi)=\widehat\mu(\xi)$ and
$\widehat{K_R^V}(\xi)=b(|\xi|/R)$.
One has $h_V\in L^1(V)\cap L^2(V)$, and by
Plancherel's identity,
\[
 \|h_V\|_2^2
 =\int_V|\widehat\mu(\xi)|^2|b(|\xi|/R)|^2\dd\LL_V^k(\xi).
\]
By \eqref{eq:integralgeometry} we get
\[
 \int_{\Gr(d,k)}\|h_V\|_2^2\dd\sigma_k(V)
 =\frac{|\mathbb S^{k-1}|}{|\mathbb S^{d-1}|}
       \int_{\R^d}|\widehat\mu(\xi)|^2
       |\xi|^{k-d}|b(|\xi|/R)|^2\dd\xi.
\]
Since $b(u)=6\pi^2u^2+O(u^4)$ at zero and decays exponentially
at infinity, we have
\[
 C_b:=\sup_{u>0}u^{k-t}|b(u)|^2<\infty
 \qquad(t\le k+4).
\]
This range includes our choice of $t$ in \eqref{eq:tchoice}.
Thus, for $\xi\ne0$,
\[
 |\xi|^{k-d}|b(|\xi|/R)|^2
 =R^{k-t}|\xi|^{t-d}(|\xi|/R)^{k-t}|b(|\xi|/R)|^2
 \le C_bR^{k-t}|\xi|^{t-d}.
\]
Using \eqref{eq:riesz} we get
\begin{equation}\label{eq:meanL2}
  \int_{\Gr(d,k)}\|h_V\|_2^2\dd\sigma_k(V)
 \le C R^{k-t} I_t(\mu).
\end{equation}

For $y\in V$ and $r>0$, write $B_V(y,r)=\{z\in V:|z-y|<r\}$.
To pass to the $L^1$ norm of $\mu_V*K_R^V$, using Cauchy-Schwarz on $B_V(0,D_A+1)$ one obtains
\[
 \int_{B_V(0,D_A+1)}|h_V(y)|\dd y\le C_{k,D_A}\|h_V\|_2.
\]
Outside this ball, $|y-\pi_Vx|\ge1$ for $x\in\supp\mu$, together with $|K_R^V|\le\Gamma_{1/R}^V+\Gamma_{2/R}^V$ we get
\[
 \int_{V\setminus B_V(0,D_A+1)}|h_V(y)|\dd y
 \le\int_{\{|z|\ge1\}}\bigl(\Gamma_{1/R}^V(z)+\Gamma_{2/R}^V(z)\bigr)\dd z.
\]
For a standard Gaussian $Z\in\R^k$, the Gaussian integral stands
$\E e^{|Z|^2/4}=2^{k/2}$, so by Markov's inequality,
\[
 \int_{\{|z|\ge1\}}\Gamma_h^V(z)\dd z
 =\Pr\{|Z|\ge h^{-1}\}
 \le 2^{k/2}e^{-1/(4h^2)}.
\]
Applying this with $h=1/R$ and $h=2/R$ we get
\[
 U_R(V)\le C_{k,D_A}\|h_V\|_2+2^{k/2+1}e^{-R^2/16}.
\]
Finally, by Cauchy-Schwarz with respect to $\sigma_k$ and \eqref{eq:meanL2}, we have
\begin{align*}
 \int_{\Gr(d,k)}U_R(V)\dd\sigma_k(V)
 &\le C_{k,D_A}\left(\int_{\Gr(d,k)}\|h_V\|_2^2\dd\sigma_k(V)\right)^{1/2}
       +2^{k/2+1}e^{-R^2/16}\\
 &\le C I_t(\mu)^{1/2}R^{(k-t)/2}+2^{k/2+1}e^{-R^2/16}
 \le CR^{-\alpha}.
\end{align*}
The last inequality uses $\alpha=(t-k)/2$, finiteness of
$I_t(\mu)$, and $\sup_{R\ge1}R^\alpha e^{-R^2/16}<\infty$.
\end{proof}

The proof of Lemma~\ref{lem:observable} uses only compact support of $\mu$ and
$I_t(\mu)<\infty$, with $k<t<\min(d,k+2)$. This lemma therefore
applies to any compactly supported probability measure satisfying
the same energy condition, as needed in Section~\ref{sec:unequal}.

\subsection{Using self-similarity and pointwise mixing}\label{subsec:self-similarity-mixing}

By \eqref{eq:three}, the observable $U_R$ has a small average over
projection planes when $R$ is large. Our aim is to turn this
averaged estimate into a bound for $U_B(V)$ that holds uniformly
in $V$. We first use self-similarity to compare $U_B$ with a
rotation average at the remaining scale $R=B\rho^n$, and then
apply pointwise mixing. Using the word notation from
Section~\ref{subsec:notation}, denote by $V_w=O_w^{-1}V$ for
$w\in\mathcal I^n$. We will show that, whenever $B\rho^n\ge1$,
\begin{equation}\label{eq:recursion}
U_B(V)\le\sum_{|w|=n}p_w U_{B\rho^n}(V_w)
=P^n U_{B\rho^n}(V).
\end{equation}
The important point is that this comparison has no scaling loss
in $L^1$.

To prove \eqref{eq:recursion}, define the similarity map between $k$-planes
$S_{V,w}:V_w\to V$ by
\[
S_{V,w}(z)=\pi_Va_w+\rho^n O_wz.
\]
By definition one has $(\pi_V\circ S_w)_*\mu=(S_{V,w})_*\mu_{V_w}$.
Therefore, by iterating $n$ times the idenity $\mu=\sum_{i} p_i(S_i)_*\mu$ and using the triangle inequality, one gets
\[
U_B(V)\le
\sum_{|w|=n}p_w
\bigl\|((S_{V,w})_*\mu_{V_w})*K_B^V\bigr\|_1.
\]
It remains to identify each norm on the right with
$U_{B\rho^n}(V_w)$. Convolution with a Gaussian kernel of width $h$ pulls back to a Gaussian kernel of width $h/\rho^n$. More precisely, for $y\in V$ and $r=\rho^n$,
\[
\bigl(((S_{V,w})_*\mu_{V_w})*K_B^V\bigr)(y)
=r^{-k}\bigl(\mu_{V_w}*K_{Br}^{V_w}\bigr)
\bigl(r^{-1}O_w^T(y-\pi_Va_w)\bigr).
\]
To verify the identity, write $z=r^{-1}O_w^T(y-\pi_Va_w)$. Its left-hand side is
\[
\int_{V_w}K_B^V(y-\pi_Va_w-rO_wu)\dd\mu_{V_w}(u)
=\int_{V_w}K_B^V(rO_w(z-u))\dd\mu_{V_w}(u).
\]
Then we may prove the identity by
$\Gamma_{1/B}^V(rO_wv)=r^{-k}\Gamma_{1/(Br)}^{V_w}(v)$
and the analogous identity to $2/B$.
In the change of variables $y=\pi_Va_w+rO_wz$,
$\dd\LL_V^k(y)=r^k\dd\LL_{V_w}^k(z)$.
Thus the density factor $r^{-k}$ cancels the Jacobian $r^k$:
\[
\bigl\|((S_{V,w})_*\mu_{V_w})*K_B^V\bigr\|_1
=\int_{V_w}r^{-k}
\bigl|\mu_{V_w}*K_{Br}^{V_w}(z)\bigr|r^k\dd z
=U_{Br}(V_w).
\]
This proves the inequality in \eqref{eq:recursion}.
For the equality in \eqref{eq:recursion}, by iteration we get
\[
(P^nF)(V)=\sum_{i_1,\ldots,i_n}p_{i_1}\cdots p_{i_n}
F(O_{i_n}^{-1}\cdots O_{i_1}^{-1}V)
=\sum_{|w|=n}p_w F(O_w^{-1}V).
\]

We now apply the pointwise mixing estimate \eqref{eq:grassmix}.
Set $R=B\rho^n\ge1$. Applying \eqref{eq:grassmix} to $U_R$ we get
\[
P^nU_R(V)
\le \int_{\Gr(d,k)}U_R(W)\dd\sigma_k(W)
+Ce^{-cn^{1/3}}
\bigl(\|U_R\|_\infty+\Lip(U_R)\bigr).
\]
Combining this with \eqref{eq:recursion} and the estimates
\eqref{eq:three} we obtain
\begin{equation}\label{eq:master}
U_B(V)\le CR^{-\alpha}+CR e^{-c n^{1/3}}.
\end{equation}

We choose $n$ so that both terms on the right-hand side of \eqref{eq:master} are bounded by the same order. Since $R^{-\alpha}=Re^{-cn^{1/3}}$ is equivalent to $(1+\alpha)\log R=cn^{1/3}$, and $\log B=\chi n+\log R$, this leads to make $\log R$ of order $(\log B)^{1/3}$. Shortly denote by
\[
T=\log B,\qquad
\gamma=c(2\chi)^{-1/3},\qquad
a=\frac{\gamma}{1+\alpha},
\]
where $c$ is the constant in \eqref{eq:master}. For $T\ge T_0:=\max\{\log2,(4a)^{3/2},4\chi\}$, we can take
\[
n=\left\lfloor\frac{T-aT^{1/3}}{\chi}\right\rfloor,
\]
so that $n\ge T/(2\chi)$ and
\[
aT^{1/3}\le\log R<aT^{1/3}+\chi,
\]
which also implies
\[
R^{-\alpha}\le e^{-\alpha aT^{1/3}} \text{ and } Re^{-cn^{1/3}}
\le e^\chi e^{-(\gamma-a)T^{1/3}}=e^\chi e^{-\alpha aT^{1/3}}.
\]
We may therefore take
\[
c'=\alpha a=\frac{t-k}{2}\,a>0.
\]
This provides the right bound $U_B(V)\le C'e^{-c'T^{1/3}}$ whenever $B\ge e^{T_0}$. For $1\le B <e^{T_0}$, the same estimate applies by enlarging $C'$, since $U_B(V)\leq 2$. Hence
\begin{equation}\label{eq:increment}
\sup_{V\in\Gr(d,k)} U_B(V)\leq C'e^{-c'(\log B)^{1/3}},\qquad B\geq 1.
\end{equation}

\subsection{Densities and their uniform tails}

We can now finish the proof of Theorem \ref{thm:energy} in the case of equal contraction ratios.

\subsubsection*{Construction of the $L^1$ limits}
For $V\in\Gr(d,k)$ define the sequence of Gaussian smoothings:
\[
g_{V,j}(y)
:=(\mu_V*\Gamma_{2^{-j}}^V)(y)
=\int_V\Gamma_{2^{-j}}^V(y-x)\dd\mu_V(x), \quad j \geq 0.
\]
These smoothings are defined without assuming that $\mu_V$
is absolutely continuous. Each $g_{V,j}$ is a nonnegative
Borel function, and by Fubini's theorem
\[
\int_Vg_{V,j}\dd\LL_V^k
=\int_V\left(\int_V\Gamma_{2^{-j}}^V(y-x)\dd y\right)
\dd\mu_V(x)=1.
\]
By \eqref{eq:increment}, there are fixed $C_0,c_0>0$,
independent of $V$, such that for every $j\ge1$,
\[
\|g_{V,j}-g_{V,j-1}\|_{L^1(V)}
=\|\mu_V*K_{2^j}^V\|_{L^1(V)}
=U_{2^j}(V)\le C_0e^{-c_0j^{1/3}}.
\]
For $j'>j\ge0$, telescoping and the triangle inequality implies
\begin{equation}\label{eq:uniform-cauchy}
\|g_{V,j'}-g_{V,j}\|_{L^1(V)}
\le C_0\sum_{\ell=j+1}^{j'} e^{-c_0\ell^{1/3}}
\le E_j,\qquad
E_j:=C_0\sum_{\ell>j}e^{-c_0\ell^{1/3}}.
\end{equation}
Note that the bound $E_j$ is \textit{independent} of $V$. To obtain both convergence
and a quantitative rate, we estimate this series
tail. Since the summand is decreasing, by using the change of variables $u=x^{1/3}$,
\begin{align*}
\sum_{\ell>j}e^{-c_0\ell^{1/3}}
\le\int_j^\infty e^{-c_0x^{1/3}}\dd x
=3e^{-c_0j^{1/3}}
\left(\frac{j^{2/3}}{c_0}
+\frac{2j^{1/3}}{c_0^2}
+\frac{2}{c_0^3}\right).
\end{align*}
Take $c_1=c_0/2$. The function
$(1+u+u^2)e^{-c_0u/2}$ is bounded on $[0,\infty)$,
so the last right-hand side of the previous inequality is bounded by $C_1e^{-c_1j^{1/3}}$ for a
suitable constant $C_1$.
Thus for constants $C_1,c_1>0$ independent of $V,j$,
\[
E_j\le C_1e^{-c_1j^{1/3}}\rightarrow0.
\]
Fix $V \in \Gr(d,k)$. Since $L^1(V)$ is complete and by \eqref{eq:uniform-cauchy} the sequence is Cauchy, there exists a limit $f_V\in L^1(V)$. Letting $j'\to\infty$ in that inequality, using continuity of
the $L^1$ norm, we have
$\|f_V-g_{V,j}\|_{L^1(V)}\le E_j$.
Because this bound holds for every $V$ with the same $E_j$,
\begin{equation}\label{eq:uniform-limit}
\sup_{V\in\Gr(d,k)}
\|f_V-g_{V,j}\|_{L^1(V)}
\le E_j\le C_1e^{-c_1j^{1/3}}.
\end{equation}
Since $g_{V,j}\ge0$, we have for $(f_V)^-(y)=\max\{-f_V(y),0\}$ that
$\int_V(f_V)^-\dd y\le\|f_V-g_{V,j}\|_1\to0$,
so $f_V\ge0$ almost everywhere. Also
\[
\left|\int_Vf_V\dd y-1\right|
\le\|f_V-g_{V,j}\|_1\rightarrow0.
\]
Hence $f_V\LL_V^k$ is a probability measure for every $V$.

\subsubsection*{Identification of the limit with $\mu_V$}
Now we show that $f_V$ is a density of the original
projected measure $\mu_V = (\pi_V)_* \mu$. Fix $V$, and let $\varphi:V\to\R$ be bounded and continuous.
By Fubini's theorem we can write
\begin{equation}\label{eq:gaussian-test}
\int_V\varphi(y)g_{V,j}(y)\dd y
=\int_V\int_V\varphi(x+2^{-j}z)
\Gamma_1^V(z)\dd z\dd\mu_V(x).
\end{equation}
Fubini applies because the absolute integral is bounded by
$\|\varphi\|_\infty$: both $\mu_V$ and
$\Gamma_1^V\LL_V^k$ are probability measures.
For every fixed $x,z\in V$,
$\varphi(x+2^{-j}z)\to\varphi(x)$ as $j\to\infty$.
Dominated convergence on this product probability space therefore
shows that the right-hand side of \eqref{eq:gaussian-test}
converges to $\int_V\varphi\dd\mu_V$.
Thus $g_{V,j}\LL_V^k$ converges weakly to $\mu_V$.
On the other hand, \eqref{eq:uniform-limit} implies
\[
\left|\int_V\varphi g_{V,j}\dd y-\int_V\varphi f_V\dd y\right|
\le\|\varphi\|_\infty E_j\rightarrow0.
\]
The two limits agree for every bounded continuous
$\varphi$ so the measures $\mu_V=f_V\LL_V^k.$ Since $V$ was arbitrary, this identifies every constructed
limit and proves \eqref{eq:main-rate}. We finally just choose a Borel representative of \(f_V\) which is nonnegative, finite, and zero outside \(\pi_V A\).

\subsubsection*{A uniform estimate for the density tails}
We now use \eqref{eq:uniform-limit} to bound the mass of the set where $f_V$ is large to get the bound for density. Firstly, for every $V,j$ we have
\[
\|g_{V,j}\|_\infty
\le\|\Gamma_{2^{-j}}^V\|_\infty
=(2\pi)^{-k/2}2^{kj}\le2^{kj}.
\]
For $M\ge M_*:=2^{2(k+1)}$, choose
\[
j=j(M):=\left\lfloor\frac{\log_2(M/2)}{k}\right\rfloor.
\]
This choice depends only on $M$ and $k$ and satisfies
\[
j\ge1,\qquad 2^{kj}\le M/2,\qquad
j\ge\frac{\log M}{2k\log2}.
\]
On the measurable set $\{f_V>M\}$, we have almost everywhere the bound:
\[
f_V-g_{V,j}\ge f_V-M/2\ge f_V/2.
\]
Integrating and using \eqref{eq:uniform-limit} we obtain
\begin{align*}
\int_{\{f_V>M\}}f_V\dd\LL_V^k
&\le2\|f_V-g_{V,j(M)}\|_{L^1(V)}\\
&\le2C_1e^{-c_1j(M)^{1/3}}
\le C_2e^{-c_2(\log M)^{1/3}}.
\end{align*}
All constants are independent of $V$. For $1\le M<M_*$,
the left-hand side is at most one, so enlarging $C_2$
allows us to use the same estimate to this range as it is bounded, and the bound is still uniform in $V$. This proves \eqref{eq:main-tail}.

\section{Absolute continuity for unequal contraction ratios}\label{sec:unequal}

We now consider the IFS \eqref{eq:ifs} with arbitrary contraction
ratios $0<r_i<1$. Under the finite-energy
hypothesis, we prove the absolute-continuity and density bounds in
Theorem~\ref{thm:energy}. Recall that the estimates for $U_R(V)=\|\mu_V*(\Gamma_{1/R}^V-\Gamma_{2/R}^V)\|_{L^1(V)}$ in Lemma~\ref{lem:observable}
use only compact support and finite energy. Hence we still have
\[
 0\le U_R\le2,\qquad \Lip(U_R)\le CR,\qquad
 \sigma_k(U_R)\le CR^{-\alpha}\quad(R\ge1),
\]
where $k<t<\min(t_0,k+2)$ and $\alpha=(t-k)/2>0$.
The only new analytic step is to recover \eqref{eq:increment} when \(R=Br_w\) depends on the word \(w\).

\subsection{Dense rotations among words with fixed symbol counts}
We first find a way to pick up a subset of finite words corresponding to a common
contraction ratio without losing dense rotations. For this we will use Breuillard-Gelander
\cite[Corollary~2.5]{BreuillardGelander03} again.

For a word $u$ over $\{1,\ldots,m\}$, let
$\mathbf c(u)=(c_i(u))_{i=1}^m\in\N_0^m$ be the \textit{count vector}, whose $i$th entry $c_i(u)$ is the number of occurrences of $i$ in the word $u$. If $g_1,\ldots,g_m\in G$, write $g_u$ for their ordered product along $u$. For the empty word, we set $g_{\varnothing}=e$ and $\mathbf c(\varnothing)=0$.

\begin{lemma}\label{lem:balanced}
Let $G$ be a compact connected semisimple Lie group, and
suppose $\langle g_1,\ldots,g_m\rangle$ is dense in $G$.
There is a vector $\mathbf C\in\N^m$ such that, for every
integer vector $\mathbf D\ge\mathbf C$ coordinatewise,
\[
 \overline{\langle g_u:\mathbf c(u)=\mathbf D\rangle}=G.
\]
\end{lemma}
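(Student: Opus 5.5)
The plan is to rerun the argument of Lemma~\ref{lem:block}, replacing ``words of the same length'' by ``words with the same count vector''. Define
\[
 \Gamma=\bigl\langle g_u^{-1}g_v:\ \mathbf c(u)=\mathbf c(v)\bigr\rangle,
 \qquad H=\overline{\Gamma}.
\]
I will show that $H=G$, and then use a finite generation argument to pass to a fixed count vector.

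\emph{Step 1: $H$ is normal in $G$.}
For each $i$ and each pair with $\mathbf c(u)=\mathbf c(v)$,
\[
 g_i^{-1}(g_u^{-1}g_v)g_i=g_{ui}^{-1}g_{vi},
\]
and $\mathbf c(ui)=\mathbf c(vi)$. Hence $g_i^{-1}\Gamma g_i\subset\Gamma$, and so $g_i^{-1}Hg_i\subset H$.
Next I upgrade this to an equality. As in Lemma~\ref{lem:block}, compactness gives exponents $q_j\to\infty$ with $g_i^{q_j}\to e$, so $g_i^{-(q_j-1)}\to g_i$. Since $H$ is closed, this yields $g_iHg_i^{-1}\subset H$, and hence $g_iHg_i^{-1}=H$.
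So every element of $\langle g_1,\dots,g_m\rangle$ normalises $H$. By density and continuity, every element of $G$ does, so $H$ is normal.

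\emph{Step 2: $G/H$ is abelian, so $H=G$.}
For any $i,j$ the words $ij$ and $ji$ have the same count vector. Thus $(g_ig_j)^{-1}g_jg_i\in\Gamma$, which says that the images of $g_i$ and $g_j$ commute in $G/H$. These images generate a dense subgroup of $G/H$, so $G/H$ is abelian and $[G,G]\subset H$.
A compact connected semisimple Lie group is perfect. Therefore $H=G$, so $\Gamma$ is dense in $G$.

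\emph{Step 3: finite generation and fixed counts.}
By \cite[Corollary~2.5]{BreuillardGelander03}, the dense subgroup $\Gamma$ contains finitely many elements $h_1,\dots,h_\ell$ generating a dense subgroup. Each $h_s$ is a finite product of elements $(g_u^{-1}g_v)^{\pm1}$, and only finitely many pairs $(u,v)$ occur in total. Call their common count vectors $\mathbf E_1,\dots,\mathbf E_N$.
Let $\mathbf C\in\N^m$ be the coordinatewise maximum of the $\mathbf E_r$ and the vector $(1,\dots,1)$.

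Fix $\mathbf D\ge\mathbf C$ and one pair $(u,v)$ with count vector $\mathbf E_r$. Choose any word $z$ with $\mathbf c(z)=\mathbf D-\mathbf E_r\in\N_0^m$. Then $\mathbf c(zu)=\mathbf c(zv)=\mathbf D$, and
\[
 g_{zu}^{-1}g_{zv}=g_u^{-1}g_z^{-1}g_zg_v=g_u^{-1}g_v .
\]
The placement of $z$ matters: it must be a common prefix, since a suffix would instead give $g_ug_v^{-1}$.
So every $g_u^{-1}g_v$ used lies in $\langle g_w:\mathbf c(w)=\mathbf D\rangle$. Hence all $h_s$ lie in that subgroup too, and the subgroup is dense.

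I expect the main obstacle to be Step 1. Self-conjugation by $g_i$ naturally gives only the inclusion $g_i^{-1}Hg_i\subset H$, and the recurrence trick borrowed from Lemma~\ref{lem:block} is what produces the reverse inclusion and hence normality. Everything else is bookkeeping with count vectors.
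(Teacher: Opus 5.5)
Your proof is correct and follows the paper's argument step for step. You show the closure is normal via conjugation plus the recurrence trick, get an abelian quotient from $\mathbf c(ij)=\mathbf c(ji)$ and perfectness, then apply Breuillard--Gelander and pad to a fixed count vector. The only differences are cosmetic. The paper uses generators $g_ug_v^{-1}$, conjugates by $g_i$, and pads with a common suffix, whereas you use the mirror convention $g_u^{-1}g_v$ with a common prefix (as in Lemma~\ref{lem:block}). You also make explicit that $\mathbf C$ has positive entries.
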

\begin{proof}
Similar to the proof of Lemma~\ref{lem:block}, define the subgroup
\[
 B:=\langle g_ug_v^{-1}:\mathbf c(u)=\mathbf c(v)\rangle.
\]
Let $H:=\overline{B}$. If $\mathbf c(u)=\mathbf c(v)$, then also $\mathbf c(iu)=\mathbf c(iv)$. Hence, for every generator $g_ug_v^{-1}$ of $B$,
\[
g_i(g_ug_v^{-1})g_i^{-1}=g_{iu}g_{iv}^{-1}\in B.
\]
Thus $g_iBg_i^{-1}\subset B$, and taking closures implies
$g_iHg_i^{-1}\subset H$. As in the proof of Lemma~\ref{lem:block}, we can use compactness to find integers $q_j\to\infty$ such that $g_i^{q_j}\to e$. Thus $g_i^{q_j-1}\to g_i^{-1}$ implies
$g_i^{-1}Hg_i\subset H$. Hence $g_iHg_i^{-1}=H$. Since
$\langle g_1,\ldots,g_m\rangle$ is dense and $H$ is closed, $H$ is normal.

Since $H$ is normal in $G$, the quotient $G/H$ is a group. The words $ij$ and $ji$ have the same count vector, and therefore
\[
[g_i,g_j]=g_{ij}g_{ji}^{-1}\in H.
\]
Thus the images of the $g_i$ in $G/H$ commute. Since the $g_i$ generate a dense subgroup of $G$, their images generate a dense subgroup of $G/H$. Hence $G/H$ is abelian, which implies that the commutator subgroup satisfies $[G,G]\subset H$. Since $G$ is connected and semisimple, $\overline{[G,G]}=G$. As $H$ is closed, it follows that $H=G$.

Now apply \cite[Corollary~2.5]{BreuillardGelander03} to $B$ and choose a finite collection
$h_1,\ldots,h_r\in B$ generating a dense subgroup of $G$.
Each $h_j$ is a finite product of the generating element of $B$ and their
inverses. Collect all the elements that occur in these products then there are
finitely many pairs $(u_\ell,v_\ell)$, $1\leq\ell\leq L$, with
$\mathbf c(u_\ell)=\mathbf c(v_\ell)$, such that
\[
 \overline{\langle g_{u_\ell}g_{v_\ell}^{-1}:
 1\leq\ell\leq L\rangle}=G.
\]
Indeed, their generated subgroup contains every $h_j$.
Choose $\mathbf C=(C_i)_{i=1}^m\in\N^m$ such that $C_i \ge c_i(u_\ell)=c_i(v_\ell)$ for all $i = 1,\dots,m$ and for every $\ell$.
Now for any given integer vector $\mathbf D\geq\mathbf C$, for each $\ell$ we may find a word $w_\ell$ (possibly empty word) with $\mathbf c(w_\ell)=\mathbf D-\mathbf c(u_\ell)$. Then
\[
\mathbf c(u_\ell w_\ell)=\mathbf c(v_\ell w_\ell)=\mathbf D,
\qquad
g_{u_\ell w_\ell}g_{v_\ell w_\ell}^{-1}
=
g_{u_\ell}g_{v_\ell}^{-1}.
\]
Thus $\langle g_u:\mathbf c(u)=\mathbf D\rangle$ contains
$g_{u_\ell}g_{v_\ell}^{-1}$ for every $\ell$. Since these elements
generate a dense subgroup of $G$, so does
$\langle g_u:\mathbf c(u)=\mathbf D\rangle$.
\end{proof}

\subsection{Blocks with fixed symbol counts}
We use the symbolic probability space from Section~\ref{subsec:notation}.
By a Bernoulli sequence we mean $\omega=(\omega_j)_{j\ge1}$ with
law $\Pp$, so its symbols are independent with probabilities $(p_i)_{i=1}^m$.
A Bernoulli word of length $n$ is a finite block of $n$ such symbols,
it equals $w\in\mathcal I^n$ with probability $p_w$.

By Lemma~\ref{lem:balanced}, there is
$\mathbf C\in\N^m$ such that the rotations of words with
count vector $\mathbf C=(C_i)_{i=1}^m$ generate a dense subgroup of $G=\SO(d)$.
Put $\ell=\sum_i C_i$, and define a probability measure on $G$
\[
 \nu=\frac{1}
      {\sum_{\substack{w\in\mathcal I^\ell,\mathbf c(w)=\mathbf C}}p_w}\sum_{\substack{w\in\mathcal I^\ell,\mathbf c(w)=\mathbf C}}p_w\delta_{O_w}.
\]
Thus $\nu$ is the law of the random rotation $O_w$ of an $\ell$-letter
Bernoulli word conditional on having count vector $\mathbf C$.
Since $p_w=\prod_i p_i^{C_i}$ if a fixed product for every word in this class,
the conditional distribution on these words is in fact uniform.
In particular, every such word has positive conditional probability,
so $\supp\nu$ generates a dense subgroup of $G$.
By Lemma~\ref{lem:block} applied to $\nu$, one can find an integer $b\ge1$ with
\[
 \overline{\left\langle
 \supp\bigl(\widetilde{\nu^{*b}}*\nu^{*b}\bigr)
 \right\rangle}=G.
\]
Set $L=b\ell$ and $\mathbf D_*=b\mathbf C=(bC_i)_{i=1}^m$. For each $\mathbf D\in\mathcal D:=
\{\mathbf D\in\N_0^m:\sum_iD_i=L\}$, define
\[
 p_{\mathbf D}=\sum_{\mathbf c(w)=\mathbf D}p_w,\qquad
 \eta_{\mathbf D}=p_{\mathbf D}^{-1}
       \sum_{\mathbf c(w)=\mathbf D}p_w\delta_{O_w},\qquad
 a(\mathbf D)=-\sum_iD_i\log r_i.
\]
Here we have $p_{\mathbf D}>0$, $\sum_{\mathbf D\in\mathcal D}p_{\mathbf D}=1$, and every word $w$ with $\mathbf c(w)=\mathbf D$ has the same contraction ratio $e^{-a(\mathbf D)}$ in the map $S_w$. Also, if $w_1,\ldots,w_b$ all satisfy $\mathbf c(w_j)=\mathbf C$, then their concatenation satisfies $\mathbf c(w_1\cdots w_b)=\mathbf D_*$. Hence $\supp\nu^{*b}\subset\supp\eta_{\mathbf D_*},$
which implies
\begin{equation}\label{eq:count-support}
\overline{\left\langle
\supp\bigl(\widetilde{\eta_{\mathbf D_*}}*\eta_{\mathbf D_*}\bigr)
\right\rangle}=G.
\end{equation}
Thus $\mathbf D_*$ is a count vector for which the conditional rotation law satisfies the dense support condition required by Varj\'u's $L^2$ weak gap theorem. Set $p_*:=p_{\mathbf D_*}>0$, and define
\[
a_-:=\min_{\mathbf D\in\mathcal D}a(\mathbf D)>0,
\qquad
a_+:=\max_{\mathbf D\in\mathcal D}a(\mathbf D)<\infty.
\]
Every length-$L$ block has contraction between $e^{-a_+}$ and $e^{-a_-}$, and a block with count vector $\mathbf D_*$ occurs with probability $p_*$. The other count vectors are kept with their original probabilities.

\subsection{Mixing conditional on the count vectors}
For $\mathbf D\in\mathcal D$, define its associated average operator by
\[
 (P_{\mathbf D}F)(V)
 =\int_G F(O^{-1}V)\dd\eta_{\mathbf D}(O).
\]
The next lemma is the form of Proposition~\ref{prop:mix} needed
when different rotation laws occur between the mixing steps.

\begin{lemma}\label{lem:count-mixing}
There are constants $C,c>0$ such that, for every $n\ge 1$, every sequence
$\mathbf D_1,\ldots,\mathbf D_n\in\mathcal D$ and every
Lipschitz function $F:\Gr(d,k)\to\C$,
\begin{equation}\label{eq:count-mixing}
 \|P_{\mathbf D_1}\cdots P_{\mathbf D_n}F-\sigma_k(F)\|_\infty
 \le Ce^{-cN_*(n)^{1/3}}\bigl(\|F\|_\infty+\Lip(F)\bigr),
\end{equation}
where $N_*(n)=\#\{j\le n:\mathbf D_j=\mathbf D_*\}$.
\end{lemma}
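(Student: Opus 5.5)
The plan is to lift to the group $G=\SO(d)$ exactly as in the passage from Proposition~\ref{prop:mix} to \eqref{eq:grassmix}. Fix $V_0$ and set $f(g)=F(gV_0)$. Then $P_{\mathbf D}F(gV_0)=(Q_{\eta_{\mathbf D}}f)(g)$, so the product $P_{\mathbf D_1}\cdots P_{\mathbf D_n}F$ corresponds to $Q_{\eta_{\mathbf D_1}}\cdots Q_{\eta_{\mathbf D_n}}f$. Moreover $\|f\|_\infty+\Lip(f)\le\|F\|_\infty+\Lip(F)$ and $\vartheta(f)=\sigma_k(F)$. After subtracting the mean and rescaling, it therefore suffices to treat $f$ with $\vartheta(f)=0$ and $\|f\|_\infty+\Lip(f)\le1$, and to bound the sup norm of the product.

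Each $Q_{\eta_{\mathbf D}}$ is an average of left translates by a probability measure. Hence \eqref{eq:contr} holds for every such operator: the mean is preserved, the $L^2$ and $L^\infty$ norms do not increase, and the Lipschitz constant does not increase. Consequently, for every intermediate function $H_j=Q_{\eta_{\mathbf D_{j}}}\cdots Q_{\eta_{\mathbf D_n}}f$ (applied from the right, so that operators act in the order $\mathbf D_n$ first), we still have mean zero and $\|H_j\|_\infty+\Lip(H_j)\le1$. Define $a_j=\|H_j\|_2$; this sequence is nonincreasing as more operators are applied.

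At each of the $N_*(n)$ steps where $\mathbf D_j=\mathbf D_*$, Theorem~\ref{thm:varju} applies with $\eta=\eta_{\mathbf D_*}$, thanks to \eqref{eq:count-support}. If $a>\varepsilon$, that step multiplies the $L^2$ norm by at most $1-c_\eta/L_\varepsilon^2$, with $L_\varepsilon=\log(2+\varepsilon^{-1})$. All other steps are $L^2$ contractions and can only help. The dichotomy argument of Proposition~\ref{prop:mix} then goes through verbatim, with $j$ replaced by the number of good steps performed. This gives
\[
\|Q_{\eta_{\mathbf D_1}}\cdots Q_{\eta_{\mathbf D_n}}f\|_2\le C_0e^{-c_0N_*(n)^{1/3}}.
\]
Note that the step where we divide by $a_j$ and need the bound $\varepsilon^{-1}$ uses only monotonicity and Lipschitz control, both of which are uniform over the whole sequence. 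There is therefore no dependence on where the good blocks are placed.

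Finally, the conversion from $L^2$ to $L^\infty$ is identical to Proposition~\ref{prop:mix}: a function of Lipschitz constant at most $1$ whose maximum is $s$ satisfies $\|\cdot\|_2^2\ge c_1 s^{d_G+2}$. This yields $\|\cdot\|_\infty\le Ce^{-cN_*(n)^{1/3}}$. Since there is a single good law $\eta_{\mathbf D_*}$, the constant $c_\eta$ is fixed, and so $C,c$ are independent of $n$ and of the sequence. Unlike in Proposition~\ref{prop:mix}, no regrouping into blocks of length $b$ is needed, because \eqref{eq:count-support} already holds for $\eta_{\mathbf D_*}$ itself. The only point needing care, which is mild, is checking that the non-good operators interleaved between good ones preserve the hypotheses of Theorem~\ref{thm:varju}. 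This follows from \eqref{eq:contr}, because left translations are isometries for the bi-invariant metric.
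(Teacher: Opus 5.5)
Your proposal is correct and follows essentially the same route as the paper. You lift to $\SO(d)$ and use that every $Q_{\eta_{\mathbf D}}$ preserves means and does not increase $\|\cdot\|_2$ or $\|\cdot\|_\infty+\Lip$; you apply Theorem~\ref{thm:varju} with the single law $\eta_{\mathbf D_*}$ only at the $N_*(n)$ good steps via the $\varepsilon$-dichotomy; and you convert $L^2$ to $L^\infty$ by the ball-volume argument. The only cosmetic difference is that the paper balances by choosing $\varepsilon=e^{-\gamma N_*(n)^{1/3}}$ directly, rather than reusing the $L_\varepsilon$-balancing from the proof of Proposition~\ref{prop:mix}.
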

\begin{proof}
First work on $G$, with the average operator $Q_{\mathbf D}H(g)=\int H(O^{-1}g)\dd\eta_{\mathbf D}(O)$.
Each $Q_{\mathbf D}$ preserves Haar means and contracts
the $L^2$ norm, the supremum norm, and the Lipschitz seminorm.
Subtract the mean and rescale so that
$\vartheta(H)=0$ and $\|H\|_\infty+\Lip(H)\le1$.
Define
\[
 H_0=H,\qquad H_j=Q_{\mathbf D_{n-j+1}}H_{j-1}
 \quad(1\le j\le n),
\]
so that $H_n=Q_{\mathbf D_1}\cdots Q_{\mathbf D_n}H$, and the operators act from right to left. Each $H_j$ has mean
zero and satisfies $\|H_j\|_\infty+\Lip(H_j)\le1$.
Fix $0<\varepsilon<1/2$. Similar to the proof of Proposition~\ref{prop:mix}, whenever
$\mathbf D_{n-j+1}=\mathbf D_*$ and
$\|H_{j-1}\|_2>\varepsilon$, passing from $H_{j-1}$ to $H_j$
decreases the $L^2$ norm by a factor at most
\[
 1-\frac{c}{\log^2(2+\varepsilon^{-1})}.
\]
where the constant $c>0$ only depends on the rotation law $\eta_{\mathbf{D}_*}$. This follows from \eqref{eq:count-support} and
Theorem~\ref{thm:varju}, applied to $H_{j-1}/\|H_{j-1}\|_2$,
whose supremum norm plus Lipschitz seminorm is at most
$\varepsilon^{-1}$. Since the average operators do not increase the
$L^2$ norm, if a norm $\|H_{j-1}\|_2$ falls to at most $\varepsilon$ at any
stage, it stays at most $\varepsilon$ after this. Otherwise all the norms up to $n-1$ stay above $\varepsilon$ and the contraction factor can be used at all $N_*(n)$ occurrences of
$Q_{\mathbf D_*}$. This implies that
\[
 \|H_n\|_2
 \le\max\left\{\varepsilon,
 \exp\left(-\frac{cN_*(n)}{\log^2(2+\varepsilon^{-1})}\right)\right\}.
\]
Choose $\gamma>0$ such that $4\gamma^3\le c$, and set
\[
 N_0=\left\lceil\left(\frac{\log3}{\gamma}\right)^3\right\rceil.
\]
For $N_*(n)\ge N_0$, take $\varepsilon=e^{-\gamma N_*(n)^{1/3}}$.
The choice of $N_0$ implies that $\gamma N_*(n)^{1/3}\ge\log3$,
so $\varepsilon\le1/3<1/2$ and
\[
 \log(2+\varepsilon^{-1})
 =\log\bigl(2+e^{\gamma N_*(n)^{1/3}}\bigr)
 \le\log3+\gamma N_*(n)^{1/3}
 \le2\gamma N_*(n)^{1/3}.
\]
Moreover, the second term in the maximum is also at most
$\varepsilon$, since
\[
 \exp\left(-\frac{cN_*(n)}{\log^2(2+\varepsilon^{-1})}\right)
 \le\exp\left(-\frac{c}{4\gamma^2}N_*(n)^{1/3}\right)
 \le e^{-\gamma N_*(n)^{1/3}}=\varepsilon.
\]
Thus $\|H_n\|_2\le e^{-\gamma N_*(n)^{1/3}}$ whenever
$N_*(n)\ge N_0$. Both $\gamma$ and $N_0$ are independent of
$n$ and the type sequence. If $N_*(n)<N_0$, we have
$\|H_n\|_2\le\|H_0\|_2\le1$, regardless of the size of $n$.
Thus by choosing $C=e^{\gamma N_0^{1/3}}$ we have
\[
\|H_n\|_2\le Ce^{-\gamma N_*(n)^{1/3}}
\]
for every $n$ and every count-vector sequence. Since $\Lip(H_n)\le1$, the same ball-volume argument as in the proof of Proposition~\ref{prop:mix} yields the bound:
\[
\|H_n\|_\infty
\le C\|H_n\|_2^{2/(\dim G+2)}
\le C'e^{-c'N_*(n)^{1/3}}
\]
for constants $C',c'>0$ independent of $n$ and count-vector sequences. We can then rescale to get the estimate for general $H$, and finally using $H(g)=F(gV_0)$ as in Section~\ref{sec:mix}, we obtain \eqref{eq:count-mixing} for the Grassmannian operators.
\end{proof}

\subsection{The stopping-time argument}

We now choose the number of blocks depending on their contraction ratios, stopped when the rescaled smoothing parameter reaches the same range as in the equicontractive case.

\begin{proposition}\label{prop:unequal-increment}
Under the hypotheses of Theorem~\ref{thm:energy}, there are constants $C,c>0$ such that
\[
\sup_{V\in\Gr(d,k)}U_B(V)
\le Ce^{-c(\log B)^{1/3}}\qquad(B\ge 1).
\]
\end{proposition}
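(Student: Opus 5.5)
Fix $V$ and $B\ge1$. The plan is to mimic the equicontractive argument of Section~\ref{subsec:self-similarity-mixing}, working in length-$L$ blocks grouped by count vector and using a stopping time in place of a fixed $n$.

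\emph{Step 1: self-similarity over blocks.} Iterating self-similarity over $n$ blocks of length $L$ and grouping words by the count-vector sequence $(\mathbf D_1,\ldots,\mathbf D_n)$, every word $w=w_1\cdots w_n$ with $\mathbf c(w_j)=\mathbf D_j$ has $r_w=e^{-\sum_j a(\mathbf D_j)}$. The Jacobian cancellation of Section~\ref{subsec:self-similarity-mixing} then gives $\|((S_{V,w})_*\mu_{V_w})*K_B^V\|_1=U_{Br_w}(V_w)$. Averaging over words with a fixed type sequence produces exactly $P_{\mathbf D_1}\cdots P_{\mathbf D_n}$. The ordering needs a check: $O_w=O_{w_1}\cdots O_{w_n}$, so $V_w=O_{w_n}^{-1}\cdots O_{w_1}^{-1}V$, and this matches the composition.

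To allow $n$ to depend on the types, I use a stopping time. Let $\tau$ be the first $n$ with $\sum_{j\le n}a(\mathbf D_j)\ge T-aT^{1/3}$, where $T=\log B$. This is a stopping time for the i.i.d.\ type process with law $(p_{\mathbf D})$. Since self-similarity may be expanded adaptively along a stopping time (the stopped words form a prefix-free cover), I get
\[
U_B(V)\le \E\bigl[(P_{\mathbf D_1}\cdots P_{\mathbf D_\tau}U_{R_\tau})(V)\bigr],\qquad R_\tau=Be^{-\sum_{j\le\tau}a(\mathbf D_j)}\in[e^{aT^{1/3}-a_+},e^{aT^{1/3}}].
\]
In particular $R_\tau\ge1$ for large $T$. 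Moreover $\tau\in[T/(2a_+),T/a_-]$ deterministically.

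\emph{Step 2: conditional mixing.} On each type path, apply Lemma~\ref{lem:count-mixing} with Lemma~\ref{lem:observable}:
\[
P_{\mathbf D_1}\cdots P_{\mathbf D_\tau}U_R(V)\le CR^{-\alpha}+CR\,e^{-cN_*(\tau)^{1/3}}.
\]
Here $R=R_\tau$ is fixed given the path, and Lemma~\ref{lem:observable} holds for every $R\ge1$.

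\emph{Step 3: controlling $N_*$.} This is the main obstacle, since $N_*(\tau)$ is random. Because $\tau\ge n_0:=\lfloor T/(2a_+)\rfloor$, we have $N_*(\tau)\ge N_*(n_0)$, which is Binomial$(n_0,p_*)$. Hoeffding gives $\Pp(N_*(n_0)<p_*n_0/2)\le e^{-cT}$. On that bad event I bound the integrand by $2$, since every $P_{\mathbf D}$ preserves $\|\cdot\|_\infty\le2$. On the good event, $N_*^{1/3}\ge c''T^{1/3}$.

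Combining,
\[
U_B(V)\le Ce^{-\alpha aT^{1/3}}+Ce^{aT^{1/3}}e^{-c'''T^{1/3}}+2e^{-cT}.
\]
Choosing $a$ small relative to $c'''$, as in the balancing of \eqref{eq:master}, gives the bound $Ce^{-c(\log B)^{1/3}}$ for large $B$. Small $B$ is absorbed using $U_B\le2$. All constants are independent of $V$, since Lemma~\ref{lem:count-mixing} is uniform over type sequences.
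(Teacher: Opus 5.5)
Your proposal is correct and follows essentially the same route as the paper. Both arguments use length-$L$ blocks grouped by count vector, a stopping time at threshold $T-u$ with $u\asymp T^{1/3}$, and the prefix-code expansion of self-similarity that yields $\E\,(P_{\mathbf D_1}\cdots P_{\mathbf D_\tau}U_R)(V)$; they then bound $N_*(\tau)\ge N_*(n_0)$ by a binomial lower tail with $n_0$ chosen independently of $u$, and apply Lemma~\ref{lem:count-mixing} on the good event and the trivial bound $2$ on the bad one. The only differences are cosmetic: you use Hoeffding rather than Chernoff, and $n_0=\lfloor T/(2a_+)\rfloor$ rather than $\lfloor (T-u)/a_+\rfloor$.
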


\begin{proof}
Let $W_j=\omega_{(j-1)L+1}\cdots\omega_{jL}$, $j\ge1$, be the
successive disjoint blocks of length $L$ in a Bernoulli sequence
$\omega$, and let $\mathbf D_j:=\mathbf c(W_j)$.
These blocks are independent Bernoulli words, so the types are
independent with law $(p_{\mathbf D})_{\mathbf{D}\in \mathcal{D}}$.
Conditioned on any finite type sequence $\mathbf{D}_{j_1},\cdots,\mathbf{D}_{j_l}$, the block rotations are independent with respective laws $\eta_{\mathbf D_{j_1}},\cdots,\eta_{\mathbf D_{j_l}}$.

Write $B=e^T$ for convenience. For sufficiently large $T$, we will choose a parameter $u$
satisfying $a_+\le u\le T/2$. The range of $T$ and the choice of $u$ will be made at the end of the proof. Define the stopping-time
\[
 \tau:=\min\left\{n\ge1:
       \sum_{j=1}^n a(\mathbf D_j)\ge T-u\right\}
\]
and write
$$ R=B\prod_{j=1}^{\tau}r_{W_j}
  =\exp\left(T-\sum_{j=1}^{\tau}a(\mathbf D_j)\right).$$
Since $a_-\le a(\mathbf D_j)\le a_+$, we have
\[
 \frac{T-u}{a_+}\le\tau
 \le\left\lceil\frac{T-u}{a_-}\right\rceil.
\]
Moreover, by minimality of $\tau$,
\[
 T-u\le\sum_{j=1}^{\tau}a(\mathbf D_j)<T-u+a_+,
\]
and hence
\begin{equation}\label{eq:stopped-scale}
 e^{u-a_+}\le R\le e^u.
\end{equation}
In particular, $R\ge1$. Both $\tau$ and $R$ depend only on
the block types.

We next show that, with high probability, at least $c_0T$ of the stopped blocks have count vector $\mathbf D_*$, that is, are of type $\mathbf D_*$ for some constant $c_0>0$ that we will choose soon. Let
\[
 n_0=\left\lfloor\frac{T-u}{a_+}\right\rfloor.
\]
Then $n_0\le\tau$, and, since $u\le T/2$, $n_0\ge T/4a_+$ for all sufficiently large $T$. The lower bound $T/4a_+$ is independent of the choice of $u$ in the range $[a_+, T/2]$.

The counting number
\[
 Z:=\#\{j\le n_0:\mathbf D_j=\mathbf D_*\}
\]
has the binomial distribution with parameters $n_0,p_*$ (thus has mean $p_*n_0$), and
\[
 N_*(\tau):=\#\{j\le\tau:\mathbf D_j=\mathbf D_*\}\ge Z
\]
since $\tau\ge n_0$. Using the Chernoff bounds for binomial lower-tails we can get
\[
 \Pp\{Z<p_*n_0/2\}\le e^{-p_*n_0/8}.
\]
Then by choosing $c_0=p_*/(8a_+)$ and using $n_0\ge T/4a_+$ we get
\begin{equation}\label{eq:many-mixing-blocks}
 \Pp\{N_*(\tau)<c_0T\}\le e^{-\frac{c_0}{4}T}.
\end{equation}

The stopped words $w=W_1\cdots W_\tau$ form a finite complete
prefix code: every infinite symbolic sequence has exactly one
such prefix. If we iterate the self-similarity $\mu=\sum_i p_i(S_i)_*\mu$ along this finite tree, we have the identity:
\[
 \mu=\sum_{w\text{ stopped}}p_w(S_w)_*\mu.
\]
Applying the triangle inequality and the Gaussian change of variables
used in deriving \eqref{eq:recursion}, with $r_w$ in place of $\rho^n$, we obtain
\begin{equation}\label{eq:stopped-recursion}
 U_B(V)\le\sum_{w\text{ stopped}}p_wU_{Br_w}(O_w^{-1}V)
 =\E\,U_R(O_{W_1\cdots W_\tau}^{-1}V).
\end{equation}
Note that here $R$ and $\tau$ are random. To estimate this expectation on the right-hand side, fix a type sequence
$(\mathbf d_1,\ldots,\mathbf d_n)$ for which
\[
 \sum_{j=1}^{n-1}a(\mathbf d_j)<T-u
 \le\sum_{j=1}^{n}a(\mathbf d_j).
\]
On the event $\{\mathbf D_j=\mathbf d_j,\ 1\le j\le n\}$,
the stopping time is necessarily $n$, and $R$ is fixed.
Conditioning additionally on $\tau=n$ therefore imposes no further
restriction. The block rotations still have their independent conditional
laws, so
\[
 \E\!\left[
 U_R(O_{W_1\cdots W_\tau}^{-1}V)
 \,\middle|\,
 \tau=n,\ \mathbf D_j=\mathbf d_j,\ 1\le j\le n
 \right]=(P_{\mathbf d_1}\cdots P_{\mathbf d_n}U_R)(V).
\]
Here the operator order agrees with
$O_{W_1\cdots W_n}^{-1}=O_{W_n}^{-1}\cdots O_{W_1}^{-1}$.

Let $c_1>0$ denote the exponential constant in
Lemma~\ref{lem:count-mixing}. On the event $N_*(\tau)\ge c_0T$, by
lemma~\ref{lem:count-mixing}, \eqref{eq:stopped-scale}, and the three estimates for $U_R$, there is a constant $C>0$ such that
\begin{align*}
 (P_{\mathbf D_1}\cdots P_{\mathbf D_\tau}U_R)(V)
 &\le CR^{-\alpha}
      +CR e^{-c_1(c_0T)^{1/3}}\\
 &\le Ce^{\alpha a_+}e^{-\alpha u}+Ce^{u-c_2T^{1/3}},
\end{align*}
where $c_2=c_1c_0^{1/3}>0$.
These constants are independent of $T$, $u$, and the stopped type
sequence. On the complementary event, the conditional expectation
is at most $2$. Averaging in \eqref{eq:stopped-recursion} and
absorbing $e^{\alpha a_+}$ into $C$, as well as using \eqref{eq:many-mixing-blocks}, we obtain
\[
 \sup_VU_{e^T}(V)
 \le Ce^{-\alpha u}+Ce^{u-c_2T^{1/3}}+2e^{-\frac{c_0}{4}T}.
\]

Finally, choose
\[
 \gamma=\frac{c_2}{1+\alpha}>0,\qquad u=\gamma T^{1/3}.
\]
This satisfies $a_+\le u\le T/2$ for all $T$ such that $a_+\le\gamma T^{1/3}\le T/2$, that is
\[
T\ge T_0:= \max\{(2\gamma)^{3/2},(a_+/\gamma)^3\}.
\]
Also we have
\[
c:=c_2-\gamma=\alpha\gamma>0.
\]
Therefor the first two terms have the same decay
$e^{-c T^{1/3}}$, while the third decays faster. Hence
\[
 \sup_VU_B(V)\le (2C+2)e^{-c(\log B)^{1/3}}
\]
for all $B\ge e^{T_0}$ such that $e^{-c(\log B)^{1/3}}\ge e^{-(c_0/4)\log B}$, that is $B\ge \max\{e^{T_0}, e^{(4c/c_0)^{3/2}}\}$. Increasing $C$, using $U_B\le2$,
covers the remaining range in $B\ge1$.
\end{proof}

\subsection{Conclusion of the absolute-continuity argument}
Proposition~\ref{prop:unequal-increment} provides precisely the
smoothing estimate used in Section~\ref{sec:smoothing}.
The subsequent construction of the densities, the approximation
bound \eqref{eq:main-rate}, and the tail bound \eqref{eq:main-tail}
use no further assumption on the contraction ratios, hence remain valid.
This proves the absolute-continuity and density bounds in
Theorem~\ref{thm:energy}.

\section{Proofs of Corollary \ref{cor:bound} and Corollary \ref{cor:entropy}}\label{sec:entropy}

Assume the assumptions in Theorem \ref{thm:energy} hold. For $V\in\Gr(d,k)$ let $f_V$ be the density function.

\subsection{Uniform logarithmic integrability} To obtain Corollary~\ref{cor:bound}, we only need to use the tail estimate \eqref{eq:main-tail}. For $q>0$, write
\[
(\log^+f_V(y))^q
=\int_0^{\log^+f_V(y)}q u^{q-1}\dd u.
\]
By using \eqref{eq:main-tail} this implies
\begin{align*}
\int_V f_V(\log^+f_V)^q\dd\LL_V^k
&=q\int_0^\infty u^{q-1}\mu_V\{f_V>e^u\}\dd u\\
&\le Cq\int_{0}^\infty u^{q-1}e^{-c_2u^{1/3}}\dd u
<\infty.
\end{align*}
This bound is independent of $V$, and it implies the uniform integrability.

\subsection{Uniform entropy bound}
We use the cube $J$, the normalized Lebesgue measure
$\lambda_J$, and the dyadic partitions introduced before
Corollary~\ref{cor:entropy}. The relative density is $q_V=|J|f_V$.
For $y\in D\in\mathcal D_n(J)$ define
\[
 Z_{V,n}(y)=\frac{\mu_V(D)}{\lambda_J(D)}
           =2^{kn}\mu_V(D).
\]
It equals to the $\lambda_J$-average of $q_V$ on $D$. Since $\lambda_J(D)=2^{-kn}$, we have
\begin{align*}
 \int_J Z_{V,n}\log Z_{V,n}\dd\lambda_J
 &=\sum_{D\in\mathcal D_n(J)}\mu_V(D)
                         \log(2^{kn}\mu_V(D))\\
 &=kn\log2-H(\mu_V,\mathcal D_n(J)),
\end{align*}
where a cell $D$ of zero mass contributes zero in the sum. Then by using the convexity of $u\mapsto u\log u$ we get
\begin{align*}
 0\le\int_J Z_{V,n}\log Z_{V,n}\dd\lambda_J
 &\le\int_J q_V\log q_V\dd\lambda_J\\
 &=\log|J|+\int_J f_V\log f_V\dd y\\
 &\le\log|J|+\int_J f_V\log^+f_V\dd y:= C<\infty.
\end{align*}
The last bound comes from Corollary~\ref{cor:bound} with $q=1$.

\section{Dimension conservation for measures under strong separation}\label{sec:conditional}
We now work under the general IFS \eqref{eq:ifs} satisfying \SSC, and suppose that every projected measure $\mu_V$ has a
density $f_V$ satisfying \eqref{eq:main-tail}, as proved in
Sections~\ref{sec:smoothing} and~\ref{sec:unequal} under the hypotheses
of Theorem~\ref{thm:energy}. Recall the notation $h(p)$ and $\chi(p)$, and denote by $s_\mu=h(p)/\chi(p)$ the exact dimension of $\mu$.

\subsection{The conditional-cylinder formula}\label{subsec:conditional-cylinders}
Fix $V$ and a disintegration as in \eqref{eq:disintegration},
with the conditional measures defined on a Borel set $B_0\subset V$
of full $\mu_V$-measure and supported on
$A\cap\pi_V^{-1}\{y\}$ for $y\in B_0$.
For a finite word $w$, recall the notations $A_w=S_wA$ and $V_w=O_w^{-1}V$.
As in Section~\ref{subsec:self-similarity-mixing}, define the similarity
$S_{V,w}:V_w\to V$ by
\[
 S_{V,w}(z)=\pi_Va_w+r_wO_wz,\qquad z\in V_w,
\]
and let
\[
 z_w(y)=S_{V,w}^{-1}(y)
       =r_w^{-1}O_w^{-1}(y-\pi_Va_w)\in V_w.
\]
The following lemma is the multidimensional form of the identity
used in \cite[Section~5]{Rapaport20}.
\begin{lemma}\label{lem:conditional}
For $\mu_V$-almost every $y$, simultaneously for all finite words $w$,
\begin{equation}\label{eq:conditional}
 \mu_{V,y}(A_w)=
 p_wr_w^{-k}\frac{f_{V_w}(z_w(y))}{f_V(y)}.
\end{equation}
\end{lemma}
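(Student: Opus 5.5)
The plan is to identify the measure $E\mapsto\mu(A_w\cap\pi_V^{-1}E)$ on $V$ in two ways, once through the disintegration and once through self-similarity, and then compare Radon--Nikodym derivatives.

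By \eqref{eq:disintegration}, for every Borel $E\subset V$,
\[
 \mu(A_w\cap\pi_V^{-1}E)=\int_E\mu_{V,y}(A_w)\dd\mu_V(y)=\int_E\mu_{V,y}(A_w)f_V(y)\dd y .
\]
On the other hand, \SSC\ and \eqref{eq:restriction} give $\mu|_{A_w}=p_w(S_w)_*\mu$. Combined with $(\pi_V\circ S_w)_*\mu=(S_{V,w})_*\mu_{V_w}$ from Section~\ref{subsec:self-similarity-mixing} (with $r_w$ in place of $\rho^n$), this yields
\[
 (\pi_V)_*(\mu|_{A_w})=p_w(S_{V,w})_*\bigl(f_{V_w}\LL^k_{V_w}\bigr).
\]
Under the change of variables $y=S_{V,w}(z)$ we have $\dd y=r_w^k\dd z$. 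Hence the push-forward has density
\[
 y\mapsto p_wr_w^{-k}f_{V_w}(z_w(y))
\]
with respect to $\LL_V^k$. Both expressions are densities of the same measure, so for each fixed $w$ they agree for $\LL_V^k$-almost every $y$:
\[
 \mu_{V,y}(A_w)f_V(y)=p_wr_w^{-k}f_{V_w}(z_w(y)).
\]

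Next I would divide by $f_V(y)$. The set $\{f_V=0\}$ is $\mu_V$-null, so the identity \eqref{eq:conditional} holds $\mu_V$-almost everywhere for each fixed $w$. Since there are only countably many finite words, the union of the exceptional null sets is still null, and the identity holds simultaneously for all $w$ at $\mu_V$-almost every $y$.

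The step requiring care is representatives. The densities $f_{V_w}$ are only defined up to null sets on the planes $V_w$. Composing with the similarity $z_w$ maps Lebesgue-null sets to Lebesgue-null sets, so the right-hand side is well defined up to $\LL_V^k$-null sets. These are also $\mu_V$-null, because $\mu_V\ll\LL_V^k$. I would fix Borel representatives of $f_V$ and of each $f_{V_w}$ (countably many planes) once and for all. The statement is then read for these fixed choices, consistent with the remark in Section~\ref{subsec:disintegration} that carrying sets can be chosen for countably many planes simultaneously.
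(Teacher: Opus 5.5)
Your proposal is correct and follows essentially the same route as the paper. Both arguments compute the measure $E\mapsto\mu(A_w\cap\pi_V^{-1}E)$ once through the disintegration and once through $\mu|_{A_w}=p_w(S_w)_*\mu$ with the change of variables under $S_{V,w}$, compare Radon--Nikodym derivatives, divide by $f_V$ off a $\mu_V$-null set, and discard the countably many exceptional null sets; your remark on fixing Borel representatives of the countably many $f_{V_w}$ matches the paper's own convention.
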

\begin{proof}
Recall that we have, by self-similarity and \SSC,
$\mu|_{A_w}=p_w(S_w)_*\mu$.
By the change of variables on the two $k$-planes, the density of
$(\pi_V)_*(\mu|_{A_w})$ is
\[
 y\longmapsto p_wr_w^{-k} f_{V_w}(z_w(y)).
\]
For every Borel $B\subset V$, we may write
\[
 \mu(A_w\cap\pi_V^{-1}B)
 =\int_{B\cap B_0}\mu_{V,y}(A_w)f_V(y)\dd\LL_V^k(y).
\]
Thus we may deduce from the uniqueness of Radon-Nikodym derivatives that
\[
 \mu_{V,y}(A_w)f_V(y)=p_wr_w^{-k}f_{V_w}(z_w(y))
 \quad\text{for }\LL_V^k\text{-almost every }y\in B_0.
\]
Since $f_V$ is a probability density, its zero set has
$\mu_V$-measure zero and it is finite $\mu_V$-almost
everywhere. We may therefore divide by $f_V(y)$ on a set
of full $\mu_V$-measure.
By removing the countable union of the exceptional null sets we verify
the simultaneously in finite words claim. Take the sum over
the level-$n$ partition of $A$ we also have
\[
 \sum_{|w|=n}p_wr_w^{-k}f_{V_w}(z_w(y))=f_V(y)
 \quad\text{for }\mu_V\text{-almost every }y.
\]
Thus the conditional cylinder masses sum to one at every level
on the same set of full $\mu_V$-measure.
\end{proof}

\subsection{Sublinear growth of the density term}

Let us now study the growth of the density term in \eqref{eq:conditional} by realising it as a random variable. Fix $V\in\Gr(d,k)$. We work on the Bernoulli space
$(\Omega,\Pp)=(\mathcal I^{\N},p^{\N})$ from
Section~\ref{sec:prelim}, and write
$\omega=(\omega_1,\omega_2,\ldots)$ for a random sequence.
For $n\geq1$, let
\[
w_n=\omega|_n,\qquad
V_n=O_{w_n}^{-1}V,\qquad
X_n=\Pi(\sigma^n\omega).
\]
Thus $w_n$ is the first $n$ symbols, $V_n$ is the corresponding
rotated projection plane, and $X_n$ is the point coded by the
remaining symbols. In particular,
$\Pi(\omega)=S_{w_n}(X_n)$. The density term that appears in the conditional-cylinder formula above \eqref{eq:conditional} is the random variable:
\[
Y_n=f_{V_n}\bigl(\pi_{V_n}X_n\bigr),
\]
Here note that $Y_n$ is measurable since for fixed $V$, only
countably many planes $V_n$ can occur, so we choose a Borel
representative of each corresponding density, equal to zero outside
its projected attractor.

We show that $\log Y_n=o(n)$ almost surely:

\begin{lemma}\label{lem:sublinear}
For every fixed $V$,
\begin{equation}\label{eq:sublinear}
 \lim_{n\to\infty}\frac{\log Y_n}{n}=0
 \quad \Pp\text{-almost surely}.
\end{equation}
\end{lemma}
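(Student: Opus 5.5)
The plan is to control the upper and lower tails of $\log Y_n$ separately, using the exact distribution of $Y_n$, and then to apply Borel--Cantelli in each direction. The key observation is a conditional distribution identity. Under $\Pp$, the prefix $w_n=\omega|_n$ is independent of the tail $\sigma^n\omega$, which again has law $\Pp$. So, conditionally on $w_n=w$, the point $X_n=\Pi(\sigma^n\omega)$ has law $\Pi_*\Pp=\mu$. Since $V_n=O_w^{-1}V$ is then fixed, $\pi_{V_n}X_n$ has law $\mu_{V_n}$, which by Theorem~\ref{thm:energy} (the absolute-continuity part, proved in Sections~\ref{sec:smoothing} and~\ref{sec:unequal}) equals $f_{V_n}\LL_{V_n}^k$. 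Hence for every Borel $E\subset[0,\infty)$,
\[
 \Pp\{Y_n\in E\mid w_n=w\}=\int_{\{f_{V_n}\in E\}}f_{V_n}\dd\LL_{V_n}^k .
\]
Only countably many planes $V_n$ occur and we have fixed Borel representatives, so this is legitimate. We then average over $w$.

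\emph{Upper tail.} Fix $\varepsilon>0$ and take $E=(e^{\varepsilon n},\infty)$. The uniform tail bound \eqref{eq:main-tail} gives, uniformly in the plane $V_n$,
\[
 \Pp\{Y_n>e^{\varepsilon n}\}\le \sup_{W\in\Gr(d,k)}\int_{\{f_W>e^{\varepsilon n}\}}f_W\dd\LL_W^k\le Ce^{-c\varepsilon^{1/3}n^{1/3}} .
\]
This is summable in $n$, since $\sum_n e^{-c' n^{1/3}}<\infty$. By Borel--Cantelli, almost surely $\log Y_n\le\varepsilon n$ for all large $n$, and so $\limsup_n n^{-1}\log Y_n\le\varepsilon$. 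This step is where the uniformity in $V$ of \eqref{eq:main-tail} is essential, because the plane $V_n$ wanders over $\Gr(d,k)$; I expect it to be the only genuinely nontrivial input.

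\emph{Lower tail.} Take $E=[0,e^{-\varepsilon n})$. The representative of $f_W$ vanishes outside $\pi_WA\subset B_W(0,D_A)$, so
\[
 \Pp\{Y_n<e^{-\varepsilon n}\}\le \sup_W\int_{\{f_W<e^{-\varepsilon n}\}\cap B_W(0,D_A)}f_W\dd\LL_W^k\le e^{-\varepsilon n}\,\LL^k(B(0,D_A)) .
\]
This is again summable. In particular $Y_n>0$ almost surely (the case $E=\{0\}$ has probability zero), so $\log Y_n$ is well defined. Borel--Cantelli then gives $\liminf_n n^{-1}\log Y_n\ge-\varepsilon$ almost surely.

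Finally, let $\varepsilon\downarrow0$ along a countable sequence and intersect the resulting full-measure events; this yields \eqref{eq:sublinear}. The argument does not require independence between different $n$, since Borel--Cantelli only needs summability. It also does not use \SSC; that hypothesis enters only later, when the lemma is combined with Lemma~\ref{lem:conditional}.
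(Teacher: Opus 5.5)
Your proposal is correct and follows the paper's proof essentially step by step. Both arguments use the same ingredients: given $w_n=w$, the point $X_n$ has law $\mu$; the uniform tail bound \eqref{eq:main-tail} gives a summable bound for $\Pp(Y_n>e^{\varepsilon n})$; the chosen representative vanishes outside $\pi_{V_w}A\subset B_{V_w}(0,D_A)$, which gives a summable bound for $\Pp(Y_n<e^{-\varepsilon n})$; and Borel--Cantelli is applied along a countable sequence $\varepsilon\downarrow0$.
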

\begin{proof}
The prefix $w_n$ depends only on the first $n$ coordinates,
whereas $X_n$ depends only on the remaining coordinates.
Hence $X_n$ is independent of $w_n$ and has law $\mu$.
Conditional on $w_n=w$, the plane $V_n=V_w$ is fixed and
$X_n$ still has law $\mu$.
In particular $0<Y_n<\infty$ almost surely for each $n$,
because any density is positive and finite almost everywhere
with respect to the measure it defines. By countability this
holds simultaneously for all $n$.
Fix $\varepsilon>0$. For every $n\ge 0$ and every word $w$ of length $n$, by \eqref{eq:main-tail},
\[
 \Pp(Y_n>e^{\varepsilon n}\mid w_n=w)
 =\int_{\{f_{V_w}>e^{\varepsilon n}\}}f_{V_w}\dd\LL_{V_w}^k
 \le C e^{-c(\varepsilon n)^{1/3}}.
\]
For the lower tail, write $v_k=\LL^k(B_{\R^k}(0,1))$ and
recall that $A\subset B(0,D_A)$.
Thus $\pi_{V_w}A\subset B_{V_w}(0,D_A)$, a ball of volume
$v_kD_A^k$, independently of the prefix. Therefore we have:
\[
 \Pp(Y_n<e^{-\varepsilon n}\mid w_n=w)
 =\int_{\pi_{V_w}A\cap\{f_{V_w}<e^{-\varepsilon n}\}}
          f_{V_w}\dd\LL_{V_w}^k
 \le v_kD_A^k e^{-\varepsilon n}.
\]
By the law of total probability we have
\[
 \Pp(Y_n>e^{\varepsilon n})
 =\sum_{|w|=n}p_w\Pp(Y_n>e^{\varepsilon n}\mid w_n=w)
 \le Ce^{-c(\varepsilon n)^{1/3}}.
\]
The same argument implies the unconditional lower-tail bound.
Both are summable. Hence by using the first Borel-Cantelli lemma we deduce that
$e^{-\varepsilon n}\le Y_n\le e^{\varepsilon n}$
eventually almost surely. Taking a countable
$\varepsilon$ tending to zero we obtain \eqref{eq:sublinear}.
\end{proof}

\subsection{From cylinder masses to local dimensions}

We first recall the standard comparison between cylinders and balls under \SSC.

\begin{lemma}\label{lem:balls}
Suppose $m\geq2$ and \SSC\ holds. Set
\[
\Delta=\min_{i\neq j}\operatorname{dist}(S_iA,S_jA)>0.
\]
For $x=\Pi\omega$, let $A_n(x)=A_{\omega|n}$ and
$r_n(x)=r_{\omega|n}$. Then
\begin{equation}\label{eq:ball-cyl}
B(x,(\Delta/2)r_n(x))\cap A
\subset A_n(x)\subset B(x,2D_A r_n(x)).
\end{equation}
If $\nu$ is a probability measure supported on $A$ and
\[
\lim_{n\to\infty}\frac{\log\nu(A_n(x))}{\log r_n(x)}=a<\infty,
\]
then
\[
\lim_{r\downarrow0}\frac{\log\nu(B(x,r))}{\log r}=a.
\]
\end{lemma}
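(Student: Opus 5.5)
The plan is to prove the two inclusions in \eqref{eq:ball-cyl} first, and then compare cylinders with balls through a two-sided sandwich in which the ratios $r_{n+1}(x)/r_n(x)$ stay bounded between fixed constants.

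\emph{Inclusions.} For the right inclusion, note that $x\in A_n(x)=S_{\omega|n}A$ and $\diam A\le 2D_A$. Hence $\diam A_n(x)\le 2D_A r_n(x)$, and $A_n(x)\subset B(x,2D_Ar_n(x))$ (use a closed ball, or enlarge the constant slightly if an open ball is needed).

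For the left inclusion, take $z\in A\setminus A_n(x)$. Let $j<n$ be the largest index with $z\in A_{\omega|j}$; this exists since $A_{\omega|0}=A$. Then $z$ and $x$ lie in distinct level-$(j+1)$ subcylinders $S_{\omega|j}S_iA$ and $S_{\omega|j}S_{\omega_{j+1}}A$. Because $S_{\omega|j}$ is a similarity with ratio $r_{\omega|j}$, we get $|z-x|\ge \Delta r_{\omega|j}\ge \Delta r_{\omega|n}$. Therefore every point of $A$ within distance $<\Delta r_n(x)$ of $x$, in particular within $(\Delta/2)r_n(x)$, lies in $A_n(x)$.

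\emph{Local dimension.} Let $r_{\min}=\min_i r_i>0$ and $r_{\max}=\max_i r_i<1$, so that
\[
r_{\min}r_n(x)\le r_{n+1}(x)\le r_{\max}r_n(x),
\]
and $r_n(x)\to0$ strictly monotonically.

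Given small $r>0$, choose the $n$ with $2D_Ar_n(x)\le r<2D_Ar_{n-1}(x)$. Since $\nu$ is supported on $A$, the right inclusion gives $\nu(B(x,r))\ge\nu(A_n(x))$.

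Similarly, choose the $n'$ with $(\Delta/2)r_{n'}(x)\le r<(\Delta/2)r_{n'-1}(x)$. Then $B(x,r)\cap A\subset B(x,(\Delta/2)r_{n'-1}(x))\cap A\subset A_{n'-1}(x)$, so $\nu(B(x,r))\le\nu(A_{n'-1}(x))$.

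By the ratio bound, $\log r=\log r_n(x)+O(1)=\log r_{n'-1}(x)+O(1)$, and $|\log r|\to\infty$. Dividing by $\log r<0$ reverses the inequalities, giving
\[
\frac{\log\nu(A_{n'-1}(x))}{\log r}\le\frac{\log\nu(B(x,r))}{\log r}\le\frac{\log\nu(A_n(x))}{\log r}.
\]
Both outer ratios converge to $a$ because $\log r_n(x)/\log r\to1$. If $\nu(A_n(x))=0$ for some $n$, the limit hypothesis already forces $a=\infty$, which is excluded, so the logarithms are finite.

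The only delicate step is the left inclusion. It needs a similarity argument at the first level where the codings of $z$ and $x$ split, and this is where \SSC\ gives a uniform gap $\Delta$. Note that $\Delta>0$ since the $S_iA$ are disjoint compact sets and $m\ge2$. The rest is routine bookkeeping with bounded scale ratios.
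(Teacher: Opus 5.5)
Your proof is correct and takes essentially the same approach as the paper. The paper gives no argument of its own: it cites Falconer's book, using the proof of Proposition~9.7 for the separation argument behind the inclusions and the proof of Lemma~17.5 for the cylinder--ball comparison of local dimensions, and you have written out exactly these standard arguments in full (your left inclusion even holds with $\Delta r_n(x)$ in place of $(\Delta/2)r_n(x)$).
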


\begin{proof}
The inclusions in \eqref{eq:ball-cyl} follow from the separation
argument in \cite[proof of Proposition~9.7]{Falconer14}. The
conclusion for local dimensions is the usual cylinder-ball
comparison, see \cite[proof of Lemma~17.5]{Falconer14}.
\end{proof}

Fix $V\in\Gr(d,k)$. Write $x=\Pi(\omega)$ and $y=\pi_Vx$.
Recall that
\[
x=S_{w_n}(X_n),\qquad
z_{w_n}(y)=\pi_{V_n}X_n.
\]
Hence Lemma~\ref{lem:conditional} implies for $\Pp$-almost every
$\omega$,
\[
\log\mu_{V,y}(A_{w_n})
=
\log p_{w_n}-k\log r_{w_n}
+\log Y_n-\log f_V(y).
\]
By the strong law of large numbers,
\[
\frac{\log p_{w_n}}{n}\to-h(p),
\qquad
\frac{\log r_{w_n}}{n}\to-\chi(p)
\quad\text{almost surely}.
\]
By Lemma~\ref{lem:sublinear}, $\log Y_n=o(n)$ almost surely, while
$\log f_V(y)$ is finite and does not depend on $n$. Therefore
\begin{equation}\label{eq:cyl-dim}
\frac{\log\mu_{V,\pi_Vx}(A_n(x))}{\log r_n(x)}
\to
\frac{h(p)}{\chi(p)}-k=s_\mu-k
\end{equation}
for $\mu$-almost every $x$.

We now pass from this $\mu$-almost everywhere statement to the
conditional measures. Let $B_0\subset V$ be the full $\mu_V$-measure
set on which the disintegration $y\mapsto\mu_{V,y}$ is defined.
Given
$x\in A\cap\pi_V^{-1}(B_0)$ define
\[
M_n(x)
=
\sum_{|w|=n}
\ind_{A_w}(x)\mu_{V,\pi_Vx}(A_w).
\]
Under \SSC, exactly one level-$n$ cylinder contains $x$, so $ M_n(x)=\mu_{V,\pi_Vx}(A_n(x)).$
Similarly,
$
r_n(x)=\sum_{|w|=n}\ind_{A_w}(x)r_w.
$
Both functions are Borel. Hence the set
\[
E_V:=
\left\{
x\in A\cap\pi_V^{-1}(B_0):
\lim_{n\to\infty}\frac{\log M_n(x)}{\log r_n(x)}
= s_\mu-k
\right\}
\]
is Borel measurable. By \eqref{eq:cyl-dim}, $\mu(E_V)=1$. By
\[
1=\mu(E_V)
=
\int_{B_0}\mu_{V,y}(E_V)\,d\mu_V(y),
\]
we get $\mu_{V,y}(E_V)=1$ for $\mu_V$-almost every $y$. Fix such a $y$. Then for $\mu_{V,y}$-almost every $x$,
\[
\lim_{n\to\infty}
\frac{\log\mu_{V,y}(A_n(x))}{\log r_n(x)}
=s_\mu-k.
\]
Applying Lemma~\ref{lem:balls} to the measure $\nu=\mu_{V,y}$ implies
\[
\lim_{r\downarrow0}
\frac{\log\mu_{V,y}(B(x,r))}{\log r}
=s_\mu-k
\quad\text{for }\mu_{V,y}\text{-almost every }x.
\]
Thus $\mu_{V,y}$ is exact dimensional of dimension $s_\mu-k$ for
$\mu_V$-almost every $y$.

\subsection{Dimension conservation for measures}

Under \SSC, the self-similar measure $\mu$ is exact dimensional and
\[
\dimH\mu=\frac{h(p)}{\chi(p)}=s_\mu,
\]
see \cite[Section~17.3]{Falconer14}. Since
$\mu_V=f_V\,\mathcal L_V^k$, the Lebesgue differentiation theorem implies
\[
\mu_V(B_V(y,r))
\sim f_V(y)v_kr^k
\quad\text{as }r\downarrow0
\]
for $\mu_V$-almost every $y$, with $0<f_V(y)<\infty$. Hence
$\dimH\mu_V=k$. Together with the conditional dimension formula we get
\[
\dimH\mu
=
k+(s_\mu-k)
=
\dimH\mu_V+\dimH\mu_{V,y}
\]
for $\mu_V$-almost every $y$. Thus we have proved \eqref{eq:dc-measure}.

\section{Equivalence and positive projection volumes}\label{sec:equivalence}\label{sec:volumes}
We adapt \cite[Section~7]{Rapaport20} to show that uniform
integrability and self-similarity imply equivalence of $\mu_V$
with Lebesgue measure restricted to $\pi_VA$. This allows us to move statements holding
$\mu_V$-almost everywhere to Lebesgue-almost every
point of $\pi_VA$.

The following consequence of the Lebesgue density theorem
\cite[Corollary~2.14(1)]{Mattila95} will be used in
Proposition~\ref{prop:equivalence} and the proof of Theorem~\ref{thm:sets}.
\begin{lemma}\label{lem:density}
Suppose $\diam A>0$.
Fix $V\in\Gr(d,k)$, and let $E\subset\pi_VA$ be Borel.
If there is a constant $a>0$ such that
\[
 \LL_V^k\bigl((\pi_VS_wA)\setminus E\bigr)\ge a r_w^k
\]
for every finite word $w$, then $\LL_V^k(E)=0$.
\end{lemma}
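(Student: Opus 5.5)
The plan is a contradiction argument built on the Lebesgue density theorem. The hypothesis forces every projected cylinder to contain a definite proportion of points outside $E$, and that proportion persists at all scales.

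Suppose $\LL_V^k(E)>0$. By the Lebesgue density theorem \cite[Corollary~2.14(1)]{Mattila95}, there is a point $y_0\in E$ of density one:
\[
\frac{\LL_V^k(E\cap B_V(y_0,r))}{v_kr^k}\to1 \qquad (r\downarrow0).
\]
Since $E\subset\pi_VA$, we can write $y_0=\pi_Vx_0$ for some $x_0=\Pi(\omega)\in A$.

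For each $n$, let $w=\omega|_n$. Then $x_0\in S_wA$, so $y_0\in\pi_VS_wA$. Since $\diam S_wA=r_w\diam A\le 2D_Ar_w$, we get
\[
\pi_VS_wA\subset B_V(y_0,2D_Ar_w).
\]
The hypothesis then gives
\[
\LL_V^k\bigl(B_V(y_0,2D_Ar_w)\setminus E\bigr)\ge\LL_V^k\bigl((\pi_VS_wA)\setminus E\bigr)\ge a r_w^k.
\]
Put $r=2D_Ar_w$. The right-hand side is then $a(2D_A)^{-k}r^k$, so the relative measure of the complement of $E$ in $B_V(y_0,r)$ is at least $a(2D_A)^{-k}/v_k>0$.

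As $n\to\infty$ we have $r_w\to0$, so these balls shrink to $y_0$. The lower bound on the complement therefore contradicts density one at $y_0$. Hence $\LL_V^k(E)=0$.

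The main subtlety is only to make sure the cylinders shrink around a point that genuinely lies in their projections. This is why I choose the coding $\omega$ of a preimage $x_0$ rather than arbitrary words. Note also that $\diam A>0$ is not actually needed for this direction: it only guarantees the hypothesis is nonvacuous. Measurability of $E$ is given, so the density theorem applies directly.
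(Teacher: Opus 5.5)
Your proof is correct and is essentially the paper's argument. Both code a preimage of a point of $E$, note that the projected cylinders along that coding lie in balls of radius $2D_Ar_w$ about the point, and contradict the Lebesgue density theorem. The only cosmetic difference is that you argue by contradiction at a single density point, whereas the paper shows directly that no point of $E$ is a density point.
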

\begin{proof}
Fix $y\in E$ and choose $x\in A$ with $\pi_Vx=y$.
Choose a coding $\omega=(\omega_1,\omega_2,\ldots)$ of $x$, and let
$w_n=\omega_1\cdots\omega_n$ be its first $n$ symbols. Since $x\in S_{w_n}A$ and orthogonal projection is $1$-Lipschitz,
\[
 \pi_VS_{w_n}A\subset B_V(y,2D_A r_{w_n}).
\]
Recall that $v_k=\LL^k(B_{\R^k}(0,1))$. From the assumption we have:
\[
 \frac{\LL_V^k(B_V(y,2D_A r_{w_n})\setminus E)}
      {\LL_V^k(B_V(y,2D_A r_{w_n}))}
 \ge\frac{a}{v_k(2D_A)^k}>0.
\]
Since $r_{w_n}\to0$, the above bound shows that $y$ is not
a Lebesgue density point of $E$. This holds for every $y\in E$,
whereas the Lebesgue density theorem implies density one at
$\LL_V^k$-almost every point of $E$. Thus $\LL_V^k(E)=0$.
\end{proof}

\begin{proposition}\label{prop:equivalence}
Suppose the projections of a self-similar measure $\mu$ have densities $f_V$ for all
$V\in\Gr(d,k)$, and
\[
 \lim_{M\to\infty}\sup_V\int_{\{f_V>M\}}f_V\dd\LL_V^k=0.
\]
Then $\mu_V\sim\LL_V^k|_{\pi_VA}$ for every $V$.
\end{proposition}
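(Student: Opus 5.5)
Since $\mu_V=f_V\LL_V^k$ and $\supp\mu_V\subset\pi_VA$, the inclusion $\mu_V\ll\LL_V^k|_{\pi_VA}$ is automatic. The whole content is the converse: the set $E:=\{y\in\pi_VA: f_V(y)=0\}$, for a Borel representative of $f_V$, must have $\LL_V^k(E)=0$. The plan is to obtain this from Lemma~\ref{lem:density}. We must find $a>0$ with $\LL_V^k((\pi_VS_wA)\setminus E)\ge ar_w^k$ for every finite word $w$. (If $\diam A=0$ then $A$ is a point, which contradicts the existence of densities, so $\diam A>0$.)

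\textbf{Step 1: a uniform lower bound on the support of each projection.} Uniform integrability gives a uniform lower bound on $\LL_W^k(\{f_W>0\})$. Choose $M$ with $\sup_W\int_{\{f_W>M\}}f_W\le 1/2$. Then
\[
 1/2\le\int_{\{0<f_W\le M\}}f_W\le M\,\LL_W^k(\{f_W>0\}),
\]
so $\LL_W^k(\{f_W>0\})\ge 1/(2M)=:a_0$ for every $W\in\Gr(d,k)$. Since $f_W$ vanishes outside $\pi_WA$, the set $\{f_W>0\}$ lies in $\pi_WA$.

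\textbf{Step 2: transfer to cylinders via self-similarity.} We have $\mu\ge p_w(S_w)_*\mu$ without any separation assumption. Projecting and changing variables exactly as in the proof of Lemma~\ref{lem:conditional} gives
\[
 f_V(y)\ge p_wr_w^{-k}f_{V_w}(z_w(y))\quad\text{for $\LL_V^k$-a.e. }y.
\]
Hence, up to a null set, $S_{V,w}(\{f_{V_w}>0\})\subset\{f_V>0\}$. This image also lies in $\pi_VS_wA$, because $S_{V,w}\pi_{V_w}=\pi_VS_w$. Since $S_{V,w}$ scales $k$-volume by $r_w^k$, we get
\[
 \LL_V^k((\pi_VS_wA)\setminus E)\ge r_w^k\,\LL_{V_w}^k(\{f_{V_w}>0\})\ge a_0r_w^k.
\]
Lemma~\ref{lem:density} then yields $\LL_V^k(E)=0$, so $\LL_V^k|_{\pi_VA}\ll\mu_V$.

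\textbf{Main obstacle.} The only delicate point is the almost-everywhere comparison of densities in Step~2. The inequality $\mu\ge p_w(S_w)_*\mu$ is an inequality of measures. Projecting it gives the density inequality only almost everywhere, and the representatives must be handled so that the inclusion of positivity sets holds up to $\LL_V^k$-null sets. For fixed $V$ only countably many $w$ occur, so all exceptional null sets can be discarded at once. Uniformity in $W$ in Step~1 is essential, since $V_w$ ranges over infinitely many planes.
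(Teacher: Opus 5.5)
Your proposal is correct and follows the paper's proof: a uniform lower bound $a_0=1/(2M)$ from uniform integrability, transfer to cylinders through $\mu\ge p_w(S_w)_*\mu$ and the volume factor $r_w^k$ of $S_{V,w}$, and then Lemma~\ref{lem:density}. The only difference is that the paper takes an arbitrary $\mu_V$-null Borel set $E\subset\pi_VA$ and pulls it back using the measure-level relation $(S_{V,w})_*\mu_{V_w}\ll\mu_V$, rather than comparing density representatives almost everywhere, so the obstacle you flag does not arise there.
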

\begin{proof}
By the assumption, we may choose $M_0>0$ such that for every $V\in\Gr(d,k)$,
\[
\int_{\{f_V>M_0\}} f_V\,d\mathcal L_V^k\le \frac12.
\]
For every $V\in\Gr(d,k)$ and every Borel set $E\subset \pi_VA$
with $\mu_V(E)=0$, we have
\[
 1=\int_{\pi_VA\setminus E}f_V\dd\LL_V^k
 \le M_0\LL_V^k(\pi_VA\setminus E)+\frac12.
\]
Thus
\begin{equation}\label{eq:carrier}
 \LL_V^k(\pi_VA\setminus E)\ge a_0:=\frac1{2M_0}>0.
\end{equation}

Fix $V,E$ as above. For each finite word $w$, use $V_w$ and
$S_{V,w}$ as defined in Section~\ref{subsec:conditional-cylinders}.
Note that we can always bound from below: $\mu\ge p_w(S_w)_*\mu$ by self-similarity. Therefore
$(S_{V,w})_*\mu_{V_w}\ll\mu_V$.
This implies
\[
 E_w=\{z\in \pi_{V_w}A:S_{V,w}(z)\in E\}
\]
 has $\mu_{V_w}$-measure zero. Use the Jacobian of $S_{V,w}$ then apply \eqref{eq:carrier} to $E_w$:
\[
 \LL_V^k\bigl((\pi_VS_wA)\setminus E\bigr)
 =r_w^k\LL_{V_w}^k(\pi_{V_w}A\setminus E_w)
 \ge a_0r_w^k.
\]
Thus Lemma~\ref{lem:density} implies $\LL_V^k(E)=0$. The reverse absolute continuity was already proved so the two measures are equivalent.
\end{proof}

Under the hypotheses of Theorem~\ref{thm:energy}, the uniform
density-tail bound implies the assumption of Proposition~\ref{prop:equivalence}. Then applying this proposition to the densities constructed in
Sections~\ref{sec:smoothing} and~\ref{sec:unequal} proves the equivalence
claim in Theorem~\ref{thm:energy}. It also implies the
Lebesgue-almost-everywhere conditional-measure statement under \SSC.

Proposition~\ref{prop:equivalence} implies \eqref{eq:carrier} as well. Taking $E=\varnothing$ there we obtain
\[
 \inf_{V\in\Gr(d,k)}\LL_V^k(\pi_VA)\ge a_0>0.
\]
This proves the uniform positive projection-volume lower bound and
completes the proof of Theorem~\ref{thm:energy}.

\section{Self-similar sets}\label{sec:generalsets}
We will use the natural self-similar measures defined on strongly separated subsystems to deduce
Theorem~\ref{thm:sets}.

\subsection{Strongly separated subsystems}
To prove Theorem \ref{thm:sets}, we state the following subsystem approximation
in \cite[Proposition~1.8]{Farkas16}.

\begin{proposition}\label{prop:subsystem}
Under the hypotheses of Theorem~\ref{thm:sets}, write $s=\dimH A$.
For every $0<\varepsilon<s-k$, there is an IFS
\[
 T_i(x)=\lambda_iR_ix+b_i,\qquad 1\le i\le m_\varepsilon,
\]
formed by finite compositions of the original maps, satisfying
\SSC\ and \DRC, and with attractor $A_\varepsilon\subset A$ such that
$s_\varepsilon:=\dimH A_\varepsilon>s-\varepsilon$.
\end{proposition}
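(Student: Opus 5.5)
This is \cite[Proposition~1.8]{Farkas16}, so in the paper we would simply cite it. We would reconstruct the proof in two stages: first build a strongly separated subsystem of large dimension, then modify it so that \DRC{} holds without losing \SSC{} or more than a negligible amount of dimension. Since $s>k\ge1$, the set $A$ is not a point, so $\diam A>0$.

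\emph{Stage 1: separation and dimension.} Fix a small $r>0$. Let $\mathcal W_r$ be the stopping set of words $w$ with $r_w\le r<r_{w^-}$, where $w^-$ is $w$ with its last symbol removed. Then every $w\in\mathcal W_r$ has $r\,r_{\min}<r_w\le r$, and the sets $S_wA$, $w\in\mathcal W_r$, cover $A$. Since the lower box dimension of $A$ is at least $s$, for small $r$ there is a $(4D_A+1)r$-separated set of points of $A$ of cardinality at least $r^{-(s-\varepsilon/4)}$. Choosing for each such point one cylinder $S_wA$, $w\in\mathcal W_r$, containing it gives a family $w_1,\dots,w_N$. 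These cylinders have diameter at most $2D_Ar$, so they are pairwise at distance at least $r$; in particular they are pairwise disjoint. The subsystem $\{S_{w_i}\}$ therefore satisfies \SSC. By Moran's equation, its similarity dimension $s'$ satisfies
\[
 1=\sum_i r_{w_i}^{s'}\ge N(r\,r_{\min})^{s'},
\]
so $s'\ge s-\varepsilon/2$ once $r$ is small. Under \SSC{} the Hausdorff dimension of the attractor equals $s'$, and the attractor lies in $A$.

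\emph{Stage 2: restoring dense rotations.} We would exploit the prefix identity $O_{xu}^{-1}O_{xv}=O_u^{-1}O_v$ from the proof of Lemma~\ref{lem:block}. By Lemma~\ref{lem:block} and the Breuillard--Gelander step in its proof, there are finitely many pairs $(u_\ell,v_\ell)$ of words of a common length $b$ such that the elements $O_{u_\ell}^{-1}O_{v_\ell}$ generate a dense subgroup of $\SO(d)$. The plan is to replace a few of the maps $S_{w_i}$, say $S_{w_1},\dots,S_{w_{2L}}$, by maps $S_{w_iu_\ell}$ and $S_{w_{i'}v_\ell}$ whose rotations realise these generators. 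For example, if $w_1=w_2$ up to a short common prefix $x$, then $S_{xu_\ell}$ and $S_{xv_\ell}$ give $O_{u_\ell}^{-1}O_{v_\ell}$ in the generated group. All other $S_{w_i}$ are kept. Since $S_{wu}A\subset S_wA$, separation between different $w_i$ is preserved. Removing finitely many of the $N\to\infty$ maps, and shrinking the rest by at most the constant factor $r_{\min}^{b}$, changes the Moran sum only slightly, so the dimension stays above $s-\varepsilon$ for small $r$.

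\emph{Main obstacle.} The key difficulty is making the two extensions $xu_\ell$ and $xv_\ell$ genuinely disjoint, that is, making $S_{u_\ell}A\cap S_{v_\ell}A=\varnothing$, while keeping the rotation identity. Without \SSC{} for the original IFS this can fail. The first fix to try is to append a long common suffix $z$ to both words, chosen so that $S_{u_\ell z}A$ and $S_{v_\ell z}A$ are far apart. This works when $S_{u_\ell}$ and $S_{v_\ell}$ differ as maps, which we can arrange by choosing $u_\ell\ne v_\ell$ with distinct fixed points. The cost is that the generator becomes the conjugate $O_z^{-1}O_{u_\ell}^{-1}O_{v_\ell}O_z$. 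This is harmless, because conjugating a dense generating set by a fixed rotation still gives a dense generating set, provided the same $z$ is used for all $\ell$. If that argument breaks down, we would fall back on citing Farkas's construction directly.
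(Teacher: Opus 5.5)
Your first sentence is exactly what the paper does: it invokes \cite[Proposition~1.8]{Farkas16} and gives no argument of its own. Your reconstruction is therefore a genuinely different, self-contained route. It uses only Lemma~\ref{lem:block} and the prefix cancellation $O_{xu}^{-1}O_{xv}=O_u^{-1}O_v$, whereas the citation buys brevity and whatever generality Farkas's statement carries. Stage~1 is correct as written. Stage~2 also goes through, so you do not need the fallback, but three points must be repaired.

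First, both extensions realising a generator must hang off the same word. If you attach $u_\ell$ to $w_i$ and $v_\ell$ to $w_{i'}$ with $i\ne i'$, you get $O_{u_\ell}^{-1}O_{w_i}^{-1}O_{w_{i'}}O_{v_\ell}$, not $O_{u_\ell}^{-1}O_{v_\ell}$. Use one slot per pair instead: for $1\le\ell\le L$, replace $S_{w_\ell}$ by $S_{w_\ell u_\ell z}$ and $S_{w_\ell v_\ell z}$. The new rotation group then contains $O_z^{-1}O_{u_\ell}^{-1}O_{v_\ell}O_z$. Finiteness of the pairs is automatic, because $\mathcal I^b\times\mathcal I^b$ is finite and Lemma~\ref{lem:block} already gives density of the group they generate.

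Second, you cannot choose distinct fixed points at will, but you do not need them. Discard pairs with $O_{u_\ell}=O_{v_\ell}$, since they contribute nothing to the group. For the remaining pairs $r_{u_\ell}O_{u_\ell}\ne r_{v_\ell}O_{v_\ell}$, so $Z_\ell=\{x:S_{u_\ell}x=S_{v_\ell}x\}$ is a proper affine subspace or empty.

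Third, and this is the step you left open, a single $z$ must separate all pairs at once. This requires a point $p\in A\setminus\bigcup_\ell Z_\ell$, and it follows from \DRC{} as follows.
\begin{itemize}
\item Let $E$ be the affine hull of $A$. Then $S_iE=E$, so $O_i\vec E=\vec E$ for every $i$.
\item The stabiliser of $\vec E$ is a closed subgroup containing all $O_i$, hence it is all of $\SO(d)$. Transitivity on subspaces then forces $E=\R^d$, since $\diam A>0$.
\item If $A\subset\bigcup_\ell Z_\ell$, one of the finitely many closed sets $A\cap Z_\ell$ would contain a relatively open subset of $A$, hence a cylinder $S_wA$. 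That gives $A\subset S_w^{-1}Z_\ell$, a proper affine subspace, a contradiction.
\end{itemize}
Now take $z$ to be a long prefix of a coding of $p$. Then $S_zA$ lies in a small ball around $p$, and $S_{u_\ell z}A\cap S_{v_\ell z}A=\varnothing$ for all $\ell$.

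With these repairs the remaining checks go through. The word $z$ and the pairs $(u_\ell,v_\ell)$ depend only on the original IFS, not on $r$. Hence the Moran sum at exponent $s-\varepsilon$ is at least $(N-L)(r\,r_{\min})^{s-\varepsilon}\ge\tfrac12 r_{\min}^{s}r^{-3\varepsilon/4}\to\infty$, which gives $s_\varepsilon>s-\varepsilon$ for small $r$. \SSC{} follows from the Stage~1 separation and injectivity of $S_{w_\ell}$. \DRC{} holds because conjugation by $O_z$ is a homeomorphism of $\SO(d)$.
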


For this subsystem, let $\mu_\varepsilon$ be the self-similar
measure with weights $p_i=\lambda_i^{s_\varepsilon}$.
By \cite[Theorem~9.3]{Falconer14}, one can find a constant $C_\varepsilon$ such that
\[
 \mu_\varepsilon(B(x,r))\le C_\varepsilon r^{s_\varepsilon}
 \quad(x\in\R^d,\ 0<r\le1).
\]
Thus $I_t(\mu_\varepsilon)<\infty$ for every $0<t<s_\varepsilon$. Moreover, $\dimH \mu_\varepsilon=s_\varepsilon$. Since $s_\varepsilon>k$, these measures satisfy the finite-energy
hypothesis of Theorem~\ref{thm:energy}, by choosing
$k<t_0<\min(s_\varepsilon,d)$. The absolute-continuity and
density estimates therefore apply to them.

\subsection{Transferring to the original set}
We will use the standard slicing upper bound: if $K\subset\R^d$
is compact and $\dimH K>k$, then, for every fixed $V\in\Gr(d,k)$,
\begin{equation}\label{eq:slicing-upper}
 \dimH(K\cap\pi_V^{-1}\{y\})\le\dimH K-k
 \quad\text{for }\LL_V^k\text{-almost every }y\in V.
\end{equation}
This is a consequence of Mattila
\cite[Theorem~7.7]{Mattila95}, applied to $\pi_V$.
The exceptional set may be enlarged to a Borel null set.

\begin{proof}[Proof of Theorem~\ref{thm:sets}]
Write $s=\dimH A$. Fix $0<\varepsilon<s-k$, and choose
$A_\varepsilon$ as in Proposition~\ref{prop:subsystem}, with
the self-similar measure $\mu_\varepsilon$.
Then $s_\varepsilon=\dimH \mu_\varepsilon=\dimH A_\varepsilon>s-\varepsilon>k$ and
$I_{t_0}(\mu_\varepsilon)<\infty$ for some $k<t_0<d$. Theorem~\ref{thm:energy} implies
\begin{equation}\label{eq:subvolume}
 a_\varepsilon:=
 \inf_{W\in\Gr(d,k)}\LL_W^k(\pi_WA_\varepsilon)>0,
\end{equation}
and the fibre measures of exact dimension $s_\varepsilon-k$ for the projected self-similar measures almost surely, which by the equivalence, can be moved to the restricted Lebesgue measures on the projected planes. The slicing bound \eqref{eq:slicing-upper} implies the
right upper bound. Thus these fibres have dimension
$s_\varepsilon-k$ almost everywhere.
The inclusion $A_\varepsilon\subset A$ immediately proves
the uniform positive-volume claim for $A$.

Fix $V\in\Gr(d,k)$. For a word $w$ in the original IFS,
use the notation $V_w$ and $S_{V,w}$ from
Section~\ref{subsec:conditional-cylinders}.
For the empty word, we use $S_{\varnothing}=\mathrm{id}$,
$O_{\varnothing}=I$, and $r_{\varnothing}=p_{\varnothing}=1$.
The identity $\pi_V S_w=S_{V,w}\pi_{V_w}$ shows that $S_{V,w}$
maps $V_w$ onto $V$ with volume factor $r_w^k$.
For every plane $W$ in this countable family indexed by the set of finite words, choose a Borel set
$G_W\subset\pi_WA_\varepsilon$ of full restricted
Lebesgue measure on which the fibre dimension statement holds.
This choice is possible because the exceptional sets in
Section~\ref{sec:conditional} and in \eqref{eq:slicing-upper}
can be taken Borel, and $\pi_WA_\varepsilon$ is compact. In particular,
$\LL_W^k(G_W)=\LL_W^k(\pi_WA_\varepsilon)\ge a_\varepsilon$.
Set
\[
 G_{\varepsilon,V}=\bigcup_w S_{V,w}(G_{V_w})\subset\pi_VA.
\]
Each $S_{V,w}$ is a homeomorphism between the corresponding
planes, so it takes Borel sets to Borel sets. There are only
countably many finite words, and hence $G_{\varepsilon,V}$
is Borel.
For $y=S_{V,w}(z)$ in this set, the fibre of $A$ at $y$
contains
\[
 S_w(A_\varepsilon\cap\pi_{V_w}^{-1}\{z\}),
\]
which has dimension $s_\varepsilon-k>s-k-\varepsilon$.
To show that $G_{\varepsilon,V}$ has full Lebesgue measure in $\pi_VA$,
let $E=\pi_VA\setminus G_{\varepsilon,V}$. For every word $w$,
$S_{V,w}(G_{V_w})\subset(\pi_VS_wA)\setminus E$, and hence
\[
 \LL_V^k\bigl((\pi_VS_wA)\setminus E\bigr)
 \ge r_w^k\LL_{V_w}^k(G_{V_w})
 \ge a_\varepsilon r_w^k.
\]
Then we may use again Lemma~\ref{lem:density} to deduce that $\LL_V^k(E)=0$.

Now take $\varepsilon_j=(s-k)2^{-j-1}$, $j\ge1$,
choosing the subsystem separately for each $j$.
The Borel set
\[
 G_V^*=\bigcap_{j\ge1}G_{\varepsilon_j,V}
\]
has full Lebesgue measure in $\pi_VA$, since its complement
consisting a countable union of null sets. At every point of
$G_V^*$, the fibre has dimension greater than
$s-k-\varepsilon_j$ for every $j$, and consequently
at least $s-k$.
Intersect $G_V^*$ with the full-measure set supplied by
\eqref{eq:slicing-upper} for the original compact set $A$.
The reverse inequality holds on this intersection, proving
the claimed equality.
The set of base points where equality holds has positive
$k$-dimensional Lebesgue measure and hence dimension $k$. Thus the set dimension conservation \eqref{eq:dc-set} holds.
\end{proof}

\section{Prospects and open questions}\label{sec:prospects}

We finish with a few questions about possible extensions of the $L^1$ smoothing method.

\emph{The planar set theorem.}
The counterexample by Rapaport \cite[Theorem~1.1]{Rapaport17} shows that the $d = 2$ version of Theorem \ref{thm:energy} is false, but it leaves open the corresponding analogue of Theorem \ref{thm:sets} for sets. That is, could $\pi_V \mu$ for $V \in \Gr(2,1)$ be singular even if its support has
positive Lebesgue measure? Suppose that $A\subset\R^2$ is self-similar,
has dense rotations, and $\dimH A=s>1$. Is the Lebesgue measure of $\pi_V A$ positive for all $V \in \Gr(2,1)$ with fibres of dimension $s-1$ at
Lebesgue-almost every point? Our use of
Varj\'u's estimate relies on the semisimple structure of $\SO(d)$ for
$d\geq 3$ and does not apply to $\SO(2)$, so a planar argument would
have to be different.

\emph{Self-affine projections and dimension conservation.}
Consider an IFS $x\mapsto A_i x+a_i$ with invertible contracting
linear parts. Under what assumptions on the matrices $A_i$, together
with finite $t$-energy for some $t>k$, are all $k$-dimensional
projections absolutely continuous, with uniformly integrable
densities? One can also ask when the conditional measures have
dimension $\dimH\mu-k$ for $\mu_V$-almost every base point, and when
the fibres of the attractor have dimension $\dimH A-k$ at
Lebesgue-almost every point of each projection.

The $L^1$ scaling itself still behaves well under an invertible
linear map. If $T:\R^k\to\R^k$ is invertible and
\[
h_T(x)=|\det T|^{-1}h(T^{-1}x),
\]
then $\|h_T\|_1=\|h\|_1$. The difficulty is that a Gaussian is no
longer taken to a Gaussian of the same shape: its covariance changes,
and the projection plane also changes under products of the matrices.
The relevant walk on the Grassmannian therefore has in general a
stationary measure different from $\sigma_k$ on $\Gr(d,k)$. To use our argument similarly one would need an averaged smoothing estimate with respect
to this stationary measure, together with some control of the changing
covariances. The Fourier decay results
\cite{LiSahlsten20,Solomyak22,FengYi26} and the projection results
\cite{FengXie25,MorrisSert26,BaranyKaenmakiKolossvary26} may provide
natural settings for such a question.

\emph{Stationary measures and their projections.}
One can ask a similar question for more general stationary measures
$\nu=\sum_i p_i(g_i)_*\nu$ of smooth maps $g_i$, including projective
actions of random matrices. Li~\cite{Li22} proved Fourier decay for
Furstenberg measures of Zariski-dense random walks on split semisimple
groups under moment assumptions. For absolute continuity,
Kittle~\cite{Kittle25} gave a sufficient criterion for finitely
supported walks on $\mathrm{PSL}_2(\R)$, while
Lequen~\cite{Lequen25} constructed finitely supported walks on simple
Lie groups whose Furstenberg measures are smooth. In the opposite
direction, B\'enard~\cite{Benard26} recently proved the singularity conjecture for
$\mathrm{PSL}_2(\R)$: if the support of a finitely supported measure
generates a Zariski-dense discrete subgroup, then its Furstenberg
measure is singular. It would be interesting to know whether the $L^1$ smoothing method developed here can be adapted to stationary measures to prove absolute continuity of the measure or of its projections.

\section*{Acknowledgements}
We are especially grateful to Simon Baker for useful comments, critiques, suggestions, and conversations concerning this work and an earlier version.

\bibliographystyle{amsplain}
\bibliography{references}
\end{document}